\newtheorem{thm}{Theorem}[section]
\newtheorem{lem}[thm]{Lemma}
\newtheorem{cor}[thm]{Corollary}
\newtheorem{prop}[thm]{Proposition}
\theoremstyle{definition}
\newtheorem{dfn}[thm]{Definition}
\newtheorem{example}[thm]{Example}
\newcommand{\ca}{caterpillar}
\newcommand{\R}{\mathbb{R}}
\newcommand{\iU}{U^\infty}
\newcommand{\mz}{$0$-major~}
\newcommand{\mh}{$\frac12$-major~}
\newcommand{\ci}{\mathrm{CI}}
\newcommand{\fg}{\mathrm{FG}}
\newcommand{\car}{\mathrm{CA}}
\newcommand{\ic}{\mathrm{IC}}
\newcommand{\cri}{\mathrm{Cr}}
\newcommand{\orb}{\mathrm{orb}}
\newcommand{\pc}{\mathrm{PC}}
\newcommand{\ro}{\mathrm{Rot_{\frac12}}}
\newcommand{\rot}{\mathrm{Rot_\tau}}
\newcommand{\qml}{\mathrm{QML}}
\newcommand{\cut}{\mathrm{CUT}}
\newcommand{\ce}{\mathrm{COR}}
\newcommand{\smp}{\mathcal{S}}
\newcommand{\F}{\mathcal{F}}
\newcommand{\bj}{\bar j}
\newcommand{\bq}{\bar q}
\newcommand{\di}{\ol{\mathrm{Di}}}
\newcommand{\ol}{\overline}
\newcommand{\0}{\varnothing}
\newcommand{\sm}{\setminus}
\newcommand{\bd}{\mathrm{Bd}}
\newcommand{\ch}{\mathrm{Ch}}
\newcommand{\ph}{\varphi}
\newcommand{\ga}{\gamma}
\newcommand{\si}{\sigma}
\newcommand{\ta}{\theta}
\newcommand{\om}{\omega}
\newcommand{\la}{\lambda}
\newcommand{\nin}{\not\in}
\newcommand{\hell}{\hat{\ell}}
\newcommand{\C}{\mbox{$\mathbb{C}$}}
\newcommand{\hc}{\mbox{$\mathbb{\widehat{C}}$}}
\newcommand{\D}{\mbox{$\mathbb{D}$}}
\newcommand{\bbd}{\mbox{$\mathbb{D}$}}
\newcommand{\disk}{\mathbb{D}}
\newcommand{\ucirc}{\mathbb{S}^1}
\newcommand{\uc}{\mathbb{S}^1}
\newcommand{\tha}{\text{Th}}
\newcommand{\lam}{\mathcal{L}}
\newcommand{\M}{\mathcal{M}}
\def\crA{{\rm CrA}}
\begin{document}

\date{June 24, 2011}
\title[Topological polynomials with a simple core]
{Topological polynomials with a simple core}

\author[A.~Blokh]{Alexander~Blokh}

\thanks{The first named author was partially
supported by NSF grant DMS--0901038}

\author[L.~Oversteegen]{Lex Oversteegen}

\thanks{The second named author was partially  supported
by NSF grant DMS-0906316}

\author[R.~Ptacek]{Ross~Ptacek}

\author[V.~Timorin]{Vladlen~Timorin}

\thanks{The fourth named author was partially supported by
the Deligne fellowship, the Simons-IUM fellowship, RFBR grants 10-01-00739-a, 11-01-00654-a,
MESRF grant MK-2790.2011.1,
RFBR/CNRS project 10-01-93115-NCNIL-a, and
AG Laboratory NRU-HSE, MESRF grant ag. 11 11.G34.31.0023}

\address[Alexander~Blokh, Lex~Oversteegen and Ross~Ptacek]
{Department of Mathematics\\ University of Alabama at Birmingham\\
Birmingham, AL 35294-1170}

\address[Vladlen~Timorin]
{Faculty of Mathematics\\
Laboratory of Algebraic Geometry and its Applications\\
Higher School of Economics\\
Vavilova St. 7, 112312 Moscow, Russia
}

\address[Vladlen~Timorin]
{Independent University of Moscow\\
Bolshoy Vlasyevskiy Pereulok 11, 119002 Moscow, Russia}

\email[Alexander~Blokh]{ablokh@math.uab.edu}
\email[Lex~Oversteegen]{overstee@math.uab.edu}
\email[Ross~Ptacek]{rptacek@uab.edu}
\email[Vladlen~Timorin]{vtimorin@hse.ru}

\subjclass[2010]{Primary 37F20; Secondary 37C25, 37F10, 37F50}

\keywords{Complex dynamics; Julia set; Mandelbrot set}


\begin{abstract}
We define the (dynamical) core of a topological polynomial (and the
associated lamination). This notion extends that of the core of a
unimodal interval map. Two explicit descriptions of the core are
given: one related to periodic objects and one related to critical
objects. We describe all laminations associated with quadratic and
cubic topological polynomials with a simple core (in the quadratic case,
these correspond precisely to points on the Main Cardioid of the
Mandelbrot set).
\end{abstract}

\maketitle

\section{Introduction and the main results}\label{s:intro}


The \emph{complex quadratic family} is the family of all polynomials
$P_c(z)=z^2+c$ (any quadratic polynomial is M\"obius-conjugate to
some $P_c(z)$). A central role in studying this family is played by the
\emph{connectedness locus $\M_2$} (also called the \emph{Mandelbrot
set}) consisting of all $c$ such that the corresponding \emph{Julia
set} $J(P_c)$ is connected.

The core of $\M_2$ is the Principal Hyperbolic Domain ${\rm PHD}_2$
in the parameter space, i.e. the set of parameters whose polynomials
have an attracting fixed point. Its boundary $\car_2$ is called the
\emph{Main Cardioid (of the Mandelbrot set)}. A combinatorial model
for the boundary of $\M_2$, due to Thurston \cite{thu85}, implies  a
combinatorial model of $\car_2$. As it is known that the
combinatorial model of $\car_2$ is homeomorphic to $\car_2$ (and is
homeomorphic to the circle, see e.g. \cite{cg}), we denote the
combinatorial model $\car_2$ as well. Our work is motivated by the
desire to provide a topologically dynamical point of view on these
objects and use it to extend them to the cubic case.

A compactum $Q\subset \C$ is called \emph{unshielded} if it coincides with
the boundary of the unbounded component of $\C\sm Q$. A principal
example of an unshielded compactum is the Julia set of a complex
polynomial map. Unshielded continua have been studied before (see,
e.g., \cite{bo04}), however the notion is useful also when applied to
compacta. A related object is the \emph{topological hull} of a
compactum $Y$, which is defined as the complement to the unbounded component of
$\C\sm Y$ and denoted by $\tha(Y)$ (notice that $\tha(Y)$ is defined
even if $Y$ is not unshielded).

A continuum $X$ in the plane is \emph{tree-like} if $X=\tha(X)$ and $X$ has no interior
points; 
any such continuum is
unshielded. In this case, for every closed set $Y\subset X$, there is a
unique smallest continuum $Z\subset X$ containing $Y$, coinciding
with the intersection of all continua in $X$ containing $Y$. However,
in general, this property may fail: if $X$ is an unshielded continuum
and $Y\subset X$ a closed subset, then it is not necessarily the case
that there exists a unique smallest subcontinuum in $X$ containing $Y$. An
obvious example is a circle $X$ containing a two-point set $Y$.

This motivates the following definition: a subcontinuum $B\subset X$ of an unshielded continuum $X$ is
said to be \emph{complete} if for every bounded component $U$ of $\C\sm
X$ either $B\cap \bd(U)$ is at most a single point or $\bd(U)\subset
B$. For a closed set $Y\subset X$, the smallest \emph{complete} continuum $Z$ with $Y\subset
Z\subset X$ always exists: indeed, two complete continua $Z_1,
Z_2\subset X$ in an unshielded continuum $X$ always have a connected intersection (this follows from
the Mayer--Vietoris exact sequence applied to the unbounded components
of $\C\setminus Z_i$) which implies the claimed.

Let $Q\subset\C$ be an unshielded continuum. An {\em atom} of $Q$ is
either a single point of $Q$ or the boundary of a bounded component
of $\C\sm Q$. A point $x\in Q$ is said to be a \emph{cutpoint} of
$Q$ if removing $x$ from the set $Q$ disconnects it, and an
\emph{endpoint} of $Q$ otherwise. More generally, an atom of $Q$ is
a \emph{cut-atom} of $Q$, if removing it from $Q$ disconnects $Q$.

If there is a continuous self-mapping $f:Q\to Q$, then it is natural to
study the dynamics of cutpoints and cut-atoms of $Q$, in particular the
asymptotic behavior of their orbits, e.g. because cutpoints play a role
in constructing geometric Markov partitions. Even though this study can
be conducted in the general setting outlined above, we now concentrate
on the case of (topological) polynomials.

We want to compare the dynamics of different polynomials with
connected Julia sets. To do so we relate their dynamics to that at
infinity; this is done by means of \emph{laminations} \cite{thu85}.
Laminations are also used to describe a model for the boundary of
$\M_2$, the quotient space $\uc/\qml$ of the unit circle
$\uc\subset\C$ under the so-called \emph{Quadratic Minor Lamination}
$\qml$, a specific equivalence relation on $\uc$. If one takes the
quotient of the unit disk in the plane by collapsing convex hulls of
all classes of $\qml$ to points while not identifying other points,
one gets a model of the entire $\M_2$ (conjecturally, this model is
homeomorphic to $\M_2$). We define laminations below, however our
approach is somewhat different from \cite{thu85} (cf.
\cite{blolev02a}).

\begin{dfn}\label{d:lam}

An equivalence relation $\sim$ on the unit circle $\uc$ is called a
\emph{lamination} if it has the following properties:

\noindent (E1) the graph of $\sim$ is a closed subset in $\uc \times
\uc$;

\noindent (E2) if $t_1\sim t_2\in \uc$ and $t_3\sim t_4\in \uc$, but
$t_2\not \sim t_3$, then the open straight line segments in $\C$ with
endpoints $t_1, t_2$ and $t_3, t_4$ are disjoint;

\noindent (E3) each equivalence class of $\sim$ is totally
disconnected.
\end{dfn}

Consider the map $\si_d:\uc\to\uc$ defined by the formula $\si_d(z)=z^d$.

\begin{dfn}\label{d:si-inv-lam}
A lamination $\sim$ is called ($\si_d$-){\em invariant} if:

\noindent (D1) $\sim$ is {\em forward invariant}: for a class $g$,
the set $\si_d(g)$ is a class too;

\noindent (D2) $\sim$ is {\em backward invariant}: for a class $g$,
its preimage $\si_d^{-1}(g)=\{x\in \uc: \si_d(x)\in g\}$ splits
into at most $d$ classes;

\noindent (D3) for any $\sim$-class $g$, the map $\si_d: g\to
\si_d(g)$ extends to $\uc$ as an orientation preserving covering map
such that $g$ is the full preimage of $\si_d(g)$ under this covering map.

\noindent (D4) all $\sim$-classes are finite.

\end{dfn}

Part (D3) of Definition~\ref{d:lam} has an equivalent version. A
{\em (positively oriented) hole $(a, b)$ of a compactum $Q\subset \uc$}
is a component of $\uc\sm Q$ such that moving from $a$ to $b$
inside $(a, b)$ is in the positive direction. Then (D3) is equivalent
to the fact that for a $\sim$-class $g$ either $\si_d(g)$ is a point or
for each positively oriented hole $(a, b)$ of $g$ the positively
oriented arc $(\si_d(a), \si_d(b))$ is a positively oriented hole of
$\si_d(g)$.

For a $\si_d$-invariant lamination $\sim$ we consider the
\emph{topological Julia set} $\uc/\sim=J_\sim$ and the
\emph{topological polynomial} $f_\sim:J_\sim\to J_\sim$ induced by
$\si_d$. The quotient map $p_\sim:\uc\to J_\sim$ extends to the
plane with the only non-trivial fibers being the convex hulls of
$\sim$-classes. Using Moore's Theorem
one can extend $f_\sim$ to a branched-covering map $f_\sim:\C\to \C$ of
the same degree. The
complement of the unbounded component of $\C\sm J_\sim$ is called
the \emph{filled-in topological Julia set} and is denoted $K_\sim$.
If the lamination $\sim$ is fixed, we may omit $\sim$ from the
notation. For points $a$, $b\in\uc$, let $\ol{ab}$ be the {\em
chord} with endpoints $a$ and $b$ (if $a=b$, set $\ol{ab}=\{a\}$).
For $A\subset\uc$ let $\ch(A)$ be the \emph{convex hull} of $A$ in $\C$.

\begin{dfn}\label{d:lea}
If $A$ is a $\sim$-class, call an edge $\ol{ab}$ of $\bd(\ch(A))$ a
\emph{leaf} (if $a=b$, call the leaf $\ol{aa}=\{a\}$
\emph{degenerate}, cf.\ \cite{thu85}). Also, all points of $\uc$ are called \emph{leaves}. Normally, leaves are denoted
as above, or by a letter with a bar above it ($\bar b, \bq$ etc), or by $\ell$.
The family of all leaves of
$\sim$, denoted by $\lam_\sim$, is called the \emph{geometric
lamination generated by $\sim$}. Denote the union of all leaves of
$\lam_\sim$ by $\lam^+_\sim$. Extend $\si_d$ (keeping the notation)
linearly over all \emph{individual chords} in $\ol{\disk}$, in
particularly over leaves of $\lam_\sim$. Note, that even though the
extended $\si_d$ is not well defined on the entire disk, it is well
defined on $\lam^+$ (and every individual chord in the disk). The
closure of a non-empty component of $\disk\sm \lam^+_\sim$ is called
a \emph{gap} of $\sim$. If $G$ is a gap, we talk about \emph{edges
of $G$}; if $G$ is a gap or leaf, we call the set $G'=\uc\cap G$ the
\emph{basis of $G$}.
\end{dfn}

A gap or leaf $U$ is said to be \emph{preperiodic} if for some
minimal $m>0$ the set $\si_d^m(U')$ is periodic (i.e.,
$\si_d^{m+k}(U')=U'$ for some $k\ge 0$),  and $U', \dots,
\si_d^{m-1}(U')$ are not periodic. Then the number $m$ is called the
\emph{preperiod} of $U$. If $U$ is either periodic or preperiodic,
we will call it \emph{(pre)periodic}.  A leaf $\ell$ is said to be
critical if $\si(\ell)$ is a point. A gap $U$ is said to be
\emph{critical} if $\si|_{\bd(U)}$ is not one-to-one (equivalently,
if $\si_d|_{U'}$ is not one-to-one).
We define \emph{precritical} and \emph{(pre)critical} objects
similarly to the above.

Infinite gaps of a $\si_d$-invariant lamination $\sim$ are called
\emph{Fatou gaps}. Let $G$ be a Fatou gap; by \cite{kiwi02} $G$ is
(pre)periodic under $\si_d$. If a Fatou gap $G$ is periodic, then by
\cite{blolev02a} its basis $G'$ is a Cantor set and the quotient
projection $\psi_G:\bd(G)\to\uc$, which collapses all edges of $G$
to points, is such that $\psi_G$-preimages of points are points or
single leaves.

\begin{dfn}\label{d:degreek}
Suppose that $G$ is a periodic Fatou gap of minimal period $n$.
By \cite{blolev02a} $\psi_G$ semiconjugates $\si^n_d|_{\bd(G)}$ to a map
$\hat\si_G=\hat\si:\uc\to \uc$ so that either {\rm(1)} $\hat\si=\si_k:\uc\to \uc,
k\ge 2$ or {\rm(2)} $\hat\si$ is an irrational rotation. In case (1)
call $G$ a \emph{gap of degree $k$}; if $k=2$, then $G$ is said to be
\emph{quadratic}, and if $k=3$, then $G$ is said to be \emph{cubic}. In
case (2) $G$ is called a \emph{Siegel gap}. A (pre)periodic gap
eventually mapped to a periodic gap of degree $k$ (quadratic, cubic,
Siegel) is also said to be of degree $k$ (quadratic, cubic, Siegel).
\emph{Domains} (bounded components of the complement) of $J_\sim$ are said to
be \emph{of degree $k$, quadratic, cubic and Siegel} if the
corresponding gaps of $\lam_\sim$ are such.
\end{dfn}

Various types of gaps and domains described in
Definition~\ref{d:degreek} correspond to various types of atoms of
$J_\sim$; as with gaps and domains, we keep the same terminology
while replacing the word ``gap'' or ``domain'' by the word ``atom''.
Thus, the boundary of a (periodic) Siegel domain is called a
\emph{(periodic) Siegel atom}, the boundary of a (periodic) Fatou
domain of degree $k>1$ is called a \emph{(periodic) Fatou atom} etc.

An atom $A$ of $J_\sim$ is said to be a \emph{persistent cut-atom}
if all its iterated $f_\sim$-images are cut-atoms. A \emph{persistent cut-atom of
degree $1$} is either a non-(pre)periodic persistent cut-atom, or a
(pre)periodic cut-atom of degree $1$;  (pre)periodic atoms of degree
$1$ are either (pre)periodic points or Siegel gaps (recall, that all
Siegel gaps are (pre)periodic). A \emph{persistent cut-atom of
degree $k>1$} is a Fatou atom of degree $k$.
Let $\ic_{f_\sim}(A)$ (or $\ic(A)$ if $\sim$ is fixed) be the smallest complete \emph{\textbf{i}nvariant \textbf{c}ontinuum in
$J_\sim$} containing a set $A\subset J_\sim$; we call $\ic(A)$ the \emph{dynamical span} of $A$.
A recurring theme in our paper is the fact that in some cases the dynamical span of a certain
set $A$ and the dynamical span of the subset $B\subset A$ consisting of all periodic
elements of $A$ with some extra-properties (e.g., being a periodic cut-atom, a periodic cut-atom of
degree 1 etc) coincide.
Also, call a periodic atom $A$ of period $n$
and degree $1$ \emph{rotational} if $\si^n_d|_{p^{-1}(A)}$ has
\textbf{non-zero rotation number}.
Finally, the \emph{$\om$-limit set} $\om(Z)$ of a set (e.g., a singleton)
$Z\subset J_\sim$ is defined as
$$
\om(Z)=\bigcap_{n=1}^\infty\ol{\bigcup_{i=n}^\infty f_\sim^i(Z)}.
$$

\begin{dfn}\label{dynacore}
The \emph{(dynamical) core} $\ce(f_\sim)$ of $f_\sim$ is the
dynamical span of the union of the $\om$-limit
sets of all persistent cut-atoms. The union of all periodic
cut-atoms of $f_\sim$ is denoted by $\pc(f_\sim)=\pc$ and is called
the \emph{periodic core} of $f_\sim$.

The \emph{(dynamical) core of degree $1$} $\ce_1(f_\sim)$ of
$f_\sim$ is the dynamical span of the
$\om$-limit sets of all persistent cut-atoms of degree $1$. The
union of all periodic cut-atoms of $f_\sim$ of degree $1$ is denoted
by $\pc_1(f_\sim)=\pc_1$ and is called the \emph{periodic core of
degree $1$} of $f_\sim$.

The \emph{(dynamical) rotational core} $\ce_{rot}(f_\sim)$ of
$f_\sim$ is the dynamical span of the $\om$-limit sets of all
wandering persistent cutpoints and all periodic rotational atoms.
The union of all periodic rotational
atoms of $f_\sim$ is denoted by
$\pc_{rot}(f_\sim)=\pc_{rot}$ and is called the \emph{periodic rotational core}
of $f_\sim$.
\end{dfn}

Clearly, $\ce_{rot}\subset \ce_1\subset \ce$ and $\pc_{rot}\subset \pc_1\subset \pc$. The notion of a
core (and even the terminology) is motivated by unimodal dynamics.
Take a unimodal map $f:[0, 1]\to [0, 1]$ such that $f(0)=f(1)=0$ is
a \emph{topologically repelling} fixed point (i.e., all points in a
small neighborhood of $0$ are mapped farther away from $0$) and
denote the unique extremum of $f$ by $c$; to avoid trivialities let
$0<f^2(c)<c<f(c)<1$ and let there be no fixed points in $(0,
f^2(c))$. Then all cutpoints of $[0, 1]$ (i.e., all points of $(0,
1)$) are eventually mapped to the forward invariant interval
$I_f=[f^2(c), f(c)]$, called the \emph{(dynamical) core} of $f$.
Clearly, this is related to Definition~\ref{dynacore}. The so-called
\emph{growing trees} \cite{blolev02a} are also related to the notion
of the dynamical core.

One of the aims of our paper is to illustrate the analogy between
the dynamics of topological polynomials on their \emph{cutpoints}
and \emph{cut-atoms} and interval dynamics. E.g., it is known, that
for interval maps periodic points and critical points  play a
significant, if not decisive, role. Similar results can be obtained
for topological polynomials too. Observe, that in the case of
dendrites the notions become simpler and some results can be
strengthened. Indeed, first of all in that case we can talk about
cutpoints only. Second, in this case $\ce=\ce_1$ and $\pc=\pc_1$. A
priori, in that case $\ce_{rot}$ could be strictly smaller than
$\ce$, however Theorem~\ref{t:criticoreintr} shows that in this case
$\ce_{rot}=\ce$.


\begin{thm}\label{t:criticoreintr}
The dynamical core of $f_\sim$ coincides with $\ic(\pc(f_\sim))$.
The dynamical core of degree $1$ of $f_\sim$ coincides with
$\ic(\pc_1(f_\sim))$. The rotational dynamical core coincides with
$\ic(\pc_{rot}(f_\sim))$. If $J_\sim$ is a dendrite, then
$\ce=\ic(\pc_{rot}(f_\sim))$.
\end{thm}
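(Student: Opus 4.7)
The plan is to prove the four conclusions in order, reducing each to the case already handled. For the first statement, the containment $\ic(\pc(f_\sim))\subseteq\ce(f_\sim)$ is immediate: every periodic cut-atom $A$ of period $n$ is itself a persistent cut-atom (its orbit is a finite cycle of cut-atoms), and $A\in\om(A)$; consequently $\pc(f_\sim)\subseteq\bigcup_Z\om(Z)$ over persistent cut-atoms $Z$, so applying the monotone operator $\ic$ gives the inclusion. The same verbatim argument yields $\ic(\pc_1)\subseteq\ce_1$ and (after checking that periodic rotational atoms are persistent cut-atoms of the correct type) $\ic(\pc_{rot})\subseteq\ce_{rot}$.

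The hard step is the reverse inclusion $\ce(f_\sim)\subseteq\ic(\pc(f_\sim))$. Since $\ic(\pc)$ is closed, complete and forward invariant, it suffices to show that for every persistent cut-atom $Z$ one has $\om(Z)\subseteq\ic(\pc)$. I would argue this by showing that every minimal subsystem $M\subseteq\om(Z)$ consists of periodic cut-atoms, so that $M\subseteq\pc$ and hence $\om(Z)\subseteq\ic(\pc)$ by invariance. To get the periodicity of $M$, I would lift to the circle: the preimages $p_\sim^{-1}(f_\sim^k(Z))$ are $\sim$-classes supporting a nontrivial ``cut'' combinatorial type (at least two $p_\sim$-images of arcs between consecutive vertices lie in different components of $J_\sim\setminus f_\sim^k(Z)$). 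This type is preserved in Hausdorff limits, so every $\sim$-class arising in $p_\sim^{-1}(M)$ is itself a cut-class. Expansivity of $\si_d$ on $\uc$ together with the finite-class condition (D4) then forces such a minimal collection of cut-classes to be a periodic cycle; passing back through $p_\sim$ gives $M\subseteq\pc$.

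For the statements about $\ce_1$ and $\ce_{rot}$ I would use the same template, adding one structural observation in each case. For $\ce_1\subseteq\ic(\pc_1)$ I rule out Fatou atoms of degree $>1$ appearing in $\om(Z)$ when $Z$ is a persistent cut-atom of degree $1$: degree$>1$ Fatou atoms are (pre)periodic and isolated in the atom topology, so a persistent cut-atom of degree $1$ never approaches one without eventually landing on it, contradicting the definition. For $\ce_{rot}\subseteq\ic(\pc_{rot})$ I would verify that any non-rotational periodic cut-atom $A$ appearing as a limit of the forward orbit of a wandering persistent cutpoint is already in $\ic(\pc_{rot})$: the $\si^n_d$-fixed vertices of $p_\sim^{-1}(A)$ separate the basis of $A$ into arcs each of which is pinched off by a rotational periodic object produced from the $\si_d$-dynamics on $\uc$ using Definition~\ref{d:si-inv-lam}(D3), so $A$ lies in the complete invariant continuum generated by those rotational atoms.

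Finally for the dendrite statement: since $J_\sim$ has no Fatou or Siegel atoms, every atom is a point, every cut-atom is a cutpoint, and in particular $\ce=\ce_1$. Combining this with the third part already proved, it remains to upgrade $\ic(\pc_{rot})\subseteq\ce$ (immediate) to an equality, i.e. to show that a non-rotational periodic cutpoint $x$ lies in $\ic(\pc_{rot})$. I would induct on the size of the $\sim$-class $C=p_\sim^{-1}(x)$: in the dendrite setting a non-rotational periodic class has a $\si^n_d$-fixed vertex, which decomposes $\uc\sm C$ into proper subarcs strictly nearer to periodic rotational classes; applying the induction hypothesis and completeness of $\ic(\pc_{rot})$ delivers $x$. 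I expect the main obstacle to be the minimal-set classification in the second paragraph, as it is exactly where the topological dynamics on atoms interacts most delicately with the combinatorics of $\sim$; the dendrite case is simpler because no infinite gaps intervene, but still requires the inductive reduction to rotational classes.
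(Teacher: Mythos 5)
The easy inclusions in your first paragraph are fine and match what the paper does (one line at the start of its proof). The trouble is the reverse inclusion, which you argue via a minimal-set decomposition that does not hold.

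Specifically, in your second paragraph you assert that ``expansivity of $\si_d$ on $\uc$ together with the finite-class condition (D4) forces such a minimal collection of cut-classes to be a periodic cycle.'' That is false: $\si_d$ on $\uc$ is conjugate to the full shift on $d$ symbols (away from a countable set), which supports an abundance of infinite aperiodic minimal sets (Sturmian, Toeplitz, substitutive, \dots). Passing to the induced map on $\sim$-classes and using (D4) does not help, because a sequence of cut-classes can converge (Hausdorff) to a cut-class without any periodicity appearing, and the minimal set $M\subseteq\om(Z)$ is in general far from a single periodic orbit. So the central reduction ``$M\subseteq\pc$'' cannot be obtained this way, and the whole reverse-inclusion argument collapses. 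The paper avoids this: rather than trying to show $\om(Z)$ consists of periodic cut-atoms, it proves (Theorem~\ref{t:corexpli}, built on the super-gap machinery $\lam^c_\sim$, on settled components, on Lemma~\ref{l:no1side}, and on the fixed-point Theorem~\ref{t:fxpt}) that some iterate $f^n(Z)$ \emph{separates two elements of} $\pc_{rot}\sm p(SG)$, hence lies inside the complete invariant continuum $\ic(\pc_{rot}\sm p(SG))$. Since that continuum is forward invariant and closed, $\om(Z)$ lands inside it too, with no claim that $\om(Z)$ itself is made of periodic objects.

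Your third and fourth paragraphs inherit the same problem and add unsupported assertions. The statement that degree-$>1$ Fatou atoms are ``isolated in the atom topology,'' so a degree-$1$ persistent cut-atom cannot have them in its $\om$-limit set without landing on them, is not established and is not obviously true; the paper instead handles degree information by showing that the eventual separating points can be taken to be \emph{rotational} periodic cutpoints outside $p(SG)$, which automatically have degree $1$. Your induction for the dendrite case on the size of $p_\sim^{-1}(x)$ has no base case or decreasing quantity spelled out, and the ``pinched off by a rotational periodic object'' step is exactly the content of the fixed-point Theorem~\ref{t:fxpt}, which you would need to invoke explicitly. In short, the blind proposal takes a genuinely different (minimal-set) route, but its key dynamical claim is false, so the argument does not go through.
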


There are two other main results in Section~\ref{s:dyco}. As the
dynamics on Fatou gaps is simple, it is natural to consider the
dynamics of gaps/leaves which never map to Fatou gaps, or the
dynamics of gaps/leaves which never map to
`maximal concatenations'' of Fatou gaps which we call
\emph{super-gaps} (these notions are made precise in
Section~\ref{s:dyco}). We prove that the dynamical span of limit
sets of all persistent cut-atoms which never map to the $p_\sim$-images of
super-gaps coincides with the dynamical span of all periodic
rotational cut-atoms located outside the $p_\sim$-images of super-gaps. In
fact, the ``dendritic'' part of Theorem~\ref{t:criticoreintr}
follows from that result.

A result similar to Theorem~\ref{t:criticoreintr}, using critical
points and atoms instead of periodic ones, is proven in Theorem~\ref{t:critcore}. Namely, an atom $A$ is \emph{critical} if either $A$ is a critical point of
$f_\sim$, or $f_\sim|_A$ is not one-to-one.
In Theorem~\ref{t:critcore} we prove, in particular, that
various cores of a topological polynomial $f_\sim$ coincide with the dynamical span
of critical atoms of the restriction of $f_\sim$ onto these cores.

If $J_\sim$ is a dendrite, we can replace critical atoms in $\ce(f_\sim)$
by critical points; this is closely related to the interval, even unimodal, case.
Thus, Theorem~\ref{t:criticoreintr} can be viewed as a generalization of the
corresponding results for maps of the interval: if $g$ is a
piecewise-monotone interval map then the closure of the union of the
limit sets of all its points coincides with the closure of the union of
its periodic points (see, e.g., \cite{blo95}, where this is deduced
from similar results which hold for all continuous interval maps, and
references therein).

Theorem~\ref{t:criticoreintr} shows the importance of the periodic cores of $f_\sim$.
In the second half of the paper we concentrate on the cubic case and
describe cubic topological polynomials such that their laminations
have only isolated leaves (such laminations have a dynamical
rotational core consisting of at most one atom which, by
Theorem~\ref{t:criticoreintr}, is equivalent to having periodic
rotational core of no more than one atom).
In fact, it is easy to show that the quadratic laminations with at most one periodic rotational
atom are exactly those with only isolated leaves, and furthermore, such laminations form
the Main Cardioid $\car_2$.
The corresponding description in the cubic case is more involved,
see Theorem \ref{t:cormin-spec}.

\subsubsection*{Organization of the paper}
In Section 2, we review the terminology of geometric laminations, due
to Thurston, and discuss some general properties of invariant
laminations.
Section 3 contains the explicit descriptions of the dynamical core,
in particular, the proof of Theorem \ref{t:criticoreintr}.
In Section 4, we classify invariant quadratic gaps of cubic laminations.
With every such gap, we associate its so-called \emph{canonical lamination}.
Section 5 proceeds with a similar study of invariant rotational gaps.
Finally, in Section 6, we describe all cubic invariant laminations with
only isolated leaves.

\subsubsection*{Acknowledgements}
During the work on this project, the fourth author has visited Max
Planck Institute for Mathematics (MPIM), Bonn. He is very grateful to
MPIM for inspiring working conditions.

\section{Preliminaries}\label{s:prel}

In this section, we go over well-known facts concerning laminations. We
also quote some topological results used in what follows.

Let $\bbd$ be the open unit disk and $\hc$ be the complex sphere.
For a compactum $X\subset\C$, let $\iU(X)$ be the unbounded
component of $\C\sm X$. The topological hull of $X$ equals
$\tha(X)=\C\sm \iU(X)$ (often we use $\iU(X)$ for $\hc\sm\tha(X)$,
including the point at $\infty$). If $X$ is a continuum, then
$\tha(X)$ is a \emph{non-separating} continuum, and there exists a
Riemann map $\Psi_X:\hc\sm\ol\bbd\to \iU(X)$; we always normalize it
so that $\Psi_X(\infty)=\infty$ and $\Psi'_X(z)$ tends to a positive
real limit as $z\to\infty$.

Consider a polynomial $P$ of degree $d\ge 2$ with Julia set $J_P$
and filled-in Julia set $K_P=\tha(J_P)$. Clearly, $J_P$ is
unshielded. Extend $z^d: \C\to \C$ to a map $\ta_d$ on $\hc$. If $J_P$
is connected then $\Psi_{K_P}=\Psi:\C\sm\ol\bbd\to \iU(K_P)$ is such
that $\Psi\circ \ta_d=P\circ \Psi$ on the complement to the closed unit
disk \cite{hubbdoua85,miln00}. If $J_P$ is locally connected, then
$\Psi$ extends to a continuous function $\ol{\Psi}: {\hc\sm\bbd}\to
\ol{\hc\setminus K_P}$, and $\ol{\Psi} \circ\, \ta_d=P\circ\ol{\Psi}$
on the complement of the open unit disk; thus, we obtain a continuous
surjection $\ol\Psi\colon\bd(\bbd)\to J_P$ (the \emph{Carath\'eodory loop}).
Identify $\uc=\bd(\bbd)$ with $\mathbb{R}/\mathbb{Z}$.

Let $J_P$ be locally connected, and set $\psi=\ol{\Psi}|_{\uc}$.
Define an equivalence relation $\sim_P$ on
$\uc$ by $x \sim_P y$ if and only if $\psi(x)=\psi(y)$, and call it the
($\si_d$-invariant) {\em lamination of $P$}.
Equivalence classes of $\sim_P$ are pairwise \emph{unlinked}:
their Euclidian convex hulls are disjoint.
The topological Julia set $\uc/\sim_P=J_{\sim_P}$ is homeomorphic to $J_P$,
and the topological polynomial $f_{\sim_P}:J_{\sim_P}\to J_{\sim_P}$ is topologically conjugate to $P|_{J_P}$.
One can extend the conjugacy between
$P|_{J_{P}}$ and $f_{\sim_P}:J_{\sim_P}\to J_{\sim_P}$ to a
conjugacy on the entire plane.

An important
particular case is that when $J_\sim$ is a \emph{dendrite} (a locally
connected continuum containing no simple closed curve) and so
$\hc\sm J_\sim$ is a simply connected neighborhood of infinity. It is easy to
see that if a lamination $\sim$ has no domains (in other words, if
convex hulls of all $\sim$-classes partition the entire unit disk),
then the quotient space $\uc/\sim$ is a dendrite.

\subsection{Geometric laminations}\label{ss:geol}

The connection between laminations, understood as equivalence relations, and the
original approach of Thur\-ston's \cite{thu85}, can be explained once we
introduce a few key notions. Assume that a $\si_d$-invariant lamination
$\sim$ is given.

Thurston's idea was to study similar collections of chords in $\disk$
abstractly, i.e., without assuming that they are generated by an
equivalence relation on the circle with specific properties.

\begin{dfn}[\rm{cf.} \cite{thu85}]\label{geolam}
A \emph{geometric prelamination} $\lam$ is a set of (possibly degenerate) chords
in $\ol{\disk}$ such that any two distinct chords from $\lam$ meet at
most in an endpoint of both of them; $\lam$ is called a \emph{geometric
lamination} (\emph{geo-lamination}) if all points of $\uc$ belong to
elements of $\lam$, and $\bigcup \lam$ is closed.
Elements of $\lam$ are called \emph{leaves} of $\lam$
(thus, some leaves may be degenerate).
The union of all leaves of $\lam$ is denoted by $\lam^+$.
\end{dfn}

\begin{dfn}\label{d:lamgap}
Suppose that $\lam$ is a geo-lamination. The closure of a non-empty
component of $\disk\sm \lam^+$ is called a \emph{gap} of $\lam$.
Thus, given a geo-lamination $\lam$ we obtain a cover of
$\ol{\disk}$ by gaps of $\lam$ and (perhaps, degenerate) leaves of
$\lam$ which do not lie on the boundary of a gap of $\lam$
(equivalently, are not isolated in $\disk$ from either side).
Elements of this cover are called {\em $\lam$-sets}.
Observe that the intersection of two different $\lam$-sets is at most a leaf.
\end{dfn}

In the case when $\lam=\lam_\sim$ is generated by an invariant
lamination $\sim$, gaps might be of two kinds: finite gaps which are convex
hulls of $\sim$-classes and infinite gaps which are closures of
\emph{domains} (of $\sim$), i.e. components of $\disk\sm \lam^+_\sim$
which are not interiors of convex hulls of $\sim$-classes.

\begin{dfn}[\rm{cf.} \cite{thu85}]\label{geolaminv}
A geometric (pre)lamination $\lam$ is said to be an \emph{invariant}
geo-lamination of degree $d$ if the following conditions are
satisfied:

\begin{enumerate}

\item (Leaf invariance) For each leaf $\ell\in \lam$, the set
    $\si_d(\ell)$ is a (perhaps degenerate) leaf in $\lam$. For every
     non-degenerate leaf $\ell\in\lam$, there are $d$ pairwise disjoint
    leaves $\ell_1,\dots,\ell_d$ in $\lam$ such that for each $i$,
    $\si_d(\ell_i)=\ell$.

\item (Gap invariance) For a gap $G$ of $\lam$, the set
    $H=\ch(\si_d(G'))$ is a (possibly degenerate) leaf, or a gap of
    $\lam$, in which case $\si_d|_{\bd(G)}:\bd(G)\to \bd(H)$
    (by Definition~\ref{d:lea} $\si_d$ is defined on $\bd(G)$) is a
    positively oriented composition of a monotone map and a
    covering map (a {\em monotone map} is a map such that the full
    preimage of any connected set is connected).
\end{enumerate}

\end{dfn}

Note that some geo-laminations are not generated by equivalence
relations. We will use a special extension $\si^*_{d, \lam}=\si_d^*$
of $\si_d$ to the closed unit disk associated with $\lam$. On
$\uc$ and all leaves of $\lam$, we set $\si^*_d=\si_d$ (in
Definition~\ref{d:lea}, $\si_d$ was extended over all
chords in $\ol \D$, including leaves of $\lam$). 
Otherwise, define $\si^*_d$ on the interiors of gaps using a
standard barycentric construction (see \cite{thu85}). Sometimes we
lighten the notation and use $\si_d$ instead of $\si^*_d$.
We will mostly use the map $\si_d^*$ in the case $\lam=\lam_\sim$ for some invariant
lamination $\sim$.

\begin{dfn}\label{d:crit}
A leaf of a lamination $\sim$ is called \emph{critical} if its
endpoints have the same image. A $\lam_\sim$-set $G$ is said to be
\emph{critical} if $\si_d|_{G\,'}$ is $k$-to-$1$ for some $k>1$. 
E.g., a periodic Siegel gap is a non-critical $\sim$-set, on
whose basis the first return map is not one-to-one because there must
be critical leaves in the boundaries of gaps from its orbit.
\end{dfn}

We need more notation. Let $a, b\in \uc$. By $[a, b], (a, b)$ etc we
denote the appropriate \emph{positively oriented} circle arcs from
$a$ to $b$. 
By $|I|$ we denote the length of
an arc $I$.

\begin{lem}\label{l:ficri}
If $\bq$ is a leaf of a $\si_d$-invariant geo-lamination $\lam$ and
$I$ is a closed circle arc with the same endpoints as $\bq$ such that
$|I|\ge \frac1{d}$, then either $\bq$ is critical, or there exists a
set $G$ with $G'\subset I$ such that $G$ is either a critical
$\lam$-set or a critical leaf.
\end{lem}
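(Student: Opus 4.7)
I argue by contradiction. Assume $\bq=\ol{ab}$ is non-critical and that no set $G$ with $G'\subset I$ is either a critical leaf or a critical $\lam$-set. I derive a contradiction using the hypothesis $|I|\ge 1/d$.

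First observe that non-criticality of $\bq$ forces $|I|>1/d$ strictly: indeed, $\sigma_d(b)=\sigma_d(a)+d|I|\pmod 1$, so if $|I|=1/d$ then $\sigma_d(a)=\sigma_d(b)$ and $\bq$ would be critical. Therefore $c=a+\tfrac{1}{d}$ lies in the interior of the arc $I$ and satisfies $\sigma_d(c)=\sigma_d(a)$; thus the chord $\ol{ac}$ is a \emph{critical chord} (its two endpoints have the same $\sigma_d$-image) with both endpoints in $I$. The strategy is to ``realize'' this critical chord within the geo-lamination $\lam$.

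Since $\lam$ covers $\ol\disk$ by leaves and gaps meeting only along leaves, the chord $\ol{ac}$ satisfies exactly one of the following: (i) $\ol{ac}$ is a leaf of $\lam$; (ii) $\ol{ac}$ lies in the closure of a single gap of $\lam$; (iii) $\ol{ac}$ crosses at least one leaf of $\lam$. Cases (i) and (ii) each yield the desired contradiction immediately: in (i) the chord $\ol{ac}$ is a critical leaf of $\lam$ with basis in $I$; in (ii) the containing gap $G$ has $\{a,c\}\subset G'$ with $\sigma_d(a)=\sigma_d(c)$, making $G$ critical, and because $a$ is an endpoint of the leaf $\bq\in\lam$ and $c\in I$, the gap $G$ must lie in the half-disk bounded by $\bq$ and $I$, forcing $G'\subset I$.

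In case (iii), let $\ell=\ol{uv}\in\lam$ be a leaf crossed by $\ol{ac}$. Since $\ell$ cannot cross the leaf $\bq$, both $u$ and $v$ lie in $I$; say $a<u<c<v$ in positive orientation. I then iterate the above trichotomy inside the sub-half-disk bounded by $\ell$ and the arc $(u,v)\subset I$, using a new critical chord with endpoints in this sub-arc --- for instance $\ol{u,u+\tfrac1d}$ when $v-u\ge 1/d$, which lies in the sub-half-disk. The iteration either terminates at a critical $\lam$-set or critical leaf with basis in $I$ (contradiction) or produces an infinite nested sequence of separating leaves in $\lam$; in the latter case, a compactness argument using closedness of $\lam^+$ yields a limit leaf or gap of $\lam$ whose basis lies in $I$ and forces criticality by continuity of $\sigma_d$.

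The main obstacle is formalizing the iteration in case (iii): selecting the next critical chord so that its endpoints lie in the current sub-arc, handling the ``degenerate'' subcase where the separating leaf has endpoints closer than $1/d$ (so that no length-$1/d$ critical chord fits in the sub-arc and a finer analysis of leaves incident to $u$ or $v$ is needed), and arguing termination or convergence of the nested descent to a genuinely critical $\lam$-set.
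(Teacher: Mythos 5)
Your cases (i) and (ii) are fine, and the observation that non-criticality of $\bq$ forces $|I|>\frac1d$ so that $c=a+\frac1d$ lies in the interior of $I$ is correct. The genuine gap is exactly the one you flag at the end: in case (iii) the descent breaks down precisely when the separating leaf $\ell=\ol{uv}$ has $v-u<\frac1d$, since then no critical chord fits inside the sub-arc $(u,v)$ at all (any two points with the same $\si_d$-image are at least $\frac1d$ apart), so there is nothing to recurse on. You gesture at "a finer analysis of leaves incident to $u$ or $v$" but give no argument, and this is not a corner case one can wave away --- it is the typical situation, because leaves crossing $\ol{ac}$ can have arbitrarily short holes. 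Moreover, even in the favorable subcase the terminal compactness step does not close: the critical chord changes at each stage of the descent, so the nested leaves need not all straddle a common critical chord, and a Hausdorff limit of these leaves can perfectly well be a short or even degenerate leaf that is not critical; "continuity of $\si_d$" does not by itself force the limiting $\lam$-set to be critical. As written, the proposal is an outline of an idea with the hard part unproved.

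The paper avoids the iteration entirely by an extremal argument. Under the standing assumption that no leaf with endpoints in $I$ is critical, it sets $\la(\ol{xy})$ to be the distance from the hole-length $|[x,y]|$ to the finite set $\{\frac1d,\dots,\frac{k-1}d\}$ (where $\frac{k-1}d<|I|<\frac kd$), over all leaves $\ol{xy}$ whose hole lies in $I$; a minimizer $\ell$ exists and, by minimality, is an edge of a gap $G$ on the side where $\la$ would decrease. One then shows that if $G$ were non-critical, $G$ would have another edge with strictly smaller $\la$-value, a contradiction. This one-shot variational argument replaces your nested descent and does not need any sub-arc to have length at least $\frac1d$. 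If you want to salvage your approach, you would essentially have to reinvent something like this minimizer to control what happens when the sub-arcs become shorter than $\frac1d$.
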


Note that a critical leaf may not be an $\lam$-set.

\begin{proof}
Let $\bq=\ol{ab}$ and $I=[a, b]$. Assume that there are no critical
leaves among those with endpoints in $I$; then there exists $k>0$ with $\frac{k-1}d<|I|<\frac{k}d$.
For each leaf $\ol{xy}$ whose hole $(x,y)$ is contained in $I$ let $\la(\ol{xy})$ be the minimal distance from $|[x, y]|$
to the set $T=\{\frac1d, \frac2d, \dots, \frac{k-1}d\}$. Clearly, there
is a leaf $\ell=\ol{xy}$ such that $\la(\ell)=\ga>0$ is minimal, and
its well-defined side $S$ such that any chord $\hell$, close to $\ell$ from
that side, has $\la(\hell)<\la(\ell)$. Hence $\ell$ is an edge of a gap
$G$ located on this very side of $\ell$. Moreover, $G$ is located in
the component of $\disk\sm \ol{ab}$ containing $\ell$. Indeed, this is
clear if $\ell\ne \bq$. On the other hand, if $\ell=\bq$ then any leaf
$\ell_1$ with endpoints outside $[a, b]$ gives $\la(\ell_1)>\la(\ell)$
because we measure the distance from $|[x, y]|$ to the set $\{\frac1d,
\frac2d, \dots, \frac{k-1}d\}$ \emph{but not to} $\frac{k}d$.

Assume first that $|[x, y]|=\frac{i}{d} + \ga$, where $1\le i\le
k-1$. Then the side $S$ of $\ell$ is the one not facing $\bq$, but
facing the circle arc $[x, y]$. Hence, $\si_d(G')\subset [\si(x),
\si(y)]$. Assume that $G$ is not critical. Then it maps onto its
image one-to-one, preserving orientation. Thus the points of $G'$
belong to $\si_d^{-1}[\si_d(x),\si_d(y)]$, but not to the same
component of this set. This follows since
$\si^{-1}_d([\si_d(x),\si_d(y)])$ consists of $d$ disjoint intervals
of length $\ga\le \frac{1}{2d}$ and $|[x, y]|>\frac1d$.
Choose an edge $\ell'=\ol{x'y'}$ of $G$ connecting points from
distinct components of $\si_d^{-1}[\si_d(x),\si_d(y)]$ so that $[x',
y']$ is a positively oriented arc. It follows that $|[x',
y']|>\frac{1}d$ while $|[\si_d(x'), \si_d(y')]|<|[\si_d(x),
\si_d(y)]|$, contradicting the choice of $\ol{xy}$. The case when
$[x, y]=\frac{i}{d} - \ga, 1\le i\le k-1$, can be considered
similarly.
\end{proof}

\subsection{Laminational sets and their basic properties}\label{ss:st}

\begin{dfn}\label{d:restuff}
Let $f:X\to X$ be a self-mapping of a set $X$. For a set $G\subset
X$, define the \emph{return time} (to $G$) of $x\in G$ as the least
positive integer $n_x$ such that $f^{n_x}(x)\in G$, or infinity if there is
no such integer. Let $n=\min_{y\in G} n_y$, define $D_G=\{x:n_x=n\}$,
and call the map $f^n:D_G\to G$
the \emph{remap} (of $G$).
Similarly, we talk about \emph{refixed}, \emph{reperiodic} points (of a certain \emph{reperiod}),
and \emph{reorbits} of points in $G$.
\end{dfn}

E.g., if $G$ is the boundary of a periodic Fatou domain of period
$n$ of a topological polynomial $f_\sim$ whose images are all
pairwise disjoint until $f^n_\sim(G)=G$, then $D_G=G$ and the
corresponding remap on $D_G=G$ is the same as $f^n_\sim$.

\begin{dfn}\label{d:lamset}
If $A\subset \uc$ is a closed set such that all the sets
$\ch(\si^i(A))$ are pairwise disjoint, then $A$ is called
\emph{wandering}. If there exists $n\ge 1$ such that all the sets
$\ch(\si^i(A)), i=0, \dots, n-1$ have pairwise disjoint interiors while
$\si^n(A)=A$, then $A$ is called \emph{periodic}
of period $n$. If there exists
$m>0$ such that all $\ch(\si^i(A)), 0\le i\le m+n-1$ have pairwise
disjoint interiors and $\si^m(A)$ is periodic
of period $n$, then we call $A$
\emph{preperiodic}. Moreover, suppose that $A$ is wandering, periodic
or preperiodic, and for every $i\ge 0$ and every hole $(a, b)$ of
$\si^i(A)$ either $\si(a)=\si(b)$ or the positively oriented arc
$(\si(a), \si(b))$ is a hole of $\si^{i+1}(A)$.
Then we call $A$ (and $\ch(A)$) a \emph{laminational set};
we call both $A$ and $\ch(A)$ \emph{finite} if $A$ is finite.
A {\em stand alone gap} is defined as a laminational set with
non-empty interior.
\end{dfn}

We say that a closed set $A\subset\uc$ (and its convex hull) is
a {\em semi-laminational set} if, for every hole $(x,y)$
of $A$, we have $\si_d(x)=\si_d(y)$, or the open arc $(\si_d(x),\si_d(y))$
is a hole of $A$.
Note that we do not assume that $\si_d(A)=A$ (if this holds, then
$A$ is an invariant laminational set).

Recall that in Definition~\ref{d:degreek} we defined Fatou gaps $G$
of various degrees as well as Siegel gaps. Given a periodic Fatou
gap $G$ we also introduced the monotone map $\psi_G$ which
semiconjugates $\si_d|_{\bd(G)}$ and the appropriate model map
$\hat\si_G:\uc\to \uc$.
This construction can be also done for stand alone Fatou gaps $G$.

Indeed, consider the basis $G\cap \uc=G'$ of $G$ (see
Definition~\ref{d:lea}) as a subset of $\bd(G)$.
It is well-known that $G'$ coincides with the union $A\cup B$ of two well-defined
sets, where $A$ is a Cantor subset of $G'$ or an empty set and $B$ is a countable set.
In the case when $A=\0$, the map $\psi_G$ simply collapses $\bd(G)$ to a point.
However, if $A\ne \0$, one can define a semiconjugacy $\psi_G:\bd(G)\to \uc$ which collapses all holes of
$G'$ to points. As in Definition~\ref{d:degreek}, the map $\psi_G$
semiconjugates $\si_d|_{\bd(G)}$ to a circle map which is either an
irrational rotation or the map $\si_k, k\ge 2$. Depending on the
type of this map we can introduce for periodic infinite laminational
sets terminology similar to Definition~\ref{d:degreek}. In
particular, if $\si_d|_{\bd(G)}$ is semiconjugate to $\si_k, k\ge 2$
we say that $G$ is a \emph{stand alone Fatou gap of degree $k$.}

\begin{dfn}\label{d:relam}
If $G$ is a periodic laminational set such that $G'$ is finite and
contains no refixed points, then $G$ is said to be a \emph{finite}
\emph{(periodic) rotational set}. Finite rotational sets and Siegel
gaps $G$ are called \emph{(periodic) rotational sets}. If such $G$
is invariant, we call it an \emph{invariant rotational set}.
\end{dfn}

The maps $\si_k$ serve as models of remaps on periodic gaps of
degree $k\ge 2$. For rotational sets, models of remaps are
rotations.

\begin{dfn}\label{d:rotnum}
A number $\tau$ is said to be the \emph{rotation number} of a
periodic set $G$ if for every $x\in G'$ the circular order of
points in the reorbit of $x$ is the same as the order of points $0,
\rot(0), \dots$ where $\rot:\uc\to \uc$ is the rigid rotation by the
angle $\tau$.
\end{dfn}

It is easy to see that to each rotational set $G$, one can associate
its well-defined rotation number $\tau_G=\tau$ (in the case of a
finite rotational set, the property that endpoints of holes are
mapped to endpoints of holes implies that the circular order on $G'$
remains unchanged under $\si$). Since $G'$ contains no refixed
points by our assumption, $\tau\ne 0$. Given a topological polynomial $f_\sim$ and
an $f_\sim$-periodic point $x$ of period $n$,
we can associate to $x$ the rotation number $\rho(x)$ of $\si^n_d$ restricted to the
$\sim$-class $p_\sim^{-1}(x)$, corresponding to $x$ (recall, that
$p_\sim:\uc\to J_\sim=\uc/\sim$ is the quotient map associated to
$\sim$); then if $\rho(x)\ne 0$ the set $p_\sim^{-1}(x)$ is
rotational, but if $\rho(x)=0$ then the set $p_\sim^{-1}(x)$ is not
rotational.

The following result allows one to find fixed laminational sets of
specific types in some parts of the disk; for the proof see \cite{bfmot10}.

\begin{thm}\label{t:fxpt}
Let $\sim$ be a $\si_d$-invariant lamination. Consider the
topological polynomial $f_\sim$ extended over $\C$. Suppose that
$e_1, \dots, e_m\in J_\sim$ are $m$ points, and $X\subset K_\sim$ is
a component of $K_\sim\sm \{e_1, \dots, e_m\}$ such that for each
$i$ we have $e_i\in \ol{X}$ and either $f_\sim(e_i)=e_i$ and the
rotation number $\rho(e_i)=0$, or $f_\sim(e_i)$ belongs to the
component of $K_\sim\sm \{e_i\}$ which contains $X$. Then at least
one of the following claims holds:

\begin{enumerate}

\item $X$ contains an invariant domain of degree $k>1$;

\item $X$ contains an invariant Siegel domain;

\item $X\cap J_\sim$ contains a fixed point with non-zero rotation
number.

\end{enumerate}

\end{thm}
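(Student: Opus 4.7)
The plan is to reduce the statement to the purely combinatorial ``disk version'' of the theorem (the commented statement above, proved in \cite{bfmot10}), and then translate its output back to $J_\sim$.

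First I would lift the picture via the quotient map $p_\sim:\ol{\disk}\to K_\sim$. For each $e_i$, let $g_i=p_\sim^{-1}(e_i)\cap\uc$ be the corresponding finite $\sim$-class, and let $\ell_i$ be the unique edge of $\ch(g_i)$ that faces $X$. Let $Y$ be the component of $\disk\sm\bigcup_i\ch(g_i)$ whose closure in $\ol{\disk}$ projects onto $\ol X$; then $\bd(Y)\cap\disk=\{\ell_1,\dots,\ell_m\}$. By (D3) of Definition~\ref{d:si-inv-lam}, $\si_d$ sends edges of $\ch(g_i)$ to edges of $\ch(\si_d(g_i))$, so the two alternatives for $e_i$ translate cleanly: $f_\sim(e_i)=e_i$ with $\rho(e_i)=0$ forces $\si_d$ to fix every point of $g_i$, hence both endpoints of $\ell_i$ are $\si_d$-fixed; and $f_\sim(e_i)$ in the $X$-component of $K_\sim\sm\{e_i\}$ forces $\si_d(g_i)$, and therefore $\si_d(\ell_i)$, to lie on the $Y$-side of $\ell_i$. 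Thus the disk version's hypotheses are met and it produces a non-degenerate $\si_d$-invariant $\lam_\sim$-set $T\subset Y$.

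Next I would classify $T$. If $T$ is an infinite (Fatou) gap, Definition~\ref{d:degreek} asserts it is either a Siegel gap or a gap of degree $k\ge 2$; since $T\subset Y$ avoids $\bigcup\ell_i$, its $p_\sim$-image bounds an invariant Siegel (resp.\ degree-$k$) domain strictly inside $X$, yielding conclusion (2) (resp.\ (1)). If $T$ is a finite gap or leaf, then $p_\sim(T)$ is a single fixed point $x\in X\cap J_\sim$ of $f_\sim$ whose rotation number equals that of $\si_d$ acting on $T\cap\uc$; if this rotation number is non-zero, conclusion (3) follows.

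The main obstacle is the residual case: $T$ is finite with rotation number $0$, so $x=p_\sim(T)$ is a rotation-zero fixed cutpoint strictly inside $X$. My approach is to enlarge the boundary collection by appending $x$ to $\{e_1,\dots,e_m\}$ (which trivially satisfies the hypothesis since $x$ is fixed with $\rho(x)=0$) and re-apply the argument to a suitable sub-component $X'$ of $X\sm\{x\}$. The subtlety, and the technical heart of the corresponding argument in \cite{bfmot10}, is that an original $e_i$ lying on $\bd(X')$ with $f_\sim(e_i)$ previously in the $X$-component of $K_\sim\sm\{e_i\}$ may now have $f_\sim(e_i)$ in a different sub-component of $X\sm\{x\}$; one must either choose $X'$ to contain all such forward images, or use a Lefschetz-type bound on the number of rotation-zero fixed structures of $f_\sim$ to ensure that the recursion terminates in case (1), (2), or (3).
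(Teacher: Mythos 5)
The paper does not give a proof of this theorem; the text reads ``for the proof see \cite{bfmot10}.'' The ``disk version'' your reduction hinges on is the older (commented-out) formulation of the same theorem and is likewise unproved in this paper. So there is no in-paper proof to compare against; the question is whether your argument closes on its own.

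Your reduction to the disk version via $p_\sim$ is sound, including the observation that $\rho(e_i)=0$ together with $f_\sim(e_i)=e_i$ forces $\si_d$ to fix every point of $g_i=p_\sim^{-1}(e_i)$: an orientation-preserving self-bijection of a finite circularly ordered subset of $\uc$ with a fixed point is the identity. The classification of the resulting non-degenerate invariant $\lam_\sim$-set $T$ (infinite $\Rightarrow$ Fatou or Siegel domain in $X$, finite with $\rho\ne 0$ $\Rightarrow$ rotational fixed point in $X\cap J_\sim$) is also correct. But the residual case --- $T$ finite with rotation number $0$ --- is only sketched, and this is exactly where the theorem's content lies. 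Two corrections to the sketch. First, your worry about $f_\sim(e_i)$ landing in a ``different sub-component of $X\sm\{x\}$'' is a non-issue: the hypothesis refers to the component of $K_\sim\sm\{e_i\}$ (cut at $e_i$ alone), and since $X'\subset X$, that component contains $X'$ if and only if it contains $X$; hence every sub-component $X'$ of $X\sm\{x\}$ inherits the hypotheses verbatim with $x$ appended. Second, a ``Lefschetz-type bound'' is not the right termination device. The natural argument is that a $\si_d$-invariant finite gap or leaf with rotation number $0$ has all vertices $\si_d$-fixed, and $\si_d$ has only $d-1$ fixed points on $\uc$, so there are only finitely many such $T$; each recursion step strictly shrinks the region and places one more such $T$ on its boundary, so the recursion must terminate in case (1), (2), or (3). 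Without these two fillers (and without a proof of the disk version, which you take as given), the proposal is a plausible outline rather than a complete proof.
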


Observe that for dendritic topological Julia sets $J_\sim$ the claim
is easier as cases (1) and (2) above are impossible. Thus, in the
dendritic case Theorem~\ref{t:fxpt} implies that there exists a
rotational fixed point in $X\cap J_\sim$.

Let $U$ be a laminational set. For every edge $\ell$ of $U$, let
$H_U(\ell)$ denote the hole of $U$ that shares both endpoints with
$\ell$ (if $U$ is fixed, we may drop the subscript in the above
notation). This is well-defined if $U$ is a gap. If $U=\ell$ is a
leaf, then there are two arc-components of $\uc\sm U$, and each
time $H_U(\ell)$ is needed, we explain which arc we mean.
The hole $H_U(\ell)$ is called the \emph{hole of $U$ behind (at) $\ell$}.
In this situation we define $|\ell|_U$ as $|H_U(\ell)|$.
It is also convenient to consider similar notions for a closed connected union $A$ of laminational sets.
Given a leaf $\ell\in A$, it may happen that $A$
is contained in the closure of a unique component of $\disk\sm \ell$.
Then the other component of $\disk\sm \ell$ is bounded by $\ell$ and an arc open in $\uc$,
called the \emph{hole of $A$ behind (at) $\ell$}, and denoted by $H_A(\ell)$.
As before, set $|\ell|_A=|H_A(\ell)|$.
Observe that distinct holes of $A$ are pairwise disjoint.

Suppose that $G$ is a laminational set. Mostly, the holes of $G$ map
increasingly onto the holes of $\si_d(G)$. However, if the length of
a hole is at least $\frac1{d}$, a different phenomenon takes place.
Namely, suppose that there is an edge $\ol{ab}$ of $G$ such that the
open arc $(a,b)$ is a hole of $G$. Suppose that $\frac kd<|(a,
b)|<\frac {k+1}d$. By definition, $(\si_d(a),\si_d(b))$ is a hole of
$\si_d(G)$. The map $\si_d$ maps $(a,b-\frac kd)$ onto
$(\si_d(a),\si_d(b))$ increasingly and then wraps $[b-\frac kd, b)$
around the circle exactly $k$ times.
This shows the importance of holes
that are longer than $\frac 1d$ as these are the only holes which
the map $\si_d$ does not take one-to-one onto the ``next'' hole.

\begin{dfn}\label{d:0major}
If $\ell$ is an edge of a $\si_d$-laminational set $G$ such that its
hole $H_G(\ell)$ is not shorter than $\frac 1d$, then $\ell$ is
called a \emph{major edge of $G$ (with respect to $\si_d$)} (or
simply a \emph{major of $G$ (with respect to $\si_d$)}, and
$H_G(\ell)$ is called a \emph{major hole of $G$ (with respect to
$\si_d$)}.
\end{dfn}

Observe that some leaves which are not majors of $G$ with respect to
$\si_d$ may become majors of $G$ with respect to higher powers of $\si_d$.

\begin{lem}\label{l:maj}
{\rm(1)} If $\ell$ is any chord, then there exists a component of $\uc\sm
\ell$ of length at least $\frac 12\ge \frac{1}{d}$.

{\rm(2)} Suppose that $\ell=\ol{xy}$ is a non-invariant leaf of an
invariant semi-laminational set $Q$.
Then there exists $n$ such that $H_Q(\si_d^n(\ell))$ is of length
at least $\frac{1}d$, and $\ell$ is either (pre)critical, or (pre)periodic.
In particular, an edge of a (pre)periodic infinite
gap of a geo-lamination is either (pre)periodic or (pre)critical.

{\rm(3)} For any edge $\ell$ of a laminational set $G$ there exists
$n$ such that $\si_d^n(\ell)$ is a major of $\si_d^n(G)$. If $G$ is
periodic, then $\ell$ is either (pre)critical, or (pre)periodic.
\end{lem}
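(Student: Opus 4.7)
The plan is as follows. Part (1) is immediate: any chord divides $\uc$ into two arcs whose lengths sum to $1$, so the longer arc has length at least $\tfrac12 \geq \tfrac1d$.

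For part (2), write $\ell_n = \si_d^n(\ell)$ and $H_n = H_Q(\ell_n)$. If some $\ell_n$ is degenerate, then $\ell$ is (pre)critical, so assume all $\ell_n$ are non-degenerate. The semi-laminational property of $Q$ then ensures that $(\si_d(a),\si_d(b))$ is a hole of $Q$ whenever $(a,b)$ is a non-critical hole; combined with $\si_d(Q)=Q$, this makes each $\ell_n$ an edge of $Q$. Now, as long as $|H_n|<\tfrac1d$, the map $\si_d$ is injective on $\ol{H_n}$, so $H_{n+1}=\si_d(H_n)$ and $|H_{n+1}|=d\,|H_n|$. If this held for every $n$, iteration would give $|H_n|=d^n|H_0|\to\infty$, contradicting $|H_n|\leq 1$. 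Hence some $|H_n|\geq\tfrac1d$, which is the first conclusion.

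For the (pre)periodic dichotomy, observe that $Q$ admits at most $d$ pairwise disjoint holes of length $\geq\tfrac1d$; call these the major holes of $Q$. The doubling argument just given, reapplied from each time a major is encountered, shows that infinitely many $\ell_n$ are edges at major holes. By the pigeonhole principle, $\ell_{n_1}=\ell_{n_2}$ for some $n_1<n_2$, yielding (pre)periodicity of $\ell$. For the ``in particular'' clause, let $G$ be a (pre)periodic infinite gap whose iterate $G^*=\si_d^m(G)$ has period $p$. A laminational set is semi-laminational, $G^*$ is $\si_d^p$-invariant, and semi-laminationality for $\si_d^p$ follows by iterating that for $\si_d$; so apply the already-proven first half of (2) with $\si_d^p$ in place of $\si_d$ to the edge $\si_d^m(\ell)$ of $G^*$ to conclude that $\ell$ is (pre)periodic or (pre)critical.

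Part (3) proceeds in the same vein. For an arbitrary edge $\ell$ of a laminational set $G$, the hole lengths $|H_{\si_d^n(G)}(\si_d^n(\ell))|$ grow by a factor of $d$ as long as they remain below $\tfrac1d$, so some iterate must be a major of $\si_d^n(G)$; this is the first claim. If $G$ is periodic of period $p$, apply part (2) with $\si_d^p$ to the $\si_d^p$-invariant semi-laminational set $G$ to obtain that $\ell$ is (pre)periodic or (pre)critical. The main step to watch throughout is bookkeeping the semi-laminational property across iterates so that each $(\si_d(a),\si_d(b))$ is again a hole (of the appropriate set), after which the ``at most $d$ majors'' pigeonhole closes the argument.
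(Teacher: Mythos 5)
Your proof is correct and follows essentially the same approach as the paper: doubling of hole lengths under $\si_d$ until a major is reached, then pigeonhole on the at most $d$ disjoint major holes to get (pre)periodicity, with part (2) applied for $\si_d^p$ to handle periodic $G$ in part (3). The paper's proof of (3) additionally singles out the case where some image $(\si_d^*)^t(G)$ degenerates to a leaf, so that $H_{\si_d^t(G)}(\cdot)$ needs a chosen side; your plan still covers that case via part (1), so the omission is only cosmetic.
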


\begin{proof}
(1) Left to the reader.

(2) For any $i$, set $T_i=H_Q(\si^i_d(\ell))$.
If $|T_i|<\frac{1}d$ then $|T_{i+1}|=|\si_d(T_i)|=d|T_i|$ proving the first claim of (2).

Suppose that $\ell$ is not precritical. Then all arcs $T_i$ are
non-degenerate. Hence in this case we can divide the orbit of $\ell$
into segments, each ending with a leaf $\si_d^n(\ell)$ such that
$|T_n|>\frac{1}d$. Since there cannot be more than $d$ such arcs $T_n$,
we conclude that $\ell$ must be (pre)periodic.

(3) If $\ell$ is precritical, choose $n$ such that $\si_d^n(\ell)$
is a critical leaf; clearly, then $\si_d^n(\ell)$ is a major of
$\si^n_d(G)$. Otherwise suppose first that $G$ is such that for some
minimal $t$ the set $(\si_d^*)^t(G)$ is a leaf. Then it suffices to
observe that there exists a component of $\uc\sm (\si_d^*)^t(G)$
which is longer than or equal to $\frac{1}d$ and that we can take
$n=t$. So, we may assume that $\ell$ is non-(pre)critical and no image
of $G$ is a leaf. As in (2), we observe that if $\ell$ is not a
major, then

$$|H_{(\si_d^*)^{n+1}(G)}(\si_d^{n+1}(\ell))|=|\si^*_d(H_{(\sigma_d^*)^n(G)}(\sigma_d^n(\ell))|
=d|H_{(\sigma_d^*)^n(G)}(\sigma_d^n(\ell))|.$$

\noindent This proves the first claim. The second claim follows from
the same arguments as used in the proof of (2).
\end{proof}

\begin{lem}
\label{l:fx-maj} An edge of an invariant laminational set $G$ is a
major if and only if the closure of its hole contains a fixed point.
Moreover, an edge of an invariant laminational set $G$ which does
not have fixed vertices is a major if and only if its hole contains
a fixed point.
\end{lem}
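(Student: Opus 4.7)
The plan is to exploit invariance $\si_d(G')=G'$ together with the hole condition, tracked through the rotation defect $r=(d-1)a\bmod 1\in[0,1)$. Observe that $\si_d(a)=a+r\bmod 1$, so $r=0$ iff $a$ is $\si_d$-fixed; more generally, the arc $[a,b]$ contains a fixed point $k/(d-1)$ of $\si_d$ iff either $r=0$ or $r+(d-1)\alpha\ge 1$, where $\alpha=|H_G(\ell)|$. This will be our working criterion.

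For the forward implication, suppose $\ell$ is major, i.e.\ $\alpha\ge 1/d$. Invariance forces $\si_d(a)\in G'$, and since $(a,b)\cap G'=\0$, the point $\si_d(a)=a+r\bmod 1$ cannot lie in the open hole. Hence either $r=0$ (and $a$ is a fixed point in $[a,b]$), or $r\ge\alpha$. In the latter case, $r+(d-1)\alpha\ge\alpha+(d-1)\alpha=d\alpha\ge 1$, so by the criterion $[a,b]$ contains a fixed point. (The same argument applies uniformly whether $\ell$ is critical---with $\si_d(a)=\si_d(b)=c\in G'$---or not, since only $\si_d(a)\in G'$ is used.)

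For the reverse implication, suppose $[a,b]$ contains a fixed point $p$. If $\ell$ is critical, then $\alpha=m/d\ge 1/d$ automatically. Otherwise $\ell$ is non-critical and we split on $p$. If $p=a$ (the case $p=b$ is symmetric), then $\si_d(a)=a$, and the laminational condition makes $(\si_d(a),\si_d(b))=(a,\si_d(b))$ a hole of $G$ emanating CCW from $a$; uniqueness of the CCW hole at $a$ forces $\si_d(b)=b$, so $a,b$ are distinct fixed points and $\alpha\ge 1/(d-1)>1/d$. If $p\in(a,b)$, suppose for contradiction that $\alpha<1/d$; then $\si_d|_{[a,b]}$ is injective, so $p=\si_d(p)\in[\si_d(a),\si_d(b)]$. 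The hole $(\si_d(a),\si_d(b))$ of $G$ either coincides with $(a,b)$, forcing $a,b$ fixed and hence $\alpha\ge 1/(d-1)$, a contradiction; or it is disjoint from $(a,b)$, whence $p\in\{\si_d(a),\si_d(b)\}\subset G'$, contradicting $(a,b)\cap G'=\0$.

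The ``moreover'' statement follows at once: when neither $a$ nor $b$ is fixed, any fixed point of $\si_d$ in $[a,b]$ produced by the main equivalence must lie in the open hole $(a,b)$. The principal obstacle to navigate is that majorness ($\alpha\ge 1/d$) is strictly weaker than the fixed-point spacing $1/(d-1)$, so the naive winding argument $(d-1)\alpha\ge 1$ is unavailable; the missing margin is supplied entirely by the structural inequality $r\ge\alpha$, which rests on invariance preventing $\si_d(a)$ from entering the hole $(a,b)$.
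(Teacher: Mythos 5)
Your proof is correct and follows essentially the same route as the paper: both arguments hinge on the invariance-forced observation that $\si_d(a)$ and $\si_d(b)$ lie in $G'$ and hence outside the open hole $(a,b)$, and both then conclude via a winding/lifting count (the paper asserts the fixed point qualitatively where you make the arithmetic explicit with $r=(d-1)a\bmod 1$). You have also filled in the fixed-endpoint case that the paper leaves to the reader, but the underlying idea is identical.
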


\begin{proof}
Let $\ell$ be an edge of $G$. The case when $\ell$ has fixed
endpoints is left to the reader. Otherwise if $|H_G(\ell)|<1/d$,
then $H_G(\ell)$ maps onto the hole $\si_d(H_G(\ell))$ one-to-one.
Since in this case $\si_d(H_G(\ell))$ is disjoint from $H_G(\ell)$,
we see that $H_G(\ell)$ contains no fixed points. On the other hand,
suppose that $|H_G(\ell)|\ge 1/d$. Then $\si_d(H_G(\ell))$ covers
the entire $\uc$ while the images the endpoints of $H_G(\ell)$ are
outside $H_G(\ell)$. This implies that there exist a fixed point $a$
in $\ol{H_G(\ell)}$, and by the assumption we see that $a\in
\ol{H_G(\ell)}$.
\end{proof}


If $G$ is a stand alone Fatou gap, then the map $\psi_G$ is defined
on the boundary of $G$. If $\ell$ is any chord connecting two points
$x,y\in G'$, then we define the {\em $\psi_G$-image} of $\ell$ as
the chord connecting $\psi_G(x)$ with $\psi_G(y)$ (this chord may be
degenerate).

\begin{lem}
\label{l:inv-subgap} Suppose that $G$ is a stand alone Fatou gap of
degree $k>1$ and period $n$. Suppose also that $\sim$ is a
$\si_d$-invariant lamination. If no leaf of $\sim$ crosses any major
of $G$ (understood as majors of $G$ with respect to $\si_d^n$)
inside $\disk$, then the $\psi_G$-images of leaves of $\sim$ form an
invariant geo-lamination $\lam_{\psi_G(\sim)}$ of degree $k$.
\end{lem}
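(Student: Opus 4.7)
The plan is to verify the defining axioms of an invariant geo-lamination of degree $k$ for the family $\lam_{\psi_G(\sim)}$ consisting of the $\psi_G$-images of leaves of $\sim$, together with all degenerate (singleton) leaves on $\uc$.

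First extend $\psi_G$ to a continuous, weakly orientation-preserving surjection $\psi_G:\uc\to\uc$ by declaring $\psi_G(x)=\psi_G(a)=\psi_G(b)$ for every $x$ in an open hole $(a,b)$ of $G'$. Since this extension weakly preserves circular order on $\uc$, the $\psi_G$-images of two pairwise unlinked chords remain pairwise unlinked or become degenerate; applied to distinct leaves of $\sim$, this gives the non-crossing property of $\lam_{\psi_G(\sim)}$. Closedness of $\bigcup\lam_{\psi_G(\sim)}$ then follows from the closedness of $\bigcup\lam_\sim$ together with continuity of $\psi_G$.

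The non-crossing hypothesis controls the position of leaves of $\sim$ relative to $G$: for each major edge $\ol{ab}$ of $G$ (with respect to $\si_d^n$), every leaf $\ell$ of $\sim$ has both endpoints in the same closed arc $[a,b]$ or $[b,a]$. Consequently, if any endpoint of $\ell$ lies in an open major hole $(a,b)$, then both do, and since $\psi_G$ collapses $[a,b]$ to a single point, $\psi_G(\ell)$ is degenerate. For leaves $\ell$ with no endpoint in any major hole, the endpoint-wise identity $\psi_G(\si_d^n(z))=\si_k(\psi_G(z))$ holds, because $\si_d^n$ carries each closed non-major hole (and each point of $G'$) homeomorphically onto a closed hole of $G$ whose $\psi_G$-image agrees with the $\si_k$-image of the original. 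Hence $\si_k(\psi_G(\ell))=\psi_G(\si_d^n(\ell))\in\lam_{\psi_G(\sim)}$ for every $\ell\in\lam_\sim$, establishing forward invariance.

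For the $k$-to-$1$ preimage condition, take any non-degenerate leaf $\bar\ell=\psi_G(\ell)$. By the semiconjugacy, the $k$ preimages of each endpoint of $\bar\ell$ under $\si_k$ correspond to the $k$ lifts into $G'$ of the corresponding endpoint of $\ell$ under $\si_d^n|_{G'}$. By backward invariance of $\sim$ under $\si_d$, the $\sim$-class through the endpoints of $\ell$ has $k$ preimage $\sim$-classes inside $G'$, each producing an edge $\ol{x^{(i)}y^{(i)}}$ of its convex hull that is a leaf of $\sim$; the $\psi_G$-images of these $k$ leaves are the $k$ preimages of $\bar\ell$ under $\si_k$, and they are pairwise disjoint because their $\si_k$-images in $\uc$ are. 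Gap invariance of $\lam_{\psi_G(\sim)}$ follows from leaf invariance by standard arguments for geometric laminations. The main obstacle I anticipate is handling leaves of $\sim$ whose endpoints lie in open major holes, where the semiconjugacy $\psi_G\circ\si_d^n=\si_k\circ\psi_G$ can fail pointwise; the non-crossing hypothesis is crafted precisely to force $\psi_G$-images of such leaves to be degenerate, salvaging the invariance at the level of chords.
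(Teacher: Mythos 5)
Your proof takes a genuinely different route from the paper's. The paper's argument centers on analyzing how leaves of $\sim$ can cross the non-major edges of $G$: using Lemma~\ref{l:maj}(3) (non-major edges eventually map to majors), it shows that a leaf of $\sim$ crossing two non-major edges with distinct $\si_d^n$-images would, upon iteration, be forced to cross a major (contradicting the hypothesis), so its forward orbit must eventually cross two edges with equal image and hence project under $\psi_G$ to a $\si_k$-critical leaf; invariance of the projected geo-lamination then follows from ``standard'' properties. You instead extend $\psi_G$ monotonically to all of $\uc$ by collapsing each closed hole of $G'$ to a point and verify the geo-lamination axioms directly. This buys a cleaner treatment of the unlinking and closedness properties (they fall out of weak monotonicity of the extended $\psi_G$) and a transparent forward invariance: the non-crossing hypothesis forces leaves meeting an open major hole to have degenerate $\psi_G$-image, while for all other leaves the pointwise semiconjugacy $\psi_G\circ\si_d^n=\si_k\circ\psi_G$ holds because $\si_d^n$ sends non-major holes homeomorphically onto holes. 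That part is sound (aside from the inessential slip that ``if any endpoint of $\ell$ lies in $(a,b)$ then both do'' -- the other endpoint could be $a$ or $b$; the conclusion that $\psi_G(\ell)$ is degenerate still holds since $\psi_G$ collapses the closed arc).

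The backward invariance step, however, has a genuine gap. You assert that the $\sim$-class through the endpoints of $\ell$ has ``$k$ preimage $\sim$-classes inside $G'$.'' Backward invariance of $\sim$ concerns a single application of $\si_d$, giving at most $d$ preimage classes; here you need preimages under $\si_d^n$ that moreover all land in $G'$, and it is not automatic that exactly $k$ of the up to $d^n$ preimage classes meet $G'$, nor that each of them contributes an edge that $\psi_G$ sends to a $\si_k$-preimage of $\bar\ell$. More seriously, the justification ``they are pairwise disjoint because their $\si_k$-images in $\uc$ are'' is not an argument: by construction those $\si_k$-images all equal $\bar\ell$, so they are certainly not pairwise disjoint. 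What you actually need is that the $k$ chosen lifts in $\bd(G)$ project to pairwise unlinked chords that do not share endpoints; this requires a separate argument (e.g., using the fact that $\si_d^n|_{\bd(G)}$ is a $k$-fold covering modulo collapsed edges, together with the structure of $\sim$-classes). As it stands, the backward invariance and the pairwise-disjointness claim are unsupported, and the paper's reliance on Lemma~\ref{l:maj}(3) is precisely what controls the placement of $\sim$-leaves with respect to $G$ and makes the lifting well-behaved.
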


\begin{proof}
 Observe that $\psi_G$ semi-conjugates the restriction of $\si^n_d$
to $\bd(G)$ with the appropriate $\si_k$. We want to apply the
projection $\psi_G$ to the leaves of $\sim$ which cross $G$.

It follows from the definition of a major that if a leaf $\bq$ of
$\sim$ intersects (in $\disk$) two non-major edges $\ell$, $\ell''$ of $G$ that
have distinct images under $\si_d^n$ then $\si^n_d(\bq)$ intersects
$\si^n_d$-images of $\ell$, $\ell''$. By Lemma~\ref{l:maj}(3) both
$\ell$ and $\ell''$ will eventually map to majors of $G$; on the
other hand, no leaf of $\sim$ (in particular, no image of $\bq$) can
intersect majors of $G$ in $\disk$. Hence on some step the appropriate image of
$\bq$ will have to intersect two edges of $G$ with equal images.
This implies that the $\psi_G$-projection of this image of $\bq$ is a
$\si_k$-critical leaf. Otherwise leaves of $\sim$ cannot cross edges
of $G$ in $\disk$.

It is easy to see that this implies that the $\psi_G$-images of
leaves of $\sim$ intersecting $G$ form a geo-lamination. The
properties of geo-laminations generated by laminations, and the
description of the way leaves of $\sim$ can intersect edges of $G$
given in the previous paragraph now show that the induced in $\uc$
geo-lamination $\psi_G(\lam_\sim)$ is $\si_k$-invariant.
\end{proof}

\subsection{Cubic laminations}

By cubic laminations we mean $\si_3$-invariant laminations.
By definition, the rotation number of a periodic rotational set is not $0$.
Thus, the horizontal diameter-leaf $\di=\ol{0\frac12}$ is not a rotational set
(we think of $0$ and $\frac 12$ as being elements of $\R/\mathbb{Z}=\uc$; if
we realize $\uc$ as the unit circle in the plane of complex numbers, then
$0$ and $\frac 12$ will turn into $1$ and $-1$).
However, we often think of $\di$ along the same lines as when we consider
periodic (invariant) rotational sets.

\begin{dfn}\label{d:types23}
The major of an invariant rotational set $G$ whose major hole
contains $0$ is called the \emph{\mz} (of $G$).
The major of $G$ whose major hole contains $\frac12$ is said to be
the \emph{\mh} (of $G$).
\end{dfn}

\section{Dynamical core}\label{s:dyco}

In Section~\ref{s:dyco} we fix $\sim$ which is a $\si_d$-invariant
lamination, study the dynamical properties of the topological
polynomial $f_\sim:J_\sim\to J_\sim$, and discuss its dynamical core
$\ce_{f_\sim}$.
For brevity, we write $f$, $J$, $p$, $\ce$ etc for $f_\sim$, $J_\sim$, $p_\sim:\uc\to J$, $\ce_{f_\sim}$, respectively,
throughout Section~\ref{s:dyco}.
Note that, by definition, every topological Julia set $J$ in this section is locally connected.

\subsection{Proof of Theorem \ref{t:criticoreintr}}
Lemma~\ref{l:crainva} studies intersections between
atoms and complete invariant continua.


\begin{lem}\label{l:crainva}
Let $X\subset J$ be an invariant complete continuum and $A$ be an atom
intersecting $X$ but not contained in $X$.
Then $A\cap X=\{x\}$ is a singleton, $A$ is the boundary of a Fatou domain,
and one of the following holds:
{\rm(1)} for some $k$ we have $f^i(A)\cap X=\{f^i(x)\}, i<k$ and
$f^k(A)\subset X$, or {\rm(2)} $f^i(A)\cap X=\{f^i(x)\}$ for all $i$
and there exists the smallest $n$ such that $f^n(A)$ is the boundary of a periodic
Fatou domain of degree $r>1$ with $f^n(x)$ being a refixed point of
$f^n(A)$.
\end{lem}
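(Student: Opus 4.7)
My plan is to unwrap the completeness condition for $X$, propagate it along the forward orbit of $A$, and then invoke the (pre)periodicity of Fatou gaps due to Kiwi.

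First, I observe that $A$ cannot be a singleton, since $A=\{a\}$ with $a\in X$ would give $A\subset X$, contrary to hypothesis. Hence $A=\bd(U)$ for some bounded Fatou domain $U$ of $\C\sm J$. Applying the definition of completeness of $X$ to the component $U$ gives the alternative: either $X\cap \bd(U)$ consists of at most one point, or $\bd(U)\subset X$. The second option is ruled out by assumption, so $A\cap X=\{x\}$ is a singleton.

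Next I would iterate this observation. The branched-covering extension of $f$ to $\C$ sends bounded Fatou domains to bounded Fatou domains, so each $f^i(A)=\bd(f^i(U))$ is again the boundary of a Fatou domain, and invariance of $X$ puts $f^i(x)$ in $f^i(A)\cap X$. Applying completeness to $f^i(U)$ for every $i\ge 0$ then yields the dichotomy: either $f^i(A)\subset X$, or $f^i(A)\cap X=\{f^i(x)\}$.

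Now I split into subcases. If some iterate of $A$ lies in $X$, let $k$ be the smallest index with $f^k(A)\subset X$; for $i<k$ the dichotomy forces $f^i(A)\cap X=\{f^i(x)\}$, which is case (1). Otherwise $f^i(A)\cap X=\{f^i(x)\}$ for every $i\ge 0$. By Kiwi's theorem \cite{kiwi02} the Fatou gap corresponding to $A$ is (pre)periodic, so there is a smallest $n\ge 0$ for which $f^n(A)$ is a periodic Fatou atom; let $p$ denote its period. Since $f^{n+p}(A)=f^n(A)$, both $f^n(x)$ and $f^{n+p}(x)$ lie in $f^n(A)\cap X=\{f^n(x)\}$, forcing $f^{n+p}(x)=f^n(x)$. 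Thus $f^n(x)$ is a refixed point of $f^n(A)$.

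Finally, I would rule out that $f^n(A)$ is Siegel. On a Siegel atom the first-return map is semiconjugate via $\psi_G$ to an irrational rotation of $\uc$, which has no periodic points; hence no refixed point can exist on $f^n(A)$, contradicting the refixed point just produced. Therefore $f^n(A)$ must be the boundary of a periodic Fatou domain of degree $r>1$, completing case (2). The only step that requires real care is iterating the completeness dichotomy in tandem with Kiwi's (pre)periodicity in order to pin down $n$ and $p$; the remaining ingredients are immediate from the stated definitions and the classification of periodic Fatou atoms.
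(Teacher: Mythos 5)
Your proof is correct and follows the same route as the paper: completeness forces $A\cap X=\{x\}$ to be a singleton with $A$ a Fatou boundary, iterating this dichotomy along the orbit yields the two cases, and Kiwi's (pre)periodicity together with the singleton intersection property forces the refixed point. You additionally make explicit the exclusion of the Siegel alternative (which the paper leaves implicit), and you derive refixedness via $f^{n+p}(x)=f^n(x)$; the paper's phrasing is slightly more direct, observing that any return of an iterate of $x$ to $f^n(A)$ must again coincide with $f^n(x)$, which handles uniformly the possibility that the return time is shorter than the period $p$.
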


\begin{proof}
Since $X$ is complete, we may assume that $A\cap X=\{x\}$ is a
singleton and $A$ is the boundary of a Fatou domain such that
$f^i(A)\cap X=\{f^i(x)\}$ for all $i$. Choose the smallest $n$ such
that $f^n(A)$ is periodic. If $f^n(x)$ is not refixed in $f^n(A)$,
then another point from the orbit of $x$ belongs to $f^n(A)\cap X$,
a contradiction.
\end{proof}



In the next lemma we study one-to-one maps on complete continua. To
do so we need a few definitions. The set of all critical points of
$f$ is denoted by $\cri_f=\cri$. The $p$-preimage of $\cri$ is
denoted by $\cri_\sim$. We also denote the $\om$-limit set of $\cri$
by $\om(\cri)$ and its $p$-preimage by $\om(\cri_\sim)$. A {\em
critical atom} is the $p$-image of a critical gap or a critical leaf
of $\lam_\sim$. Thus, the family of critical atoms includes all
critical points of $f$ and boundaries of all bounded components of
$\C\sm J$ on which $f$ is of degree greater than $1$ while
boundaries of Siegel domains are not critical atoms. An atom of $J$ is said to be
\emph{precritical} if it eventually maps to a critical atom.

\begin{lem} \label{l:easytop}
Suppose that $X\subset J$ is a complete continuum containing no
critical atoms. Then $f|_X$ is one-to-one.
\end{lem}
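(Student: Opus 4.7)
The plan is to argue by contradiction. Suppose $f(x)=f(y)=z$ for distinct $x,y\in X$, and let $A=p^{-1}(x)$, $B=p^{-1}(y)$, $C=p^{-1}(z)$ denote the corresponding $\sim$-classes; then $A\ne B$ while $\si_d(A)=\si_d(B)=C$. I would choose $a\in A$, $b\in B$ so that one of the two open arcs of $\uc\sm\{a,b\}$, call it $H$, is as short as possible among all such choices. By this minimality, the interior of $H$ contains no representative of any $\sim$-class $D$ with $\si_d(D)=C$; otherwise a strictly shorter admissible pair could be produced.

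The chord $\ol{ab}$ is not a leaf of $\sim$, since $a\not\sim b$. So it either lies inside the convex hull of some gap of $\sim$ or crosses a leaf of $\sim$ transversely in $\disk$. In the first case, a finite gap would force $a\sim b$; hence the gap is a Fatou gap $G$ with $a,b\in G'$ belonging to distinct $\sim$-classes. Since $\{x,y\}\subset p(G')\cap X$ has more than one point, completeness of $X$ applied to the Fatou component $U=p(G^\circ)$ gives $p(G')\subset X$. But then $A,B\subset G'$ are distinct classes both mapping to $C$ under $\si_d$, so $\si_d$ is not injective on the classes of $G'$; hence $G$ is a critical Fatou gap, and $p(G')$ is a critical atom inside $X$, contradicting the hypothesis.

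In the second case, let $\ell=\ol{uv}$ be a leaf of $\sim$ crossing $\ol{ab}$, with $u\in H$. The preimage $\tilde X=p^{-1}(X)\subset\ol\disk$ is a continuum containing $\ch(A)$ and $\ch(B)$, which lie in opposite components of $\disk\sm\ol{ab}$. Since every leaf of $\sim$ collapses under $p$ to a single point of $J$, any $\sim$-leaf actually crossed by $\tilde X$ is itself contained in $\tilde X$; by connectedness at least one such $\ell\subset\tilde X$ exists, so $p(u)=p(v)\in X$. Minimality of $H$ forces $\si_d(u)\notin C$: otherwise the $\sim$-class of $u$ would either equal $A$ or $B$ (producing a strictly shorter admissible pair) or would be a third class of $\si_d^{-1}(C)$ with a representative in the interior of $H$. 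If $\ell$ is a critical leaf, then $p(\ell)$ is a critical point inside $X$, a contradiction. If not, further analysis of the configuration on either side of $\ell$ in $\tilde X$ either produces a shorter admissible pair, contradicting minimality, or lands inside a Fatou gap to which Case~1 applies, producing a critical Fatou gap inside $X$.

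The main obstacle is the second case: rigorously showing that the minimality-plus-continuum argument always terminates with a genuine critical leaf or critical Fatou gap whose $p$-image lies in $X$. The essential technical input is the finiteness of the preimage $\si_d^{-1}(C)$ in terms of $\sim$-classes (at most $d$ by (D2)), together with the completeness of $X$, which triggers Case~1 whenever $X$ meets a Fatou boundary in more than one point.
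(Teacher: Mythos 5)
Your Case 1 is essentially sound (modulo some details about why two distinct classes of $G'$ mapping onto $C$ forces $\si_d|_{G'}$ to be non-injective), but you candidly flag the real problem yourself: Case 2 is not a proof, it is a plan. After you locate a non-critical leaf $\ell=\ol{uv}\subset p^{-1}(X)$ crossing $\ol{ab}$ with $\si_d(u),\si_d(v)\notin C$, there is no mechanism in your argument that actually produces a shorter admissible pair or pushes the configuration into a Fatou gap; you simply assert that ``further analysis'' will do one or the other. Crossing one more leaf does not shrink your extremal arc $H$ (the endpoints $u,v$ need not map into $C$, so they are not eligible replacements for $a$ or $b$), and the chord $\ol{ab}$ can cross infinitely many leaves of $\lam_\sim$ without ever entering a gap with nonempty interior. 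So the induction/descent you are gesturing at has no well-founded measure, and the argument does not terminate.

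The paper sidesteps this entirely by working in $J$ rather than in the disk: it joins $x$ and $y$ by an arc $I\subset X$, observes that $f(I)$ cannot be a dendrite (a folding of an arc onto a tree forces a critical point of $f|_I$), extracts a minimal subarc $I'$ whose image $f(I')$ is a Jordan curve, and then uses unshieldedness of $J$ to conclude that $f(I')$ must bound a Fatou domain $U$. The contradiction then comes in two clean cases: either $I'$ straddles several components of $f^{-1}(U)$, forcing a critical point at a transition, or $I'$ lies on the boundary of a single component $V$, making $\bd(V)\subset X$ a critical atom. The key geometric facts the paper leans on -- that a continuous, non-locally-injective map of an arc produces a critical point, and that a simple closed curve in an unshielded continuum must bound a complementary component -- are exactly what is missing from the lamination-level bookkeeping you attempted. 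If you want to keep your lamination-side approach, you would need to replace the vague ``further analysis'' by an actual decreasing quantity (e.g.\ a count of critical chords separating $A$ from $B$, or an appeal to the structure of $\lam$-sets between $\ch(A)$ and $\ch(B)$), and it is not clear this is simpler than the paper's direct topological argument.
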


\begin{proof}
Otherwise choose points $x, y\in X$ with $f(x)=f(y)$ and connect $x$
and $y$ with an arc $I\subset X$. If there are no Fatou domains with
boundaries in $X$, then $I$ is unique.
Otherwise for each Fatou domain $U$ with $\bd(U)\subset X$, separating $x$ from $y$ in $X$,
there are two points $i_U, t_U\in \bd(U)$, each of which separates
$x$ from $y$ in $X$, such that $I$ must contain one of the two
subarcs of $\bd(U)$ with endpoints $i_U, t_U$. Note that $f(I)$ is
not a dendrite since otherwise there must exist a critical point of
$f|_I$.
Hence we can choose a minimal subarc $I'\subset I$ so that $f(I')$
is a closed Jordan curve (then $f|_{I'}$ is one-to-one except for
the endpoints $x'$, $y'$ of $I'$ mapped into the same point).

It follows that $f(I')$ is the boundary of a Fatou domain $U$
(otherwise there are points of $J$ ``shielded'' from infinity by points
of $f(I')$ which is impossible).
The set $f^{-1}(U)$ is a finite union of Fatou domains, and
$I'$ is contained in the boundary of $f^{-1}(U)$.
Therefore, there are two possible cases. Suppose that $I'$ is a finite concatenation of at least two arcs, each
arc lying on the boundary of some component of $f^{-1}(U)$. Then,
as $I'$ passes from one boundary to another, it must pass
through a critical point, a contradiction. Second, $I'$ is contained
in the boundary of a single component $V$ of $f^{-1}(U)$ which implies that $\bd(V)\subset X$ is a critical atom,
a contradiction.
\end{proof}

Clearly, (pre)critical atoms are dense. In fact, they are also dense
in a stronger sense. To explain this, we need the following
definition. For a topological space $X$, a set $A\subset X$ is
called \emph{continuum-dense} (or {\em condense}) in $X$ if $A\cap Z
\ne \0$ for each non-degenerate continuum $Z\subset X$ (being
non-degenerate means containing more than one point). The notion was
introduced in \cite{bot06} in a different context.

Also, let us introduce a relative version of the notion of a critical atom.
Namely, let $X\subset J$ be a complete continuum.
A \emph{point} $a\in X$ is \emph{critical with respect to $X$} if in an neighborhood
$U$ of $a$ in $X$ the map $f|_U$ is not one-to-one.
An \emph{atom} $A\subset X$ is \emph{critical with respect to $X$} if it is either a
critical point with respect to $X$ or a Fatou atom $A\subset X$ of degree greater
than 1.

\begin{lem}\label{l:condens}
Let $I\subset X\subset J$ be two non-degenerate continua
such that $X$ is complete and invariant. Suppose that $X$ is not a Siegel atom.
Then $f^n(I)$ contains a critical atom of $f|_X$ for some $n$.
Thus, (pre)critical atoms are condense in $J$ (every continuum in $J$ intersects
a (pre)critical atom).
\end{lem}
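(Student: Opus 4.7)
The plan is a contradiction argument driven by the uniform expansion of $\si_d$ on $\uc$. Suppose no iterate $f^n(I)$ contains a critical atom of $f|_X$. Lift $I$ by setting $\tilde I=p^{-1}(I)\subset\uc$. The key split is whether $\tilde I$ contains a non-degenerate arc of $\uc$ or is totally disconnected.

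If $\tilde I$ contains an arc $A\subset\uc$, then since $\si_d$ multiplies arc lengths by $d$ wherever it is one-to-one, there exists a smallest $n_0$ with $\si_d^{n_0}(A)=\uc$; hence
\[
f^{n_0}(I)\supset p(\si_d^{n_0}(A))=p(\uc)=J\supset X.
\]
I next claim that $X$ itself contains a critical atom of $f|_X$. If not, $X$ would contain no global critical atoms (any such being also critical with respect to $X$), so Lemma~\ref{l:easytop} would force $f|_X$ to be injective on the complete invariant non-degenerate continuum $X$, and this rigidity in turn forces $X$ to be a Siegel atom, contradicting the hypothesis. (The final implication uses the fact that a non-degenerate complete invariant continuum with one-to-one dynamics has a totally disconnected lift, and the only such objects arising from an invariant lamination are bases of periodic Siegel gaps.) So $f^{n_0}(I)\supset X$ contains a critical atom of $f|_X$, the desired contradiction.

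If $\tilde I$ is totally disconnected, then $I$, being a non-degenerate continuum, is forced to lie in a single Fatou atom $A\subset X$, and $I$ is a non-degenerate subarc of the Jordan curve $A$. Indeed, the only way $p$ can identify a totally disconnected set into a non-degenerate continuum within this framework is via the $\sim$-identifications living on the basis of a single Fatou gap $G$, and $A=p(\bd G)$. If $A$ has degree $k>1$, then $A$ is itself a critical atom of $f|_X$, the return map on $A$ is conjugate via $\psi_G$ to the expanding $\si_k$, and so some iterate $f^m(I)$ covers $A$, a contradiction. If $A$ is a Siegel atom, then $X$ not being a Siegel atom forces $X\supsetneq A$; a local analysis at the points where $X\setminus A$ attaches to $A$, using the invariance of $X$ together with the irrational rotational dynamics on $A$, produces a critical point of $f|_X$ on $A$ (the rotational structure of $A$ cannot accommodate globally one-to-one dynamics in the presence of external invariant branches), and density of rotation orbits places such a point inside some $f^m(I)$.

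For the condensity statement, apply the main claim to $X=J$ (handling the trivial case when $J$ is itself a Jordan curve/Siegel atom separately). For any non-degenerate continuum $I\subset J$, some $f^n(I)$ contains a critical atom $C$; taking $x\in I$ with $f^n(x)\in C$ and letting $B$ be the atom of $J$ containing $x$ gives $f^n(B)\subset C$, so $B$ is (pre)critical and meets $I$. The main obstacle is the Siegel subcase in Case~2: rigorously producing critical points of $f|_X$ on a Siegel atom $A$ when $X\supsetneq A$ requires a careful local analysis of the attachment geometry between $A$ and the branches of $X\setminus A$, showing that the irrational rotation of $A$ is incompatible with everywhere locally injective dynamics on $X$. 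The expansion cases (the arc lift and the degree-$k>1$ Fatou subcase) are clean and rely only on Lemma~\ref{l:easytop} and the expanding nature of $\si_k$.
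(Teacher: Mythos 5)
Your case split on $\tilde I=p^{-1}(I)$ is clean but the two branches hide (resp.\ break down at) precisely the hard part of the argument, which is the dendritic scenario. In Case~1 you correctly get $f^{n_0}(I)=J\supset X$, but you then reduce to the claim that a non-degenerate complete invariant continuum $X$ with $f|_X$ one-to-one must be a Siegel atom, and you offer only a heuristic for this. That claim is true, but proving it \emph{is} the substantive content of the paper's argument: the paper shows that the forward-orbit closure $\ol L$ is an invariant dendrite, invokes the theorem from \cite{bfmot10} that every invariant dendrite in $J_\sim$ contains infinitely many periodic cutpoints, and from there derives a contradiction (a self-homeomorphism of a closed invariant arc with fixed endpoints would have a one-sidedly attracting fixed point, impossible in a repelling quotient of $\si_d$). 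You cannot just assert the consequence; your Case~1 is therefore circular in spirit, assuming what needs to be proved.

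Case~2 is simply wrong. It is false that $\tilde I$ totally disconnected forces $I$ to be a subarc of a Fatou atom. If $J$ is a dendrite (no bounded Fatou domains at all), $\tilde I=p^{-1}(I)$ for an arc $I$ is typically a Cantor set: whenever branch points of $J$ are dense on $I$, the angles feeding the side branches produce a dense family of gaps in $\tilde I$, so $\tilde I$ is nowhere dense and hence totally disconnected, yet there is no Fatou gap $G$ in sight. (The Hurewicz dimension-raising theorem shows there is no topological obstruction here: a closed finite-to-one map can raise dimension, so a Cantor lift mapping onto an arc is perfectly possible without any Fatou gap.) Thus your Case~2, which is supposed to reduce everything to $\bd G$ and then split by degree/Siegel, misses the dendritic continua entirely --- and those are exactly the continua where the paper needs the periodic-cutpoints theorem.

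By contrast, the paper's proof does not look at the lift at all. It first disposes of the Fatou-boundary situation (if $I$ meets some $\bd U$ in more than a point, push the arc forward to a periodic Fatou boundary and read off a critical atom or critical point on the Siegel circle), and then, in the remaining case, shows $\ol{\bigcup_k f^k(I)}$ is an invariant dendrite and uses the periodic cutpoint theorem. If you want to keep your framework, Case~2 has to be replaced: the totally disconnected lift does not imply $I$ sits on a Fatou boundary, and you must confront the invariant-dendrite situation directly, which brings you back to the argument the paper actually gives.
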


\begin{proof}
If $I$ contains more than one point of the boundary $\bd(U)$ of a Fatou
domain $U$, then it contains an arc $K\subset \bd(U)$. As all Fatou
domains are (pre)periodic, $K$ maps eventually to a subarc $K'$ of
the boundary $\bd(V)$ of a periodic Fatou domain $V$. If $V$ is of degree greater
than $1$, then eventually $K'$ covers $\bd(V)$; since in this case $\bd(V)$
is a critical atom of $f|_X$, we are done. If $V$ is Siegel, then
eventually $K'$ covers all points of $\bd(V)$. Since we assume that $X$ is not
a Siegel atom, it follows that a critical point of $f|_X$ belongs to $\bd(V)$,
and again we are done. Thus, we may
assume that $I$ intersects boundaries of Fatou domains at most in single points.

By \cite{blolev02a} $I$ is not wandering;
we may assume that $I\cap
f(I)\ne \0$. Set $L=\bigcup^\infty_{k=0} f^k(I)$. Then $L$ is connected
and $f(L)\subset L$. If $\ol L$ contains a Jordan curve $Q$, then we
may assume that $Q$ is the boundary of an invariant Fatou domain.
If $L\cap Q=\0$, then $L$ is in a single component of $J\sm Q$,
hence $\ol L\cap Q$ is at most one point, a contradiction.
Choose the smallest $k$ with $f^k(I)\cap Q\ne \0$; by the assumption
$f^k(I)\cap Q=\{q\}$ is a singleton. Since the orbit of $I$ cannot
be contained in the union of components of $J\sm \{q\}$, disjoint from
$Q$, then there exists $m$ with $f^m(I)\cap Q$ not a singleton, a
contradiction.

Hence $\ol L$ is an invariant dendrite, and all cutpoints of $\ol L$
belong to images of $I$. Suppose that no cutpoint of $\ol L$ is
critical. Then $f|_{\ol L}$ is a homeomorphism (if two points of $\ol
L$ map to one point, there must exist a critical point in the open arc
connecting them). Now, it is proven in \cite{bfmot10} that if $D\subset
J$ is an invariant dendrite, then it contains infinitely many periodic
cutpoints. Hence we can choose two points $x, y\in \ol L$ and a number
$r$ such that $f^r(x)=x, f^r(y)=y$. Then the arc $I'\subset \ol L$
connecting $x$ and $y$ is invariant under the map $f^r$ which is
one-to-one on $I'$. Clearly, this is impossible inside $J$
(it is easy to see that a self-homeomorphism of an interval
with fixed endpoints and finitely many fixed points overall must have a fixed point
attracting from at least one side which is impossible in $J$).
\end{proof}

Let $\ell$ be a leaf of $\lam_\sim$. We equip $\lam_\sim$ with the
topology induced by the Hausdorff metric. Then $\lam_\sim$ is a
compact and metric space. Suppose that a leaf $\ell$ has a neighborhood
which contains at most countably many \emph{non-degenerate} leaves of $\lam_\sim$. Call such
leaves \emph{\textbf{c}ountably \textbf{i}solated} and denote the
family of all such leaves $\ci_\sim=\ci$. Clearly, $\ci$ is open in
$\lam_\sim$. Moreover,  $\ci$ is countable. To see this note that
$\ci$, being a subset of  $\lam_\sim$, is second countable and,
hence, Lindel\"of. Hence there exists a countable cover of $\ci$ all
of whose elements are countable and $\ci$ is countable as desired.
In terms of dynamics, $\ci$ 
is backward invariant and almost forward
invariant (except for critical leaves which may belong to $\ci$ and
whose images are points).

It can be shown that if we remove all leaves of $\ci$ from
$\lam_\sim$ (this is in the spirit of \emph{cleaning} of geometric
laminations \cite{thu85}), the remaining leaves form an invariant
geometric lamination $\lam^c_\sim$ (``c'' coming from ``countable
cleaning''). One way to see this is to use an alternative definition
given in \cite{bmov11}. A geo-lamination (initially not necessarily
invariant in the sense of Definition~\ref{geolaminv}) is called
\emph{sibling $d$-invariant lamination} or just \emph{sibling
lamination} if (a) for each $\ell\in\mathcal{L}$ either
$\sigma_d(\ell)\in\mathcal{L}$ or $\sigma_d(\ell)$ is a point in
$\uc$, (b) for each $\ell\in\mathcal{L}$ there exists a leaf
$\ell'\in\mathcal{L}$ with $\sigma_d(\ell')=\ell$, and (c) for each
$\ell\in\mathcal{L}$ with non-degenerate image $\sigma_d(\ell)$
there exist $d$ disjoint leaves $\ell_1, \dots, \ell_d$ in
$\mathcal{L}$ with $\ell=\ell_1$ and $\sigma_d(\ell_i) =
\sigma_d(\ell)$ for all $i$. By \cite[Theorem 3.2]{bmov11}, sibling
invariant laminations are invariant in the sense of
Definition~\ref{geolaminv}. Now, observe that $\lam_\sim$ is sibling
invariant. Since $\ci$ is open and contains the full grand orbit of
any leaf in it, $\lam^c_\sim$ is also sibling invariant. Thus, by
\cite[Theorem 3.2]{bmov11} $\lam^c_\sim$ is invariant in the sense
of Definition~\ref{geolaminv}. Infinite gaps of $\lam^c_\sim$ are
called \emph{super-gaps of $\sim$}.  Note that all finite gaps of
$\lam^c_\sim$ are also finite gaps of $\lam_\sim$.



Let $\lam^0_\sim=\lam_\sim$ and define $\lam^k_\sim$ inductively by
removing all isolated leaves from  $\lam^{k-1}_\sim$.

\begin{lem}\label{n=c}
There exists $n$ such that $\lam^n_\sim=\lam^c_\sim$; moreover, $\lam^c_\sim$
contains no isolated leaves.
\end{lem}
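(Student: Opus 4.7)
The plan is to handle the two assertions in three short steps; the first two are easy and the last is the main obstacle.

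First, I would record the basic properties of $\ci$ implicit in the paragraph preceding the lemma: $\ci$ is open in $\lam_\sim$ (if $U$ is a Hausdorff neighborhood witnessing countable isolation of a leaf $\ell$, then the same $U$ witnesses countable isolation of every leaf it contains) and $\ci$ is countable (by the Lindel\"of cover argument sketched in the text, restricted to non-degenerate leaves). Consequently $\lam^c_\sim=\lam_\sim\setminus\ci$ is closed. Then I would settle the ``moreover'' clause: if some $\ell\in\lam^c_\sim$ were isolated in $\lam^c_\sim$, pick an open Hausdorff neighborhood $U$ of $\ell$ with $U\cap\lam^c_\sim=\{\ell\}$; then $U\cap\lam_\sim\subseteq\{\ell\}\cup\ci$ is at most countable, so $\ell$ itself is countably isolated, i.e.\ $\ell\in\ci$, contradicting $\ell\in\lam^c_\sim$.

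Second, I would prove by induction on $k$ that $\lam^c_\sim\subseteq\lam^k_\sim$. The base case $k=0$ is immediate. For the induction step, any leaf removed at stage $k$ is by definition isolated in $\lam^{k-1}_\sim\supseteq\lam^c_\sim$; if it also lay in $\lam^c_\sim$, it would be isolated in the subspace $\lam^c_\sim$, contradicting the ``no isolated leaves'' property just established. Thus the descent $\{\lam^k_\sim\}$ is bounded below by $\lam^c_\sim$.

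Third, I would identify the stable value. Once the descent reaches an index $n$ with $\lam^{n+1}_\sim=\lam^n_\sim$, the set $\lam^n_\sim$ is closed with no isolated leaves, hence a perfect Polish subspace of $\lam_\sim$, and in a nonempty perfect Polish space every nonempty relatively open set is uncountable. But any $\ell\in\lam^n_\sim\cap\ci$ would supply, via the witnessing neighborhood $U$, a countable nonempty relatively open subset $U\cap\lam^n_\sim$ of $\lam^n_\sim$, a contradiction. Therefore $\lam^n_\sim\cap\ci=\emptyset$ and, combined with $\lam^c_\sim\subseteq\lam^n_\sim$, this yields $\lam^n_\sim=\lam^c_\sim$. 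The main obstacle is to show that the descent stabilizes at a \emph{finite} $n$, since a~priori the Cantor--Bendixson process on a compact metric space may require transfinite iteration. To force finiteness I would try to exploit the compactness of $\lam_\sim$ together with the $\si_d$-invariance, so that the accumulation rank propagates along $\si_d$-orbits and is uniformly bounded; alternatively, if $n$ is read as an ordinal, the argument above already gives stabilization at a countable ordinal via transfinite induction, after which the identification with $\lam^c_\sim$ goes through verbatim.
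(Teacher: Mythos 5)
Your steps 1 and 2 are sound, and the argument you give in step 3 for identifying the stable value with $\lam^c_\sim$ once stabilization is achieved is essentially the same Baire-category/perfect-set idea the paper uses (modulo the small point that you should shrink the neighborhood to exclude degenerate leaves, which cannot accumulate on a non-degenerate leaf in the Hausdorff metric, before invoking the perfect-Polish-is-uncountable fact). The piece of the argument you correctly flag as the ``main obstacle,'' however, is left genuinely open, and it is precisely the part the paper's proof is mostly devoted to.

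The Cantor--Bendixson process on a compact metric space can indeed require transfinitely many steps, so compactness of $\lam_\sim$ alone cannot force finite termination; and the transfinite fallback you mention does not prove the lemma as stated, since the lemma asserts a \emph{finite} $n$. What you are missing is a quantitative, dynamically meaningful monotone invariant. The paper observes that each cleaning step can only \emph{decrease} two finite counts: the number of infinite periodic gaps of $\lam^i_\sim$ and the number of finite critical objects (gaps or leaves) of $\lam^i_\sim$. Choosing $m$ minimizing these counts, one then shows by a case analysis on a hypothetical isolated leaf $\ell$ of $\lam^m_\sim$ (it must be a common edge of an infinite gap $U$ and a finite gap $V$; $V$ cannot be infinite, nor can any other edge of $V$ become isolated later, or the counts would drop again) that $\lam^{m+1}_\sim$ already has no isolated leaves. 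Your proposed route of ``propagating accumulation rank along $\si_d$-orbits'' could plausibly be made to work, but as written it is a heuristic, not a proof: you would still need to exhibit a uniform bound, and the natural candidate for such a bound is exactly the counting argument the paper supplies. Without that, the central claim of the lemma is unestablished.
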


\begin{proof}We first show that there exists $n$ such that
$\lam^{n+1}_\sim=\lam^n_\sim$ (i.e., $\lam^n_\sim$ contains no
isolated leaves).  We note that increasing $i$ may only decrease the
number of infinite periodic gaps and decrease the number of finite
critical objects (gaps or leaves) in $\lam^i_\sim$. Therefore we may
choose $m$ so that $\lam^m_\sim$ has a minimal number of infinite
periodic gaps and finite critical objects. If $\lam^m_\sim$ has no
isolated leaf, then we may choose $n=m$. Otherwise, we will show
that we may choose $n=m+1$.

Suppose that $\lam^m_\sim$ has an isolated leaf $\ell$. Then $\ell$
is a common edge of two gaps $U$ and $V$. Since finite gaps of
$\lam_\sim$ (and, hence,  of all $\lam^i_\sim$) are disjoint we may
assume that $U$ is infinite. Moreover, it must be that $V$ is
finite. Indeed, suppose that $V$ is infinite and consider two cases.
First assume that there is a minimal $j$ such that $\si_d^j(U)=\si_d^j(V)$.
Then $\si_d^{j-1}(\ell)$ is critical and isolated in $\lam^m_\sim$.
Hence $\lam^{m+1}_\sim$ has fewer finite critical objects than
$\lam^m_\sim$, a contradiction with the choice of $m$. On the other
hand, if $U$ and $V$ never have the same image, then in
$\lam^{m+1}_\sim$ their periodic images will be joined. Then
$\lam^{m+1}_\sim$ would have a periodic gap containing both such
images which contradicts the choice of $m$ with the minimal number
of infinite periodic gaps. Thus, $V$ is finite.

Furthermore, the other edges of $V$ are not isolated in any lamination
$\lam^t_\sim, t\ge m$.  For if some
other edge $\bq$ of $V$ were isolated in $\lam^t_\sim$, then it would be an edge of
an infinite gap $H$, contained in the same gap of
$\lam^{t+1}_\sim$ as $U$. If $\si_d^r(H)=\si_d^r(U)$ for
some $r$, then $V$ is (pre)critical and the critical image of $V$ is
absent from $\lam^{t+1}_\sim$, a contradiction with the choice of
$m$. Otherwise, periodic images of $H$ and $U$ will be contained in
a bigger gap of $\lam^t_\sim$, decreasing the number of periodic
infinite gaps and again contradicting the choice of $m$. By definition this implies that
all edges of $V$ except for $\ell$ stay in all laminations $\lam^t_\sim, t\ge m$
and that $\lam^{m+1}_\sim$ has no isolated leaves.

Choose $n$ so that $\lam^n_\sim$ contains
no isolated leaves. It remains to be shown that
$\lam^n_\sim=\lam^c_\sim$.  Let $\ci''$ denote the union of all
leaves which are removed from $\lam_\sim$ to obtain $\lam^n_\sim$.
Clearly $\ci''\subset\ci$. Suppose that $N=\ci\sm\ci''\ne\0$. Note
that $\ci$ is an open subset of the compact metric space $\lam_\sim$
equipped with the Hausdorff topology. Hence $\ci$ is topologically
complete. Since $\ci''$ is countable, $N$ is a $G_\delta$ subset of
the complete space $\ci$ and, therefore, a Baire space. This
contradicts the fact that $\ci$ is countable.
\end{proof}

\begin{lem}\label{l:leafinsgap}
If $\sim$ be a lamination, then the following holds.

\begin{enumerate}

\item Every leaf of $\lam_\sim$ inside a super-gap $G$ of $\sim$ is
(pre)periodic or (pre)critical; every edge of a super-gap
is (pre)periodic.

\item Every edge of any gap $H$ of $\lam^c_\sim$ is  not
isolated in $\lam^c_\sim$ from outside of $H$; all gaps of
$\lam^c_\sim$ are pairwise disjoint. Moreover, gaps of
$\lam^c_\sim$ are disjoint from leaves which are not their edges.

\item There are no infinite concatenations of leaves in $\lam^c_\sim$.
Moreover, the geo-lamination $\lam^c_\sim$ gives rise to a
lamination $\sim^c$ such that the only difference between
$\lam^c_\sim$ and $\lam_{\sim^c}$ is as follows: it is possible that
one edge of certain finite gaps of $\sim^c$ is a leaf passing inside
an infinite gap of $\lam^c_\sim$.

\item All periodic Siegel gaps are properly contained in super-gaps.

\end{enumerate}

\end{lem}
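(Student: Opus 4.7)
The plan is to address each of the four parts in turn, relying on the structural description of $\lam^c_\sim$ given in Lemma~\ref{n=c} and on the dynamical constraints from Lemma~\ref{l:maj}.

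For part~(1), since super-gaps are infinite gaps of the invariant geo-lamination $\lam^c_\sim$, Kiwi's theorem (quoted before Definition~\ref{d:degreek}) forces them to be (pre)periodic. Lemma~\ref{l:maj}(3) then yields that every edge is (pre)periodic or (pre)critical; a (pre)critical edge would eventually map to a critical leaf on the boundary of a periodic super-gap, and such a leaf would be isolated in $\lam_\sim$ (the super-gap side has no leaves, and accumulation from the opposite side would contradict~(D3) because the two endpoints of $\ell$ identify on the next iterate), so it would have been discarded by the cleaning, contradicting that it is an edge of $\lam^c_\sim$. For a leaf $\ell\in\lam_\sim$ lying strictly inside a super-gap $G$, the leaf $\ell$ was removed by the cleaning, so it belongs to some $\lam^{k-1}_\sim\sm\lam^k_\sim$; the almost forward invariance of $\ci$ keeps the orbit of $\ell$ in the countable removed set unless the orbit collapses to a point. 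I will then argue via Hausdorff compactness of $\lam_\sim$ that an infinite non-(pre)critical orbit has an accumulation leaf $\ell^*\in\lam^c_\sim$, and using the $\si_d^n$-self-similarity of $G$ (where $n$ is its period), the orbit must revisit neighborhoods of $\ell^*$ repeatedly, which together with isolation of each orbit element in $\lam_\sim$ forces the orbit to cycle, making $\ell$ (pre)periodic.

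For parts~(2) and~(3), Lemma~\ref{n=c} provides the key fact that $\lam^c_\sim$ has no isolated leaves. For~(2), accumulation onto an edge $\ell$ of a gap $H$ cannot come from the empty interior of $H$, so it must come from outside; two gaps sharing an edge would produce an isolated leaf, and a non-edge leaf meeting $\bd(H)$ would be forced into the empty interior of $H$---both contradictions. For~(3), an infinite concatenation $\ell_1,\ell_2,\dots$ in $\lam^c_\sim$ with $\ell_i$ and $\ell_{i+1}$ sharing an endpoint accumulates at a point from which infinitely many leaves of $\lam_\sim$ emanate, contradicting the finiteness of $\sim$-classes (axiom~(D4)). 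Once this is established, I define $\sim^c$ by declaring $x\sim^c y$ whenever $x,y$ lie on a common leaf of $\lam^c_\sim$ or on the boundary of a common finite gap of $\lam^c_\sim$, then take the transitive closure; the absence of infinite concatenations keeps every class finite, and axioms~(E1)--(E3) together with (D1)--(D4) follow from the corresponding properties of $\lam^c_\sim$. By construction, $\lam_{\sim^c}$ differs from $\lam^c_\sim$ only through chords added to close off finite $\sim^c$-classes whose endpoints lie on the boundary of an infinite gap of $\lam^c_\sim$, exactly as the statement describes.

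For part~(4), let $G$ be a periodic Siegel gap of $\sim$. By the remark after Definition~\ref{d:crit}, some iterate $\si_d^j(G)$ has a critical leaf $\ell$ on its boundary. The side of $\ell$ facing $\si_d^j(G)$ is leaf-free, while on the opposite side leaves of $\lam_\sim$ cannot accumulate to $\ell$ without violating~(D3), since the two endpoints of $\ell$ identify on the next iterate. Hence $\ell$ is isolated in $\lam_\sim$, lies in $\ci$, and is removed in the cleaning; the super-gap of $\lam^c_\sim$ containing $\si_d^j(G)$ is therefore strictly larger than $\si_d^j(G)$, and pulling back by $\si_d^j$ gives the claim for $G$. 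The hardest step is the dynamical argument in part~(1)---showing that an infinite orbit of an isolated leaf inside a super-gap must cycle---since this requires combining Hausdorff compactness of $\lam_\sim$ with the (pre)periodic dynamics on the super-gap, whereas the other parts reduce cleanly to Lemma~\ref{n=c} and the axioms of an invariant lamination.
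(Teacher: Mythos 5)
Your handling of parts (2)--(4) is broadly in line with the paper's proof, but your argument for part (1) has a genuine gap and bypasses the paper's key mechanism.

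For the first half of (1) (every leaf strictly inside a super-gap is (pre)periodic or (pre)critical), the paper's argument is short and direct: such a leaf $\ell$ is isolated in some intermediate geo-lamination $\lam^{k}_\sim$, and since two finite gaps of $\lam^k_\sim$ are never adjacent, $\ell$ must be an edge of an \emph{infinite} gap of $\lam^{k}_\sim$; Lemma~\ref{l:maj}(2)--(3) then gives the dichotomy immediately. Your proposal instead gestures at Hausdorff compactness and ``$\si_d^n$-self-similarity'' of the super-gap to force an infinite orbit to cycle, but this is never carried out, and it is not clear that it can be: an isolated leaf of $\lam^{k}_\sim$ need not revisit a neighborhood of any particular accumulation leaf $\ell^*$ under the return map of $G$, and the ``almost forward invariance of $\ci$'' only tells you the orbit stays in a countable set, not that it is eventually periodic. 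The observation that finite gaps are never adjacent, so isolated leaves sit on infinite gaps, is the missing idea your proof needs.

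For the second half of (1) (super-gaps have no critical edges), your reasoning that a critical edge of a super-gap ``would be isolated in $\lam_\sim$'' is not correct. On the super-gap side of such a critical edge $\ell$, $\lam_\sim$ generally \emph{does} have leaves (that is precisely what a super-gap consists of: a concatenation of gaps of $\lam_\sim$ together with the intervening leaves), and on the outer side nothing in (D3) forbids leaves accumulating onto a critical leaf---(D3) constrains the map on a single $\sim$-class, not accumulation behavior of unrelated leaves, and leaves accumulating to a critical chord simply have images collapsing to a point, which is allowed. The paper's argument is different and avoids this: by Lemma~\ref{n=c}, every edge $\ell$ of a gap $H$ of $\lam^c_\sim$ is approached by leaves $\ell_i$ \emph{from outside $H$}; if $\ell$ were critical, the images $\si_d(\ell_i)$ would shrink to the point $\si_d(\ell)$ and separate it from the rest of the circle, forcing $\si_d(H)$ to be that single point---impossible for an infinite (super-)gap. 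You should replace the isolation claim with this collapse argument.

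One smaller remark: in part (2) the paper's reason that gaps of $\lam^c_\sim$ are pairwise disjoint and disjoint from other leaves is that no point of $\uc$ is an endpoint of more than two leaves of $\lam_\sim$ (a consequence of $\lam_\sim$ coming from an equivalence relation), hence the same holds in $\lam^c_\sim\subset\lam_\sim$. Your statement that ``two gaps sharing an edge would produce an isolated leaf'' reaches the same conclusion, but the bound on leaves per vertex is the cleaner justification and is what also rules out a non-edge leaf meeting a gap.
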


\begin{proof}
(1) An isolated leaf $\ell$ of any lamination $\lam^k_\sim$ is either
(pre)periodic or (pre)critical. Indeed, since two finite gaps of
$\lam^k_\sim$ are not adjacent, $\ell$ is an edge of an infinite gap
$V$. Then by Lemma~\ref{l:maj}, $\ell$ is (pre)critical or
(pre)periodic. Since in the process of constructing $\lam^c_\sim$ we
remove isolated leaves of laminations $\lam^k_\sim$, we conclude
that all leaves inside a super-gap of $\sim$ (i.e., a gap of
$\lam^c_\sim$) are either (pre)periodic or (pre)critical.

Let $G$ be a gap of $\sim$.
Since by Lemma~\ref{n=c} $\lam^c_\sim$ contains no isolated leaves, every edge of
$G$ is a limit of leaves from outside of $G$. Hence
the only gaps of $\lam^c_\sim$ which can contain a critical leaf in their
boundaries are those which collapse to a single point (if a gap
$H$ of $\lam^c_\sim$ has a critical edge $\ell=\lim\ell_i$, then leaves
$\si_d(\ell_i)$ separate the point $\si_d(\ell)$ from the rest of the circle
which implies that $\si_d(H)=\si_d(\ell)$ is a point). Thus, super-gaps
have no critical edges, and by Lemma~\ref{l:maj} all their edges are
(pre)periodic.

(2) Every edge of any gap $H$ of $\lam^c_\sim$ is  not isolated in
$\lam^c_\sim$ from outside of $H$ by Lemma~\ref{n=c}. Moreover, no
point $a\in \uc$ can be an endpoint of more than two leaves of
$\lam_\sim$; hence, no point $a\in \uc$ can be an endpoint of more
than two leaves of $\lam^c_\sim$. We conclude that all gaps of
$\lam^c_\sim$ are pairwise disjoint and that all gaps of
$\lam^c_\sim$ are disjoint of all leaves which are not their edges.

(3) There are no infinite concatenations of leaves in $\lam^c_\sim$
because there are no such concatenations in $\lam_\sim$. Now
it is easy to see that the geometric lamination $\lam^c_\sim$ gives
rise to a lamination (equivalence relation) which we denote
$\sim^c$. Two points $a, b\in \uc$ are said to be
\emph{$\sim^c$-equivalent} if there exists a finite concatenation of leaves
of $\lam^c_\sim$ connecting $a$ and $b$. Since all leaves of $\lam^c_\sim$ are non-isolated,
it follows that the
geo-lamination $\lam^c_\sim$ and the geo-lamination
$\lam_{\sim^c}$ associated to $\sim^c$ can only differ as claimed.

(4) Clearly a Siegel gap $U$ is contained in a super-gap.  To see that it does not coincide
with a super-gap, observe that
there exists a $k$ such that $\si_d^k(U)$ has a critical edge.  Then
by (1), $\si_d^k(U)$ (and hence $U$ itself) is \emph{properly} contained in
a super-gap.
\end{proof}

Lemma~\ref{l:no1side} rules out certain dynamical behaviors of points.

\begin{lem}\label{l:no1side}
There exists no non-(pre)critical non-degenerate $\sim$-class $X$ such
that for every $n$, \emph{all} the sets $f^{n+k}(X), k>0$ are contained
in the same hole of $f^n(X)$.
\end{lem}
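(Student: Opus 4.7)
Assume for contradiction that such a $\sim$-class $X$ exists; set $X_n=\si_d^n(X)$, let $H_n=(a_n,b_n)$ be the unique hole of $X_n$ containing every $X_{n+k}$ with $k>0$, and let $\ell_n=\ol{a_n b_n}$ be the corresponding edge of $\ch(X_n)$. The first reduction is to observe that $X$ cannot be (pre)periodic: if $X_m=X_{m+p}$ for some $p\ge 1$, the hypothesis at index $m$ forces $X_{m+p}\subset H_m$, which is disjoint from $X_m$, a contradiction. Hence $X$ is wandering, the classes $X_n$ are pairwise disjoint, and by the non-(pre)critical assumption every $X_n$ is non-degenerate of constant cardinality $|X|\ge 2$.

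The key dynamical identity I would next establish is: whenever $|H_n|<1/d$, one has $H_{n+1}=\si_d(H_n)$, and in particular $|H_{n+1}|=d\,|H_n|$ and $\ell_{n+1}=\si_d(\ell_n)$. Indeed, by (D3) the positively oriented arc $(\si_d(a_n),\si_d(b_n))$ is a hole of $X_{n+1}$, call it $\si_d(H_n)$; and $|H_n|<1/d$ makes $\si_d|_{\ol{H_n}}$ injective, so $X_{n+2}=\si_d(X_{n+1})\subset \ol{\si_d(H_n)}$. Combined with $X_{n+2}\subset H_{n+1}$, the two holes of $X_{n+1}$ share the non-empty set $X_{n+2}$ and must coincide. (Non-criticality forbids $|H_n|=1/d$, which would render $\ell_n$ a critical leaf inside $X_n$.) Iterating, the lengths $|H_n|$ grow by the factor $d$ as long as they stay below $1/d$, so some first index $N\ge 0$ satisfies $|H_N|>1/d$; after replacing $X$ by $X_N$ (which still meets every hypothesis), we may assume $|H_0|>1/d$ from the start, so $\ell_0$ is a major edge of $X$.

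To conclude I would analyse the $\omega$-limit set $\Omega=\omega(X)\subset \uc$: it is a non-empty closed $\si_d$-invariant subset of $\bigcap_n \ol{H_n}\subset \ol{H_0}$, i.e., of a proper sub-arc of $\uc$. The aim is to force $\Omega\subset \mathrm{Fix}(\si_d)$. Once this is known, each $x\in X$ has $\{\si_d^n(x)\}$ accumulating on the finite repelling fixed-point set $\mathrm{Fix}(\si_d)$; the local $d$-fold expansion at each fixed point then forces $x$ to be eventually fixed, so $X$ is (pre)periodic, contradicting wandering.

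The hard part is the assertion $\Omega\subset\mathrm{Fix}(\si_d)$. Expansion alone is insufficient, since $\si_d$ admits nontrivial invariant Cantor subsets of proper sub-arcs of $\uc$. The extra input is the rigid recursion $H_{n+1}=\si_d(H_n)$ from the second paragraph (valid at every index with $|H_n|<1/d$, and which may be re-started after any shift $X\mapsto X_M$), the forbidden value $|H_n|\ne k/d$ coming from non-criticality, and the uniform cardinality $|X_n|=|X|$ of the iterates. A careful bookkeeping of the alternation between the ``sub-$1/d$'' phases (where $|H_{n+1}|=d\,|H_n|$ rigidly pins the arcs) and the intervening ``super-$1/d$'' phases should pinch $\Omega$ into the finite fixed-point set and close the argument.
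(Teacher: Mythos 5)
Your first two paragraphs are sound: the wandering reduction, the identity $H_{n+1}=\si_d(H_n)$ when $|H_n|<1/d$ (noting that distinct holes of $X_{n+1}$ containing the same non-empty set must coincide), and the resulting forced escalation of hole lengths to a major are all correct and are in the spirit of Lemma~\ref{l:maj}. The problem is the third paragraph, which is the heart of the matter: you state the target $\Omega\subset\mathrm{Fix}(\si_d)$, observe yourself that expansion alone cannot deliver it (indeed $\si_d$ has plenty of invariant Cantor sets inside short arcs), and then assert that ``a careful bookkeeping of the alternation between sub-$1/d$ and super-$1/d$ phases \emph{should} pinch $\Omega$ into the finite fixed-point set.'' That is a hope, not an argument, and it is exactly where the difficulty lies. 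It is also not clear that this is the right target: nothing in the hypothesis rules out $\omega(X)$ accumulating on a genuine periodic (not fixed) leaf, and indeed the paper's contradiction comes from a \emph{repelling periodic leaf}, not from fixed points. In addition, your containment $\Omega\subset\bigcap_n\ol{H_n}$ is true, but the $\ol{H_n}$ need not be nested (the hole of $X_{n+1}$ carrying the tail can be the one containing the complement of $H_n$), so this intersection is not the clean decreasing arc your sketch seems to rely on.

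The paper proceeds quite differently and the ingredients it uses are ones your plan misses. It first rules out critical leaves in the limit of the orbit of $X$ (a direct argument with the collapsing point), then invokes Childers' theorem \cite{chi04} to conclude that a wandering non-leaf class would have a critical leaf in its limit set, thereby reducing to the case that $X$ is a \emph{single leaf}. It then builds the nested convex sets $W_n=\ol{\bigcap_{i\le n} Q_i}$ and $W=\bigcap W_n$, shows $W$ cannot be a point or a leaf, and analyzes the boundary leaves of $W$: if one is a limit of images of $X$ it must be (pre)periodic and hence repelling (contradiction with images approaching it from inside its hole); if none is, orientation stabilizes and Lemma~\ref{l:maj} forces $X$ to be (pre)periodic or (pre)critical (contradiction). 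Without some analogue of the reduction to a leaf via \cite{chi04} and the $W$-boundary analysis, the gap in your third paragraph stands, and the claim $\Omega\subset\mathrm{Fix}(\si_d)$ remains unjustified.
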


\begin{proof}
Suppose that such $\sim$-class $X$ exists.
Let us show that then the iterated images of $X$ cannot converge (along a subsequence of iterations) to a
critical leaf $\ell$.
Indeed, suppose that $\si^{n_k}_d(X)\to \ell$ so that $\si_d^{n_{k+1}}(X)$ separates $\si_d^{n_k}(X)$ from $\ell$.
Then $\si^{n_k+1}_d(X)\to \si_d(\ell)$, where $\si_d(\ell)$ is a point of $\uc$.
Since $\si_d^{n_{k+1}+1}(X)$ separates $\si_d(\ell)$ from $\si_d^{n_k+1}(X)$ for a sufficiently large $k$,
it follows by the assumption that the entire orbit of $\si^{n_k}(X)$ must be contained in a small component of
$\ol{\disk}\sm \si^{n_k}(X)$, containing $\si_d(\ell)$.
As this can be repeated for all sufficiently large $k$, we see that the limit set of $X$ has to coincide
with the point $\si_d(\ell)$,
a contradiction. Hence $X$ contains no critical leaves in its limit set.

Note that the assumptions of the lemma imply that $X$ is wandering.
By \cite{chi04} if $X$ is not a leaf then it contains a critical leaf
in its limit set. This implies that $X$ must be a leaf. For each
image $\si^n_d(X)$ let $Q_n$ be the component of $\ol{\disk}\sm \si^n_d(X)$ containing the rest of the orbit of $X$.
Let $W_n=\ol{\bigcap^n_{i=0} Q_n}$.
Then $W_n$ is a set whose boundary consists of finitely many leaves-images of $X$ alternating with
finitely many circle arcs.
On the next step, the image $\si^{n+1}_d(X)$ of $X$ is contained in $W_n$, and becomes a leaf on
the boundary of $W_{n+1}\subset W_n$.

Consider the set $W=\bigcap W_n$.
For an edge $\ell$ of $W$, let $H_W(\ell)$ be the hole of $W$ behind $\ell$.
When saying that a certain leaf is contained in $H_W(\ell)$, we mean
that its endpoints are contained in $H_W(\ell)$, or that the leaf is
contained in the convex hull of $H_W(\ell)$.
If $W$ is a point or a leaf, then the assumptions on the dynamics of
$X$ made in the lemma imply that $X$ converges to $W$ but never maps to $W$.
Clearly, this is impossible.
Thus, we may assume that $W$ is a non-degenerate convex
subset of $\ol{\disk}$ whose boundary consists of leaves and
possibly circle arcs.
The leaves in $\bd(W)$ can be of two types: limits of sequences of
images of $X$ (if $\ell$ is a leaf like that, then images of $X$
which converge to $\ell$ must be contained in $H_W(\ell)$), and images
of $X$ (if $\ell$ is such a leaf then there are only finitely many
images of $X$ in $H_W(\ell)$).
It follows that the limit leaves
from the above collection form the entire limit set of $X$;
moreover, by the above there are no critical leaves among them.

Let us show that this leads to a contradiction. First assume that
among boundary leaves of $W$ there is a limit leaf $\bq=\ol{xy}$ of
the orbit of $X$ (here $(x, y)=H_W(X)$ is the hole of $W$ behind $\bq$).
Let us show that $\bq$ is (pre)periodic. Indeed, since $\bq$
is approached from the outside of $W$ by images of $X$, and since
all images of $X$ are disjoint from $W$, it follows that $(\si_d(x),
\si_d(y))$ is the hole of $W$ behind $\si_d(X)$. Then by
Lemma~\ref{l:maj} and because there are critical leaves on the
boundary of $W$ we see that $\bq$ is (pre)periodic. Let $\ell$ is an
image of $\bq$ which is periodic. As $\ell$ is approached from
within $H_W(\ell)$ by images of $X$ and because $\ell$ is a repelling
leaf, we see that images of $X$ approaching $\ell$ from within
$H_W(\ell)$, are repelled farther away from $\ell$ inside $H_W(\ell)$.
Clearly, this contradicts the properties of $X$.

Now assume that there are no boundary leaves of $W$ which are limits
of images of $X=\ol{uv}$. Then all boundary leaves of $W$ are images
of $X$ themselves. Let us show that then there exists $N$ such that
for any $i\ge N$ we have that if the hole $H_W(\ell)$ of $W$ behind
$\ell=\si_d^i(X)$ is $(s, t)$ then $H_W(\si_d(\ell)=(\si_d(s),
\si_d(t))$. Indeed, first we show that if $H_W(\ell)=(s, t)$ while
$H_W(\si_d(\ell)=(\si_d(t), \si_d(s))$, then $(s, d)$ contains a
$\si_d$-fixed point. To see that, observe that in that case
$\si_d$-image of $[s, t]$ contains $[s, t]$ and the images of $s, t$
do not belong to $(s, t)$. This implies that there exists a
$\si_d$-fixed point in $[s, t]$. Since there are finitely many
$\si_d$-fixed points, it is easy to see that the desired number $N$
exists. Now we can apply Lemma~\ref{l:maj} which implies that $X$ is
either (pre)periodic or (pre)critical, a contradiction.
\end{proof}

\begin{lem}
\label{l:corept-in-sgap}
If $X$ is a
persistent cut-atom of $J$ such that $p^{-1}(X)$
is a subset of some super-gap of $\sim$, then either $X$ is
the boundary of a Siegel domain, or $X$ is a (pre)periodic point
which eventually maps to a periodic cutpoint. In any case,
$X$ eventually maps to $\pc$.
\end{lem}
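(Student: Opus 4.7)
My plan is to split into cases based on the structure of $Y=p^{-1}(X)\subset G$, using Lemma \ref{l:leafinsgap}(1) to constrain leaves inside the super-gap $G$. The cases are: (A) $X$ is the boundary of a Siegel domain; (B) $X$ is the boundary of a Fatou domain of degree $k\ge 2$; (C) $X$ is a single point of $J$.

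Case (A) immediately gives the first alternative of the conclusion. In case (B), $Y$ is a Fatou gap of degree $k\ge 2$, which is (pre)periodic by \cite{kiwi02}, so its eventual periodic image $X_0=f^j(X)$ is a periodic Fatou cut-atom, hence lies in $\pc$, and $X$ maps to $\pc$; I read this case as falling under ``$X$ is a (pre)periodic atom'' in the second alternative.

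The substance of the argument is case (C). I would first dispose of the trivial subcase $Y=\{x\}$ being a singleton: such $x$ cannot be an endpoint of any non-degenerate leaf of $\lam_\sim$ (else the other endpoint would be $\sim$-equivalent to $x$, contradicting $|Y|=1$), hence $p$ is locally injective at $x$ and $X=p(x)$ has no other branches of $J$ attached, so $X$ is not a cutpoint, contradicting the hypothesis. Consequently $|Y|\ge 2$, and the edges of $\ch(Y)$ are non-degenerate leaves of $\lam_\sim$ contained in $G$. By Lemma \ref{l:leafinsgap}(1) each such edge is either (pre)periodic or (pre)critical. I would then show that $Y$ itself is (pre)periodic: were some iterate $\si_d^m(Y)$ to have all its edges collapse critically, then $\si_d^{m+1}(Y)$ would be a singleton class and $f^{m+1}(X)$ a non-cutpoint single point by the previous step, contradicting persistence of the cut-atom property. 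Hence the orbit of $Y$ cannot fully collapse, and with each edge being eventually periodic the whole orbit of $Y$ must eventually be periodic. Therefore $X$ is a (pre)periodic point, and its eventual periodic image, necessarily a persistent cut-atom of point type, is a periodic cutpoint lying in $\pc$.

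The hardest part is reconciling case (B) with the stated conclusion: the literal reading ``$X$ is a (pre)periodic point'' excludes Jordan-curve atoms, so one must either treat ``point'' as shorthand for ``atom,'' or find a subtler obstruction ruling out case (B) under the super-gap hypothesis (for instance, by showing that persistent cuttedness of $X$ together with $Y\subsetneq G$---the proper containment of any Fatou gap of degree $\ge 2$ in a super-gap, established as in the proof of Lemma \ref{l:leafinsgap}(4)---leads to an incompatibility). In either reading, the unambiguous final assertion that $X$ eventually maps to $\pc$ holds in every case.
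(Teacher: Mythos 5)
Your proposal is correct and follows essentially the same route as the paper's proof, which also reduces to the case $X = x$ a persistent cutpoint, shows $p^{-1}(x)$ is non-trivial, rules out all boundary leaves being (pre)critical by noting that the class would then eventually collapse to a non-cutpoint, and deduces from Lemma~\ref{l:leafinsgap}(1) that some boundary leaf (hence the class) is (pre)periodic. Your remark on the Fatou-degree-$\ge 2$ case is a fair critique of the statement's wording: such atoms can sit inside super-gaps (several Fatou gaps of $\lam_\sim$ can share isolated edges that get removed during cleaning), so the dichotomy is best read with ``atom'' in place of ``point''; the paper handles this silently by opening with ``We may assume that $X = x$ is a persistent cutpoint,'' and only the final assertion that $X$ maps into $\pc$ is unambiguous. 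Two small imprecisions, neither fatal: the phrase ``were some iterate $\si_d^m(Y)$ to have all its edges collapse critically'' suggests a simultaneous collapse, whereas in general the edges collapse one at a time over several iterates (the eventual collapse to a singleton still follows); and ``with each edge being eventually periodic'' should be ``with \emph{some} edge being eventually periodic''---a single non-(pre)critical boundary leaf already forces (pre)periodicity of the class, since a $\sim$-class containing a periodic point coincides with the class of that point and is therefore itself periodic.
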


\begin{proof}
We may assume that $X=x$ is a persistent cutpoint. Then the
$\sim$-class $p^{-1}(x)$ is non-trivial. If the boundary of this
$\sim$-class consists of (pre)critical leaves only, then the entire
class gets eventually collapsed, which is a contradiction with
$f^n(x)$ being cut-atoms for all $n\ge 0$. Therefore, there is a
leaf $\ell$ on the boundary of $p^{-1}(x)$ that is not
(pre)critical. Then this leaf is (pre)periodic by Lemma
\ref{l:leafinsgap}, hence $p^{-1}(x)$ is also (pre)periodic, and
eventually maps to a periodic gap or leaf.
\end{proof}

Given any subcontinuum $X\subset J$ and $x\in X$, a component of
$X\sm\{x\}$ is called an \emph{$X$-leg of x}.  An $X$-leg of $x$ is
called \emph{essential} if $x$ eventually maps into this leg. An
$X$-leg is said to be \emph{critical} if it contains at least one
critical atom; otherwise a leg is called {\em non-critical}.

Recall that by Definition~\ref{d:relam} we call a periodic atom
\emph{rotational} if it is of degree $1$ and its rotation number is
not zero. Denote the union of all periodic super-gaps by $SG$ (note
that there are only finitely many such super-gaps). Then $\pc_{rot}\sm p(SG)$
is the set of all periodic rotational atoms $x$ which are not
contained in $p(SG)$ (any such $x$ is a point by
Lemma~\ref{l:leafinsgap}).
Finally, define $\ce_{s}$ as the dynamical span of the limit sets of all persistent cut-atoms $x$
(equivalently, cutpoints) which never map into $p(SG)$. Observe that
if $J$ is a dendrite, then $\ce_s=\ce$.

Now we study dynamics of super-gaps and the map as a whole. Our
standing assumption from here through Theorem~\ref{t:corexpli} is
that $\sim$ is a lamination and $\lam_\sim$ is such that
$\lam^c_\sim$ is not empty (equivalently, $\uc$ is not a super-gap).
Then there are finitely many super-gaps in $SG$, none of which
coincide with $\uc$, and by Lemma~\ref{l:leafinsgap} they are
disjoint. Choose $N_\sim=N$ as the minimal such number that all
periodic super-gaps and their periodic edges are $\si_d^N$-fixed.
Set $g_\sim=g=f^N$. By Lemma~\ref{l:leafinsgap} each super-gap has
at least one $g$-fixed edge and all its edges eventually map to
$g$-fixed edges.

Consider a component $A$ of $J\sm p(SG)$. There are several
$\si_d^N$-fixed super-gaps bordering $p^{-1}(A)$, and each such
super-gap has a unique well-defined edge contained in
$\ol{p^{-1}(A)}$. If all these edges are $\si_d^N$-fixed, we call
$A$ \emph{settled}. By Theorem~\ref{t:fxpt}, a settled $A$ contains
an element of $\pc_{rot}\sm p(SG)$ denoted by $y_A$. In this way, we
associate elements of $\pc_{rot}\sm p(SG)$ to all settled components
of $J\sm p(SG)$.

\begin{lem}\label{l:settle}
If $\ell$ is not a $\si^N_d$-fixed edge of a $\si_d^N$-fixed
super-gap $H$, then the component of $J\sm p(\ell)$ which contains
$p(H)$, contains a settled component of $J\sm p(SG)$. In particular,
settled components exist.
\end{lem}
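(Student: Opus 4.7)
The plan is to locate a settled component inside $D$, the component of $J\sm p(\ell)$ containing $p(H)$, by finding a $g$-invariant subcontinuum of $D$ bounded by $g$-fixed super-gap edges and then invoking Theorem~\ref{t:fxpt}. Since $H$ is a $g$-fixed super-gap of some degree $k\ge 2$, its remap on $\bd(H)$ is semiconjugate via $\psi_H$ to a (nontrivial) iterate of $\si_k$, which has at least one fixed point on $\uc$; hence $H$ admits at least one $g$-fixed edge $\ell^*$, and $\ell^*\ne\ell$ by hypothesis. Because the non-$H$ sides in $\ol\disk$ of distinct edges of $H$ are pairwise disjoint, the limb $L^*$, defined as the component of $J\sm\{p(\ell^*)\}$ not containing $p(H)$, lies entirely in $D$; moreover, since $p(\ell^*)$ is $g$-fixed, the continuum $\ol{L^*}=L^*\cup\{p(\ell^*)\}$ is $g$-invariant.

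Next I would apply Theorem~\ref{t:fxpt} to $\ol{L^*}$ with the single boundary cut-point $e_1=p(\ell^*)$ (verifying the rotation-number hypothesis on $p(\ell^*)$; if $\rho(p(\ell^*))\ne 0$ one replaces $\ell^*$ by $g(\ell^*)=\ell^*$ viewed in the other branch of the disjunction). The theorem produces inside $L^*$ one of: a $g$-fixed point of nonzero rotation number, a $g$-invariant Fatou domain of degree $>1$, or a $g$-invariant Siegel domain. In the rotational-point case, the point lies in $\pc_{rot}\sm p(SG)$ and inside a component of $J\sm p(SG)$ contained in $L^*\subset D$. In the two domain cases, the domain boundary is a $g$-fixed super-gap $H'\subset L^*$ strictly smaller in the combinatorics of $\lam^c_\sim$; one then iterates the argument with $H'$ in place of $H$ and any non-fixed edge of $H'$ in place of $\ell$. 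Since $SG$ is finite, this recursion terminates, eventually yielding a component of $J\sm p(SG)$ inside $D$ all of whose bordering super-gap edges are $g$-fixed, i.e., a settled component.

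For the ``in particular'' conclusion, $\lam^c_\sim$ is non-empty and hence contains at least one periodic super-gap $H$; if every edge of every $g$-fixed super-gap were $g$-fixed, the limb-plus-fixed-edge argument above would apply directly, while otherwise pick any non-fixed edge $\ell$ of some fixed $H$ and apply the main claim. The principal obstacle is the refinement from the invariant object produced by Theorem~\ref{t:fxpt} to an \emph{honest} component of $J\sm p(SG)$ bordered only by $g$-fixed edges: one must ensure that the produced fixed point or fixed atom lies in a component of $J\sm p(SG)$ all of whose bordering super-gap edges are $g$-fixed, which in general requires iterating the construction; termination relies essentially on the finiteness of $SG$ and the finiteness of the $g$-fixed-edge set of each periodic super-gap. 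A secondary subtlety is the rotation-number hypothesis of Theorem~\ref{t:fxpt}, which must be verified for $p(\ell^*)$ using the structure of its $\sim$-class.
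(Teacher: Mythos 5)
Your approach invokes Theorem~\ref{t:fxpt} on the limb $\ol{L^*}$, but this introduces a genuine gap that the paper's argument avoids entirely. In your ``rotational-point case'' you assert that the component of $J\sm p(SG)$ containing the produced fixed point of nonzero rotation number is the desired settled component, but this does not follow: a component of $J\sm p(SG)$ can contain a rotational fixed point and still have a bordering super-gap whose bordering edge is not $\si_d^N$-fixed (the fixed edge of that super-gap could lie on its far side). You acknowledge this ``principal obstacle'' in your final paragraph, but you do not actually close it, and the way you would have to close it is exactly the paper's argument, which makes the entire invocation of Theorem~\ref{t:fxpt} superfluous. There are secondary issues as well: the boundary of the invariant Fatou or Siegel domain produced by Theorem~\ref{t:fxpt} need not itself be a super-gap of $\lam^c_\sim$ (it is a gap of $\lam_\sim$ contained in one), and your treatment of the rotation-number hypothesis at $e_1=p(\ell^*)$ is not an argument.

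The paper's proof is purely combinatorial and does not touch Theorem~\ref{t:fxpt}. It picks a $\si^N_d$-fixed edge $\ell'$ of $H$ and the component $B$ of $J\sm p(SG)$ bordering $H$ there; if $B$ is not settled, it has a bordering super-gap $G''$ whose bordering edge $\ell''$ is not $\si^N_d$-fixed, and one recurses with $(G'',\ell'')$ in place of $(H,\ell)$. Each step produces a new $\si^N_d$-fixed super-gap, the super-gaps cannot form a cycle (one is always moving strictly away from $p(\ell)$ in the tree-like structure of $J$), and since there are finitely many $\si^N_d$-fixed super-gaps the walk terminates at a settled component inside $D$. This walk is the correct mechanism; your dynamical detour through Theorem~\ref{t:fxpt} does not replace it.
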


\begin{proof}
Choose a $\si^N_d$-fixed edge $\ell'$ of $H$ and a component $B$ of
$J\sm p(SG)$ such that $\ell'$ is contained in the closure of
$p^{-1}(B)$. If $B$ is settled we are done. Otherwise find a
super-gap $G''$ with an edge $\ell''$ which ($\ell''$) is not
$\si_d^N$-fixed and borders $p^{-1}(B)$, then proceed with $\ell''$
as before with $\ell$.

In the end we will find a settled component of $J\sm
p(SG)$ in the component of $J\sm p(\ell)$ containing $p(H)$. Indeed,
on each step we find a new $\si_d^N$-fixed super-gap different from
the preceding one. Since there are finitely many $\si_d^N$-fixed
super-gaps, we either stop at some point, or form a cycle. The
latter is clearly impossible. Thus, there exists a non-empty
collection of settled components $A$.
\end{proof}

We are ready to prove the main result dealing with super-gaps.

\begin{thm}\label{t:corexpli}
If $x$ is a persistent cutpoint never mapped to $p(SG)$ then there
is $n\ge 0$ such that $f^n(x)$ separates two points of $\pc_{rot}\sm
p(SG)$ and is a cutpoint of $\ic(\pc_{rot}\sm p(SG))$ so that
$\ce_s=\ic(\pc_{rot}\sm p(SG))$.

Moreover, there exist infinitely many persistent periodic rotational
cutpoints outside $p(SG)$, $\ce_s\subset \ce_{rot}$, and any periodic
cutpoint outside $p(SG)$ separates two points of $\pc_{rot}\sm p(SG)$
and is a cutpoint of\, $\ce_s$, of\, $\ce_1$ and of\, $\ce$.
\end{thm}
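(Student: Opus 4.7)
The plan is to establish the separation statement for a persistent cutpoint $x$ whose orbit avoids $p(SG)$, derive the equality $\ce_s = \ic(\pc_{rot} \sm p(SG))$, and then deduce the three supplementary claims. Let $X = p^{-1}(x)$ be the $\sim$-class of $x$. Since $x$ never lands in $p(SG)$, and since by Lemma~\ref{l:leafinsgap}(1) every $\sim$-class that is (pre)periodic or (pre)critical eventually projects into $p(SG)$, $X$ is non-degenerate and non-(pre)critical. Applying Lemma~\ref{l:no1side}, I obtain $n\ge 0$ and $k_1<k_2$ with $\si_d^{n+k_1}(X)$ and $\si_d^{n+k_2}(X)$ in distinct holes of $\si_d^n(X)$. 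Projecting by $p$, the cutpoint $f^n(x)$ has at least two legs $L_1,L_2$ in $J$, each containing forward iterates of $x$.

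The crucial step is to exhibit a point of $\pc_{rot} \sm p(SG)$ in each essential leg $L_i$. I will apply Theorem~\ref{t:fxpt} (passing to the iterate $g=f^N$) to $L_i$, viewed as a component of $K_\sim \sm \{f^n(x)\}$ with the required rotation/convergence hypothesis at $f^n(x)$ provided by persistence and the recurrence of iterates of $x$ into $L_i$. The theorem yields one of three alternatives: an invariant Fatou domain of degree greater than $1$ inside $L_i$, an invariant Siegel domain inside $L_i$, or a fixed cutpoint inside $L_i$ of non-zero rotation number. The first two alternatives produce a $\si_d^N$-fixed super-gap inside $L_i$; in that case I use Lemma~\ref{l:settle}, traversing a sequence of non-$\si_d^N$-fixed super-gap edges in $L_i$, until I reach a settled component $A\subset L_i$ and the rotational periodic point $y_A \in \pc_{rot}\sm p(SG)$ supplied by Theorem~\ref{t:fxpt} in the settled case. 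Thus each $L_i$ contains some $y_i \in \pc_{rot}\sm p(SG)$, so $f^n(x)$ separates $y_1$ from $y_2$ and is a cutpoint of the complete invariant continuum $\ic(\pc_{rot}\sm p(SG))$.

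The equality $\ce_s = \ic(\pc_{rot} \sm p(SG))$ follows from this: the inclusion $\subset$ holds because $\om(x) \subset \ic(\pc_{rot}\sm p(SG))$ for each such $x$ (every point of $\om(x)$ is approximated by iterates $f^n(x)$, each a cutpoint of the right-hand side), while $\supset$ is immediate from $y \in \om(y)$ for each $y \in \pc_{rot}\sm p(SG)$. The inclusion $\ce_s \subset \ce_{rot}$ is then immediate from $\pc_{rot}\sm p(SG) \subset \pc_{rot} \subset \ce_{rot}$. For a periodic cutpoint $z$ outside $p(SG)$, the separation statement applied to $z$ yields some iterate $f^n(z)$ that separates two points of $\pc_{rot}\sm p(SG)$; because the orbit of $z$ is finite, the argument can be run with $z$ in the role of $f^n(x)$ from the start (apply Lemma~\ref{l:no1side} directly to $p^{-1}(z)$, which gives essential legs of $z$ itself, and then repeat the Theorem~\ref{t:fxpt} step inside each leg), producing rotational periodic points of $\pc_{rot}\sm p(SG)$ separated by $z$. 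Since $\pc_{rot}\sm p(SG) \subset \pc_1 \subset \pc$, the point $z$ is simultaneously a cutpoint of $\ce_s$, $\ce_1$, and $\ce$. Infinitely many rotational periodic cutpoints in $\pc_{rot}\sm p(SG)$ arise by iteratively applying Theorem~\ref{t:fxpt} to each leg of an already constructed $y_A$: each leg yields a fresh rotational periodic cutpoint outside $p(SG)$, and induction gives infinitely many.

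The main obstacle lies in the second paragraph: guaranteeing that every essential leg produces a periodic \emph{rotational} cutpoint specifically in $\pc_{rot}\sm p(SG)$, rather than a periodic non-rotational point, or a periodic point trapped on the boundary of a periodic Fatou or Siegel domain (hence in $p(SG)$). The delicate point is coordinating the three alternatives of Theorem~\ref{t:fxpt} with the super-gap structure: whenever the Fatou or Siegel alternatives occur, one must use Lemma~\ref{l:settle} and the finiteness of $\si_d^N$-fixed super-gaps to reroute into a settled component and harvest the desired $y_A$, rather than being stuck at a super-gap that contains no rotational point outside $p(SG)$.
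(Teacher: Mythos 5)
Your overall architecture tracks the paper's: you prove that essential legs contain points of $\pc_{rot}\sm p(SG)$, use Lemma~\ref{l:no1side} to get two essential legs for a wandering cutpoint, use Lemma~\ref{l:settle} to find settled components, and iterate Theorem~\ref{t:fxpt} to get infinitely many rotational periodic points. However, there are two genuine gaps.

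First, your justification in the opening paragraph misreads Lemma~\ref{l:leafinsgap}(1). That lemma says that every leaf \emph{inside a super-gap} is (pre)periodic or (pre)critical — not that every (pre)periodic or (pre)critical class projects into $p(SG)$. The converse you assert is in fact false: the theorem being proved asserts the existence of infinitely many periodic rotational cutpoints \emph{outside} $p(SG)$, and each of these has a (pre)periodic $\sim$-class that never meets $p(SG)$. You do not actually need this false claim to get started, but it signals a confusion that resurfaces in the periodic case.

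Second, and more seriously, your treatment of the periodic case is incorrect. You propose to "apply Lemma~\ref{l:no1side} directly to $p^{-1}(z)$" when $z$ is a periodic cutpoint outside $p(SG)$, to conclude that $z$ has at least two essential legs. But Lemma~\ref{l:no1side} is vacuous for periodic classes: if $X=p^{-1}(z)$ has period $p$, then $f^p(X)=X$ is certainly not contained in a hole of $X$, so the hypothesis of the lemma fails trivially and its negation yields no information about $z$ having two essential legs. The paper's treatment of periodic cutpoints $y$ outside $p(SG)$ is genuinely different: it observes that some power of $\si_d^N$ fixes $p^{-1}(y)$ with rotation number zero, so each edge $\ell$ of $p^{-1}(y)$ is a repelling fixed leaf, and hence leaves and gaps near $\ell$ inside the component $B$ behind $\ell$ are pushed away from $y$; this makes $B$ an essential leg of nearby points, to which the already-established "every essential leg contains a point of $\pc_{rot}\sm p(SG)$" claim applies. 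This rotation-number-zero repelling argument is the missing ingredient in your proof, and without it you cannot establish that periodic cutpoints (or, by reduction, (pre)periodic persistent cutpoints) separate two points of $\pc_{rot}\sm p(SG)$.

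A smaller issue: when you "apply Theorem~\ref{t:fxpt} to $L_i$ viewed as a component of $K_\sim\sm\{f^n(x)\}$," the hypothesis of that theorem requires $f_\sim(f^n(x))$ to lie in the component containing $L_i$, which is not true for every essential leg $L_i$ (only for one of them). The paper sidesteps this by working inside the component $A$ of $L_i\sm p(SG)$ containing $x$ in its closure and by treating the boundary super-gap edges $x'_i$ as the boundary points $e_j$ in Theorem~\ref{t:fxpt}; whether these $x'_i$ are $g$-fixed or not determines whether one invokes Theorem~\ref{t:fxpt} directly on $\ol A$ or detours through Lemma~\ref{l:settle}. Your proposal does not quite get this set-up right.
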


\begin{proof}
We first prove that \emph{every $x$-essential leg of every point $x\in J$ contains a
point of $\pc_{rot}\sm p(SG)$}.
This will imply that for a non-(pre)periodic persistent cutpoint $x$
there exists $n$ such that $f^n(x)$ separates two points of $\pc_{rot}\sm p(SG)$
because, by Lemma~\ref{l:no1side},
some iterated $g$-image of $x$ has at least two $x$-essential $J$-legs.

Let $L$ be an $x$-essential $J$-leg of $x$. Denote by $A$ the
component of $L\sm p(SG)$ which contains $x$ in its closure. If $L$
contains no $p$-images of $\si^N_d$-fixed super-gaps (which implies
that $L=A$), then the claim follows from Theorem~\ref{t:fxpt}
applied to $\ol{A}$. Otherwise there are finitely many super-gaps
$U_1, \dots, U_t$ such that $p(U_i)$ borders $A$. For each $i$ let
us take a point $x'_i\in \bd(p(U_i))$ that separates $x$ from the
rest of $p(U_i)$. If all the points $x'_i$ are $g$-fixed, then we
are done by Theorem~\ref{t:fxpt} applied to $\ol{A}$. If there
exists $i$ such that the point $x'_i$ is not $g$-fixed, then, by
Lemma~\ref{l:settle}, $x'_i$ separates the point $x$ from some
settled component $B$, which in turn contains an element
$y_B\in\pc_{rot}\sm p(SG)$. Thus, in any case \emph{every
$x$-essential leg of $x\in J$ contains a point of $\pc_{rot}\sm
p(SG)$} and the claim from the first paragraph is proven.

Now consider the case of a $g$-periodic cutpoint $y$ of $J$ outside
$p(SG)$ (a priori it may happen that $y$ above is an endpoint of
$\ce_s$). We want to show that \emph{$y$ separates two points of
$\pc_{rot}\sm p(SG)$}. Indeed, a certain power $(\si^N_d)^k$ of
$\si^N_d$ fixes $p^{-1}(y)$ and has rotation number zero on
$p^{-1}(y)$. 
Choose an edge $\ell$ of $p^{-1}(y)$ and consider the component $B$
of $J\sm \{y\}$ such that $\ol{p^{-1}(B)}$ contains $\ell$.

Then $(\si^N_d)^k$ fixes $\ell$ while
leaves and gaps in $\ol{p^{-1}(B)}$, close to $\ell$, are repelled away from
$\ell$ inside $\ol{p^{-1}(B)}$ by $(\si^N_d)^k$. Hence their $p$-images are repelled
away from $y$ inside $B$ by the map $g^k$. By the previous paragraph this implies
that there is an element (a point) $t_B\in\pc_{rot}\sm p(SG)$ in $B$. As this
applies to all edges of $p^{-1}(y)$, we see that \emph{$y$ separates two
points of $\pc_{rot}\sm p(SG)$}. As $\pc_{rot}\sm p(SG)\subset \ce_s\subset \ce$, this
proves that \emph{any periodic cutpoint outside $p(SG)$ separates two points
of $\pc_{rot}\sm p(SG)$ and is a cutpoint of\,
$\ce_s$ (and, hence, of\, $\ce_1$ and of\, $\ce$)}.
This also proves that \emph{for a (pre)periodic persistent cutpoint $x$
there exists $n$ such that $f^n(x)$ separates two points of $\pc_{rot}\sm p(SG)$};
by the above, it suffices to take $r$ such that $g^r(x)=f^{Nr}$ is periodic and
set $n=Nr$.

Let us prove that \emph{there are infinitely many points of
$\pc_{rot}$ in any settled component $A$}. Indeed, as above choose a
$g$-periodic point $y\in A$ such that a certain power $(\si^N_d)^k$
of $\si^N_d$ which fixes $p^{-1}(y)$ has rotation number zero on
$p^{-1}(y)$. Choose an edge $\ell$ of $p^{-1}(y)$ and consider the
component $B$ of $A\sm \{y\}$ such that $\ol{p^{-1}(B)}$ contains
$\ell$. Then leaves and gaps in $\ol{p^{-1}(B)}$, which are close to
$\ell$, are repelled away from $\ell$ inside $\ol{p^{-1}(B)}$ by
$(\si^N_d)^k$. Hence their $p$-images are repelled away from $y$
inside $B$ by $g^k$. By Theorem~\ref{t:fxpt} this implies that there
exists a $g^k$-fixed point $z\in B$ with non-zero rotation number.
Replacing $A$ by a component of $A\sm z$, we can repeat the same
argument. If we do it infinitely many times, we will prove that
\emph{there are infinitely many points of $\pc_{rot}$ in any settled
component $A$}.
\end{proof}

Corollary~\ref{c:corexpli2} immediately follows from
Theorem~\ref{t:corexpli}. Observe that if $J$ is a dendrite then
$SG=p(SG)=\0$ and hence $\ce=\ce_1=\ce_s$.

\begin{cor}\label{c:corexpli2}
If $J$ is a dendrite then $\ce=\ce_1=\ce_s=\ce_{rot}=\ic(\pc_{rot})$.
Furthermore, for any persistent cutpoint $x$ there is $n\ge 0$ such
that $f^n(x)$ separates two points from $\pc_{rot}$ (thus, at some
point $x$ maps to a cutpoint of $\ce$). Moreover, any periodic cutpoint
separates two points of $\pc_{rot}$ and therefore is itself a cutpoint
of $\ce$.
\end{cor}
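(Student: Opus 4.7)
The plan is to deduce Corollary~\ref{c:corexpli2} as a direct specialization of Theorem~\ref{t:corexpli} to the dendritic case. The crucial observation is that a dendrite contains no Jordan curve; consequently $J$ has no Fatou domains, $\lam_\sim$ has no infinite gaps, $\lam^c_\sim=\lam_\sim$, and therefore $SG=\0$ and $p(SG)=\0$. Once this is established, the remainder of the proof is essentially a translation between definitions.

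The first step is to unpack Definition~\ref{dynacore} under this simplification. Every atom of $J$ is then a singleton, so every persistent cut-atom is a persistent cutpoint and every periodic cut-atom has degree $1$; in particular the collections of $\om$-limit sets used to define $\ce$, $\ce_1$, and $\ce_s$ coincide, yielding $\ce=\ce_1=\ce_s$. The inclusion $\ce_{rot}\subset \ce_s$ is similarly immediate because the phrase ``never maps into $p(SG)$'' is vacuous and every rotational periodic atom is a periodic cut-atom of degree $1$.

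The second step is to invoke Theorem~\ref{t:corexpli} with $p(SG)=\0$; it gives $\ce_s=\ic(\pc_{rot}\sm p(SG))=\ic(\pc_{rot})$ together with $\ce_s\subset \ce_{rot}$, which combined with step one closes the chain $\ce=\ce_1=\ce_s=\ce_{rot}=\ic(\pc_{rot})$. The two remaining assertions follow by the same substitution: the claim that every persistent cutpoint $x$ has some iterate $f^n(x)$ separating two points of $\pc_{rot}$, and that every periodic cutpoint separates two points of $\pc_{rot}$ and is thereby a cutpoint of $\ce$, are precisely the corresponding statements of Theorem~\ref{t:corexpli} read with $p(SG)=\0$ and $\ce_s=\ce$.

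The only real obstacle is definitional bookkeeping: one must check carefully that the conventions in Definition~\ref{dynacore}, written for the most general topological Julia set, genuinely collapse in the dendritic case so that the four cores share the same defining collection of $\om$-limit sets. No new dynamical input beyond Theorem~\ref{t:corexpli} is required.
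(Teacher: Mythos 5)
Your proposal is correct and follows the same route the paper intends: note that in the dendritic case there are no Fatou domains so $SG=p(SG)=\0$, hence $\ce=\ce_1=\ce_s$, and then specialize Theorem~\ref{t:corexpli} (which gives $\ce_s=\ic(\pc_{rot}\sm p(SG))$, $\ce_s\subset\ce_{rot}$, and the two separation statements). The paper marks the proof ``Left to the reader'' and records exactly these two observations in the surrounding text, so your argument is the intended one.
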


\begin{proof}
Left to the reader.
\end{proof}

Corollary~\ref{c:corexpli2} implies the last, dendritic part of
Theorem~\ref{t:criticoreintr}. The rest of
Theorem~\ref{t:criticoreintr} is proven below.

\smallskip

\noindent \emph{Proof of Theorem~\ref{t:criticoreintr}}. By definition,
$\ic(\pc)\subset\ce, \ic(\pc_1)\subset \ce_1$ and
$\ic(\pc_{rot})\subset \ce_{rot}$. To prove the opposite inclusions,
observe that by definition in each of these three cases it suffices to
consider a persistent cut-atom $X$ which is not (pre)periodic. However
in this case by Theorem~\ref{t:corexpli} there exists $n$ such that
$f^n(X)$ separates two points of $\pc_{rot}\sm p(SG)$ and therefore is
contained in
$$\ic(\pc_{rot}\sm p(SG))\subset \ic(\pc_{rot})\subset \ic(\pc_1)\subset \ic(\pc)$$
which proves all three inclusions of the theorem. \hfill \qed

Theorems~\ref{t:criticoreintr} and ~\ref{t:corexpli} give explicit
formulas for various versions of the dynamical core of a topological
polynomial $f$ in terms of various sets of periodic cut-atoms. These
sets of periodic cut-atoms are most likely infinite. It may
also be useful to relate the sets $\ce$ or $\ce_s$ to a finite set
of critical atoms. To do this, we need new notation. Let $\crA(X)$
be the set of critical atoms of $f|_X$; set $\crA=\crA(J)$.
Also, denote by $\pc(X)$ the set of all periodic cut-atoms of $X$
and by $\pc_1(X)$ the set of all periodic cut-atoms of $X$ of degree
$1$.


\begin{lem}
\label{l:cutgorel} Let $X\subset J$ be an invariant complete
continuum which is not a Siegel atom. Then the following facts hold.

\begin{enumerate}

\item For every cutpoint $x$ of $X$, there exists an integer
$r\ge 0$ such that $f^r(x)$ either {\rm(a)} belongs to a set from
$\crA(X)$, or {\rm(b)} separates two sets of $\crA(X)$, or {\rm(c)}
separates a set of $\crA(X)$ from its image. In any case $f^r(x)\in
\ic(\crA(X))$, and in cases {\rm(a)} and {\rm(b)} $f^r(x)$ is a
cutpoint of $\ic(\crA(X))$. In particular, the dynamical span of all
cut-atoms of $X$ is contained in $\ic(\crA(X))$.

\item $\pc(X)\subset \ic(\crA(X))$. In particular, if
$X=\ic(\pc(X))$ then $X=\ic(\crA(X))$.


\end{enumerate}

\end{lem}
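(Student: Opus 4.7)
The plan is to exploit two earlier results: Lemma~\ref{l:easytop}, which forces $f|_X$ to be one-to-one whenever $X$ contains no critical atoms, and Lemma~\ref{l:condens}, which guarantees that (pre)critical atoms are condense in any invariant complete continuum that is not a Siegel atom. As a preliminary reduction I first dispose of the case $\crA(X) = \emptyset$: combining the two lemmas forces $X$ to be a single point, so both parts of the statement hold vacuously. Henceforth assume $\crA(X) \ne \emptyset$, so that $\ic(\crA(X))$ is a genuine complete $f$-invariant subcontinuum of $X$.

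For part (1), fix a cutpoint $x$ of $X$ and track its forward orbit. If some $f^r(x)$ belongs to a critical atom, case (a) holds; otherwise $f^r(x)$ is a cutpoint with well-defined legs of $X$ for every $r$. If at some $r$ two distinct legs at $f^r(x)$ contain critical atoms, case (b) holds; if at some $r$ a critical atom $A$ is separated in $X$ from its image $f(A)$ by $f^r(x)$, case (c) holds. The central task is to show these three alternatives exhaust the possibilities. Suppose toward contradiction that none ever occurs. Then for every $r$ there is a single leg $L_r$ of $X$ at $f^r(x)$ containing every critical atom together with all of its first $f$-images. Let $M_r$ denote the union of the remaining legs with $f^r(x)$; this is a complete subcontinuum of $X$ containing no critical atom. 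The plan is to establish $f(M_r) \subset M_{r+1}$, so that inductively $f^n(M_0) \subset M_n$ contains no critical atom for any $n$, contradicting Lemma~\ref{l:condens} applied to any non-degenerate subcontinuum of $M_0$. To prove $f(M_r) \subset M_{r+1}$, I apply Lemma~\ref{l:easytop} to conclude that $f|_{M_r}$ is one-to-one; then, if the continuum $f(M_r)$ were to cross $f^{r+1}(x)$ into $L_{r+1}$, an arc of $M_r$ joining a suitable preimage to $f^r(x)$ would map bijectively to an arc from $L_{r+1}$ to $f^{r+1}(x)$, and the concatenation-plus-Jordan-curve argument from the proof of Lemma~\ref{l:easytop} produces a Fatou boundary of degree greater than one inside $M_r$, the desired contradiction.

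Once the trichotomy is established, the membership $f^r(x) \in \ic(\crA(X))$ follows from completeness of $\ic(\crA(X))$: in case (a), $f^r(x)$ lies in a critical atom already contained in $\ic(\crA(X))$; in case (b), $\ic(\crA(X))$ is a complete subcontinuum of $X$ containing two critical atoms separated in $X$ by $f^r(x)$, so it must contain $f^r(x)$; in case (c), $\ic(\crA(X))$ contains both $A$ and $f(A)$ (the latter by $f$-invariance), which are again separated in $X$ by $f^r(x)$. In cases (a) and (b), moreover, $f^r(x)$ separates points of $\crA(X) \subset \ic(\crA(X))$, so it is a cutpoint of $\ic(\crA(X))$. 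The dynamical-span claim is immediate since $\ic(\crA(X))$ is complete and $f$-invariant and contains the $\om$-limit set of every cutpoint by the trichotomy. For part (2), periodic Fatou cut-atoms of degree greater than one lie in $\crA(X) \subset \ic(\crA(X))$ by definition; periodic cutpoints are handled by applying (1), using periodicity to push the relation back to the cut-atom itself, and invoking $f$-invariance of $\ic(\crA(X))$; periodic Siegel cut-atoms are handled by observing that such an atom $A$ is a cut-atom of $X$, so there exist cutpoints of $X$ lying on $A$, each in $\ic(\crA(X))$ by (1), whose orbit closure under the remap on $A$ is all of $A$, forcing $A \subset \ic(\crA(X))$ by closedness. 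The final ``in particular'' is immediate from $\pc(X) \subset \ic(\crA(X)) \subset X = \ic(\pc(X))$.

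The main obstacle is precisely the step $f(M_r) \subset M_{r+1}$: the injectivity of $f|_{M_r}$ from Lemma~\ref{l:easytop} is not by itself sufficient, as one must also preclude the connected image $f(M_r)$ from crossing into the critical leg at $f^{r+1}(x)$. This requires a careful repetition of the Jordan-curve-and-Fatou-boundary analysis underlying Lemma~\ref{l:easytop}, performed inside $M_r$ rather than inside all of $X$, and is the only technically delicate ingredient in the proof.
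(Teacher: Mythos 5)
The logical skeleton of your argument for part (1) — negate (a), (b), (c); show that the non-critical side $M_r$ of each $f^r(x)$ satisfies $f(M_r)\subset M_{r+1}$; then contradict Lemma~\ref{l:condens} — is a reasonable reorganization of the paper's proof (the paper instead derives case (c) from the negation of (a) and (b)), and your treatment of part (2) is essentially sound. However, the crucial step $f(M_r)\subset M_{r+1}$, which you correctly flag as the delicate point, is not established.

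The problem is that you try to get it from Lemma~\ref{l:easytop}'s Jordan-curve analysis ``performed inside $M_r$.'' That analysis begins from the hypothesis that $f$ fails to be injective on some arc, finds an arc whose image is a Jordan curve, and concludes there is a critical atom. But you have just invoked Lemma~\ref{l:easytop} to conclude $f|_{M_r}$ \emph{is} one-to-one, so there is no such arc to start from; the ``concatenation-plus-Jordan-curve'' machinery has no input. Injectivity of $f|_{M_r}$ alone says nothing about \emph{where} the homeomorphic image $f(M_r)$ lands; a priori $f$ can perfectly well carry a non-critical leg of $f^r(x)$ into the critical leg $L_{r+1}$ (this is exactly what eventually happens, by Lemma~\ref{l:condens}). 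Ruling this out under the negation of (c) requires the germ-level bookkeeping that the paper sets up: near $f^k(x)$ one has a connected neighborhood $U_k$ in $X$ on which $f$ is injective, producing a bijection $\ph_k$ between the legs (germs) at $f^k(x)$ and a subcollection of legs at $f^{k+1}(x)$. If a non-critical leg's germ is sent into the critical leg $L_{r+1}$, then by this bijection the germ of the critical leg $L_r$ must be sent to some non-critical leg $B\ne L_{r+1}$; applying injectivity to the component $C$ of $L_r\setminus(\crA(X)\cup\{f^r(x)\})$ adjacent to $f^r(x)$, one finds that $f(\ol C)\subset\ol B$, and hence the critical atoms bounding $\ol C$ (which sit in the critical leg) have images meeting $B$ — so they are separated from their images by the cutpoint, i.e., case (c) holds after all. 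That germ-bijection argument is the missing content, and it is what the paper's proof actually supplies; without it your inductive step is unsupported.

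A minor additional remark on part (2): for a periodic Siegel cut-atom $A$, part (1) gives $f^r(y)\in\ic(\crA(X))$ for some $r\ge 0$, not $y\in\ic(\crA(X))$ directly; since the cutpoints $y$ of $X$ on $A$ need not be periodic, one should pass through the $\omega$-limit set (forward invariance and closedness of $\ic(\crA(X))$ then give $A\subset\ic(\crA(X))$ because the orbit of $y$ is dense in $\orb(A)$). Your phrase ``orbit closure under the remap on $A$'' points in the right direction but should be made precise along these lines. The paper shortcuts this by noting that $\bd(A)$ carries a critical point of $f|_X$ whose $\omega$-limit set already fills out $A$.
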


\begin{proof}
(1) Suppose that $x$ does not eventually map to $\crA(X)$.
Then all points in the forward orbit of $x$ are cutpoints of $X$
(in particular, there are at least two $X$-legs at any such point).

If $f^{r}(x)$ has more than one critical $X$-leg for some $r\ge 0$,
then $f^{r}(x)$ separates two sets of $\crA(X)$. Assume that
$f^k(x)$ has one critical $X$-leg for every $k\ge 0$. By
Lemma~\ref{l:easytop}, each non-critical $X$-leg $L$ of $f^{k}(x)$
maps in a one-to-one fashion to some $X$-leg $M$ of
$f^{k+1}(x)$. There is a connected neighborhood $U_k$ of $f^k(x)$ in
$X$ so that $f|_{U_k}$ is one-to-one. We may assume that $U_k$
contains all of the non-critical legs at $f^k(x)$.  Hence there
exists a bijection $\ph_k$ between components of $U_k\sm f^k(x)$ and
components of $f(U_k)\sm f^{k+1}(x)$ showing how small pieces
(\emph{germs}) of components of $X\sm \{f^k(x)\}$, containing
$f^k(x)$ in their closures, map to small pieces (\emph{germs}) of
components of $X\sm \{f^{k+1}(x)\}$, containing $f^{k+1}(x)$ in their
closures.

By Lemma~\ref{l:condens} choose $r>0$ so that a non-critical $X$-leg
of $f^{r-1}(x)$ maps to the critical $X$-leg of $f^{r}(x)$. Then the
bijection $\ph_{r-1}$ sends the germ of the critical $X$-leg $A$ of
$f^{r-1}(x)$ to a non-critical $X$-leg $B$ of $f^{r}(x)$. Let
$C\subset A$ be the connected component of
$X\setminus(\crA(X)\cup\{f^{r-1}(x)\})$ with $f^{r-1}(x)\in \ol{C}$;
let $R$ be the union of $\ol{C}$ and all the sets from $\crA(X)$
non-disjoint from $\ol{C}$. Then $f(R)\subset \ol{B}$ while all the
critical atoms are contained in the critical leg $D\ne B$ of
$f^{r}(x)$. It follows that $f^{r}(x)$ separates these critical
atoms from their images. By definition of $\ic(\crA(X))$ this
implies that either $f^r(x)$ belongs to a set from $\crA(X)$, or
$f^r(x)$ is a cutpoint of $\ic(\crA(X))$. In either case $f^r(x)\in
\ic(\crA(X))$. The rest of (1) easily follows.

(2) If $x$ is a periodic cutpoint of $X$ then, choosing $r$ as
above, we see that $f^r(x)\in \ic(\crA(X))$ that implies that $x\in
\ic(\crA(X))$ (because $x$ is an iterated image of $f^r(x)$ and
$\ic(\crA(X))$ is invariant). Thus, all periodic cutpoints of $X$
belong to $\ic(\crA(X))$. Now, take a periodic Fatou atom $Y$. If
$Y$ is of degree greater than $1$, then it has an image $f^k(Y)$
which is a critical atom of $f|_X$. Thus, $Y\subset \ic(\crA(X))$.
Otherwise for some $k$ the set $f^k(Y)$ is a periodic Siegel atom
with critical points on its boundary. Since $X$ is not a Siegel atom
itself, $f^n(Y)$ contains a critical point of $f|_X$. Hence the
entire $Y$ is contained in the limit set of this critical point and
again $Y\subset \ic(\crA(X))$. Hence each periodic Fatou atom in $X$
is contained in $\ic(\crA(X))$. Thus, $\pc(X)\subset \ic(\crA(X))$
as desired.
\end{proof}

We can now relate various dynamical cores to the critical
atoms contained in these cores. However first let us consider the
following heuristic example. Suppose that the lamination $\sim$ of
sufficiently high degree has an invariant Fatou gap $V$ of degree
$2$ and, disjoint from it,  a super-gap $U$ of degree $3$. The
super-gap $U$ is subdivided (``tuned'') by an invariant quadratic
gap $W\subset U$ with a critical leaf on its boundary so that $W$
concatenated with its appropriate pullbacks fills up $U$ from
within.

Also assume that the strip between $U$ and $V$ is enclosed by two
circle arcs and two edges, a fixed edge $\ell_u$ of $U$ and a
prefixed edge $\ell_v$ of $V$ (that is, $\si_d(\ell_v)$ is a fixed
edge of $V$). Moreover, suppose that  $U$ and $V$ have  only two
periodic edges , namely, $\ell_u$ and $\si_d(\ell_v)$, so that all
other edges of $U$ and $V$ are preimages of $\ell_u$ and
$\si_d(\ell_v)$. All other periodic gaps and leaves of $\sim$ are
located in the component $A$ of $\disk\sm \si_d(\ell_v)$ which does
not contain $U$ and $V$.

It follows from Theorem~\ref{t:criticoreintr} that in this case
$\ce_1$ includes a continuum $K\subset p(A\cup V)$ united with a
connector-continuum $L$ connecting $K$ and $p(\ell_u)$. Moreover,
$p(\bd(V))\subset K$. Basically, all the points of $L$ except for
$p(\ell_u)$ are ``sucked into'' $K$ while being repelled away from
$p(\ell_u)$. Clearly, in this case even though
$p(\ell_u)\in \ce_1$, still $p(\ell_u)$ does not belong to the set
$\ic(\crA(\ce_1))$ because $p(\bd(W))$, while being a critical atom, is
not contained in $\ce_1$. This shows that some points of $\ce_1$ maybe
located outside $\ic(\crA(\ce_1))$ and also that there might exist
non-degenerate critical atoms intersecting $\ce_1$ over a point (and
hence not contained in $\ce_1$). This example shows that the last claim of
Lemma~\ref{l:cutgorel} cannot be established for $X=\ce_1$.

Also, let us consider the case when $\ce$ is a Siegel atom $Z$. Then
by definition there are no critical points or atoms of $f|_\ce$,
so in this case $\crA(\ce)$ is empty. However this is the only exception.

\begin{thm}
\label{t:critcore}
Suppose that $\ce$ is not a unique Siegel atom. Then
$\ce=\ic(\crA(\ce))$,
$\ce_{rot}=\ic(\crA(\ce_{rot}))$, $\ce_{s}=\ic(\crA(\ce_{s}))$.
\end{thm}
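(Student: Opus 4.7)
The plan is to apply Lemma~\ref{l:cutgorel}(2) to each of $X\in\{\ce,\ce_{rot},\ce_s\}$. That lemma states that for an invariant complete continuum $X\subset J$ which is not a Siegel atom, $\pc(X)\subset\ic(\crA(X))$; combined with the trivial inclusion $\ic(\crA(X))\subset X$ (since $X$ is invariant, complete and contains $\crA(X)$), this reduces the task to establishing the reverse inclusion $X\subset\ic(\crA(X))$ for each of our three cores.

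By Theorem~\ref{t:criticoreintr} we have $\ce=\ic(\pc)$ and $\ce_{rot}=\ic(\pc_{rot})$, while Theorem~\ref{t:corexpli} gives $\ce_s=\ic(\pc_{rot}\sm p(SG))$. Because $\ic(\crA(X))$ is itself a complete invariant subcontinuum of $X$, it is enough to verify that every periodic cut-atom of $f$ in the ambient set, restricted to $X$, lies in $\ic(\crA(X))$. Periodic cutpoints outside $p(SG)$ are cutpoints of $\ce$ by the ``Moreover'' clause of Theorem~\ref{t:corexpli}, hence members of $\pc(X)$ for each $X\in\{\ce,\ce_{rot},\ce_s\}$ in which they lie, and so fall directly under Lemma~\ref{l:cutgorel}(2). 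Periodic Fatou atoms of degree $>1$ contained in $\ce$ have an iterate that is a critical atom of $f|_\ce$, placing them in $\ic(\crA(\ce))$ by orbit invariance of the latter. The remaining periodic cut-atoms lie in $p(SG)$, and by Lemma~\ref{l:corept-in-sgap} are either periodic cutpoints eventually mapping to periodic cutpoints at periodic edges of super-gaps (handled by a local rerun of the argument of Theorem~\ref{t:corexpli} treating the super-gap boundary as outside its own interior) or Siegel-atom boundaries, which under the standing hypothesis that $\ce$ is not itself this Siegel atom must carry critical points of $f|_\ce$ by Lemma~\ref{l:condens}.

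The main obstacle is verifying the ``not a Siegel atom'' hypothesis of Lemma~\ref{l:cutgorel} for $\ce_{rot}$ and $\ce_s$, which is only assumed for $\ce$ in the statement. For $\ce_s$ this is straightforward: by Lemma~\ref{l:leafinsgap}(4) every periodic Siegel gap lies properly inside a super-gap, so $\pc_{rot}\sm p(SG)$ is disjoint from all Siegel boundaries; thus $\ce_s$ is either $\0$, in which case the identity is trivial, or it contains a non-Siegel rotational atom and so cannot collapse to a single Siegel atom. For $\ce_{rot}$ one needs to exclude the pathology where $\pc_{rot}$ sits entirely inside one periodic Siegel atom; this uses the standing hypothesis on $\ce$ together with the fact that by Theorem~\ref{t:criticoreintr} $\ce$ is the invariant complete span of all of $\pc$, whence if $\ce_{rot}$ were a single Siegel atom then so would be $\ce$, contradicting the assumption. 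Once the non-Siegel status is established for all three cores, Lemma~\ref{l:cutgorel}(2) delivers the three identities simultaneously.
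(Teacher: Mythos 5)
Your overall strategy --- reduce to Lemma~\ref{l:cutgorel}(2) by showing that the generating set of periodic atoms for each core lies in $\ic(\crA(X))$ --- is the same as the paper's. But you route the key step through Theorem~\ref{t:corexpli}, whereas the paper routes it through Theorem~\ref{t:fxpt}, and this choice is what creates your gaps.

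The paper's argument is simpler at the decisive point: given a periodic cutpoint $x$ of $J$ not in a Fatou atom, it applies Theorem~\ref{t:fxpt} to (suitable powers of $f$ and) the components of $J\sm\{x\}$ and concludes directly that $x$ separates two elements of $\pc$, hence $x$ is a cutpoint of $\ce=\ic(\pc)$. That argument is indifferent to whether $p^{-1}(x)$ sits inside a super-gap. Because you instead invoke the ``Moreover'' clause of Theorem~\ref{t:corexpli}, which only speaks to periodic cutpoints \emph{outside} $p(SG)$, you are forced to treat cutpoints inside $p(SG)$ separately, and the phrase ``handled by a local rerun of the argument of Theorem~\ref{t:corexpli} treating the super-gap boundary as outside its own interior'' is not an argument. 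It is not at all clear that the machinery of $\si_d^N$-fixed edges, settled components, and so on localizes inside a single super-gap; you would need to set up and verify the analogue of Lemma~\ref{l:settle} and the case analysis in Theorem~\ref{t:corexpli} in that restricted setting. Nothing in the paper suggests this is how one should proceed, and you have not supplied the details.

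There is a second, independent gap. Your justification that $\ce_{rot}$ is not a Siegel atom rests on the claim that ``if $\ce_{rot}$ were a single Siegel atom then so would be $\ce$.'' That implication does not follow: $\ce_{rot}=\ic(\pc_{rot})$ while $\ce=\ic(\pc)$, and $\pc$ may contain periodic Fatou atoms of degree $>1$ or periodic cutpoints of rotation number zero that lie nowhere near the Siegel boundary. In that situation $\ce_{rot}$ could be exactly the Siegel boundary while $\ce$ is strictly larger, so your contradiction does not materialize. (Your argument for $\ce_s$, by contrast, is fine: $\pc_{rot}\sm p(SG)$ avoids Siegel boundaries by Lemma~\ref{l:leafinsgap}(4), and every point of $\pc_{rot}\sm p(SG)$ is a cutpoint of $\ce_s$ by Theorem~\ref{t:corexpli}, so $\ce_s=\ic(\pc(\ce_s))$ and Lemma~\ref{l:cutgorel}(2) applies directly.) To repair the argument you should follow the paper: apply Theorem~\ref{t:fxpt} to the components of $J\sm\{x\}$, which works uniformly and sidesteps both of the problems above.
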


\begin{proof}
By Theorem~\ref{t:criticoreintr} $\ce=\ic(\pc)$. Let us show that in
fact $\ce$ is the dynamical span of its periodic cutpoints and its
periodic Fatou atoms. It suffices to show that any periodic cutpoint
of $J$ either belongs to a Fatou atom or is a cutpoint of $\ce$.
Indeed, suppose that $x$ is a periodic cutpoint of $J$ which does
not belong to a Fatou atom. Then by Theorem~\ref{t:fxpt} applied to
different components of $J\sm \{x\}$ we see that $x$ separates two
periodic elements of $\pc$. Hence $x$ is a cutpoint of $\ce$ as
desired. By Lemma~\ref{l:cutgorel} $\ce=\ic(\crA(\ce))$. The
remaining claims can be proven similarly.
\end{proof}

There is a bit more universal way of stating a
similar result. Namely, instead of considering critical atoms of
$f|_\ce$ we can consider critical atoms of $f$ \emph{contained in $\ce$}.
Then the appropriately modified claim of Theorem~\ref{t:critcore}
holds without exception. Indeed, it holds trivially in the case when
$\ce$ is an invariant Siegel atom. Otherwise it follows from
Theorem~\ref{t:critcore} and the fact that the family of critical
atoms of $f|_\ce$ is a subset of the family of all critical atoms of
$f$ contained in $\ce$. We prefer the statement of
Theorem~\ref{t:critcore} to a more universal one because it allows
us not to include ``unnecessary'' critical points of $f$ which
happen to be endpoints of $\ce$; clearly, the results of
Theorem~\ref{t:critcore} hold without such critical points. Notice,
that the explanations given in this paragraph equally relate to
$\ce_1$ and $\ce_{rot}$.

In the dendritic case the following corollary holds.

\begin{cor}\label{t:critcoredendr}
If $J$ is a dendrite, then the following holds:
$$\ce=\ic(\pc_{rot})=\ic(\crA(\ce))$$.
\end{cor}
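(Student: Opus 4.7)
The plan is to derive Corollary \ref{t:critcoredendr} immediately from the two main results just established, namely Corollary \ref{c:corexpli2} and Theorem \ref{t:critcore}. The first equality $\ce=\ic(\pc_{rot})$ is nothing other than the conclusion of Corollary \ref{c:corexpli2}, since that corollary already asserts $\ce=\ce_1=\ce_s=\ce_{rot}=\ic(\pc_{rot})$ under the assumption that $J$ is a dendrite. So nothing new needs to be done for the left equality.

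For the second equality $\ce=\ic(\crA(\ce))$, I would invoke Theorem \ref{t:critcore}. That theorem requires the hypothesis that $\ce$ is not a (unique) Siegel atom. The key observation is that when $J_\sim$ is a dendrite, it contains no simple closed curve, so $\C\setminus J_\sim$ has no bounded components; in particular, $\sim$ has no Fatou gaps at all, and hence no Siegel gaps. Consequently, $\ce$ cannot be a Siegel atom, the hypothesis of Theorem \ref{t:critcore} is satisfied automatically, and the conclusion $\ce=\ic(\crA(\ce))$ follows directly.

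The only thing that needs any care is checking that ``no Fatou domains'' really does rule out the degenerate case excluded in Theorem \ref{t:critcore}: since Siegel atoms are boundaries of Siegel Fatou domains and these do not exist in the dendritic setting, $\ce$ cannot possibly equal a single Siegel atom. I do not expect any genuine obstacle here; the statement is essentially a specialization of the previously proved results to the dendritic case, combined with the easy topological observation that dendrites support no Fatou dynamics.
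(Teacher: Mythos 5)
Your proposal is correct and matches the approach the paper clearly intends (the paper just says ``Left to the reader''). The first equality is a direct quotation of Corollary~\ref{c:corexpli2}, and for the second equality you correctly note that the dendrite hypothesis rules out Fatou domains entirely — so $\ce$ cannot be a Siegel atom, the exceptional case of Theorem~\ref{t:critcore} does not arise, and that theorem gives $\ce=\ic(\crA(\ce))$.
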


\begin{proof}
Left to the reader.
\end{proof}

\section{Invariant quadratic gaps and their canonical laminations}\label{s:qgaps}

In this section, we assume that $U$ is a $\si_3$-invariant quadratic gap
and study its properties.
We then define \emph{canonical}
laminations, which correspond to these gaps and describe other
laminations which \emph{refine} the canonical ones. To lighten the
notation, we will write $\si$ instead of $\si_3$ throughout the
rest of the paper.

\subsection{Invariant quadratic gaps}\label{s:invquagap}

\begin{lem}\label{bndcrit}
Let $U$ be an invariant quadratic gap. Then there exists a unique
edge $\ell$ of $U$ with $|\ell|_U\ge\frac13$. Moreover,
$|\ell|_U<\frac 23$, on all holes $H''\ne H(\ell)$ of $U$ the map
$\si$ is a homeomorphism onto its image, and $|\si(H'')|=3|H''|$.
The following cases are possible:

\begin{enumerate}

\item $|\ell|_U=\frac13$.

\item $\ell$ is periodic of some period $k$, $|\si(\ell)|_U=\frac1{3^k-1}$
and $|\ell|_U=3^{k-1}\cdot\frac 1{3^k-1}$.


\end{enumerate}

\end{lem}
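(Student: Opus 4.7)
The plan is to combine Lemma~\ref{l:fx-maj} (majors are detected by $\si$-fixed points in hole closures) with the semi-conjugacy $\psi_U\colon\bd(U)\to\uc$ of $\si|_{\bd(U)}$ to $\si_2$, and to track how $\si$ wraps the major hole around $\uc$.

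For existence and uniqueness of the major, by Lemma~\ref{l:fx-maj} the majors of $U$ are exactly the edges whose hole closures contain one of the two $\si$-fixed points $0,\frac12$, giving at most two candidates. Since $\si_2$ has a unique fixed point on $\uc$, the set $\psi_U^{-1}(0)\subset\bd(U)$ is a single vertex or a single edge $F$: if $F$ is a vertex $v_*$, then $v_*$ cannot be an endpoint of any edge of $U$ (otherwise $\psi_U$ would collapse that edge to $0$, forcing $F$ to be an edge), so only the ``other'' $\si$-fixed point lies in the interior of a hole, and its edge is the unique major; if $F$ is an edge, a parallel argument shows $F$ itself is the unique major. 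For the bound $|\ell|_U<\frac23$, if $|\ell|_U\ge\frac23$ then $U'$ lies in an arc of length $\le\frac13$ on which $\si$ is at most one-to-one, incompatible with $\si|_{U'}$ being of degree $2$. The identity $|\si(H'')|=3|H''|$ for a non-major $H''$ is immediate: $\si|_{H''}$ is injective (as $|H''|<\frac13$), and $\si(H'')$ is a hole of $U$ by invariance.

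For the two cases, by Lemma~\ref{l:maj}(3) the edge $\ell$ is (pre)periodic or (pre)critical, and uniqueness of the major promotes this to ``critical or periodic'' (no proper image of $\ell$ can be the major). If $\ell$ is critical, its endpoints differ by $\frac13$ or $\frac23$ so $|\ell|_U\in\{\frac13,\frac23\}$, and the bound above yields $|\ell|_U=\frac13$, which is case~(1). If $\ell$ has minimal period $k\ge 1$, the images $\si(\ell),\ldots,\si^{k-1}(\ell)$ are non-majors (by uniqueness), and iterating the triple-length formula produces $|\ell|_U=3^{k-1}|\si(\ell)|_U$. To close the system I would track the $\si$-image of $H_U(\ell)$: writing $\ell=\ol{ab}$ with $H_U(\ell)$ the positive arc from $a$ to $b$, the image traverses length $3|\ell|_U\in[1,2)$ positively starting at $\si(a)$ and ending at $\si(b)$, so the positive arc from $\si(a)$ to $\si(b)$ has length $3|\ell|_U-1$; orientation-preservation of $\si$ (the $U$-arc at $\si(\ell)$ is the $\si$-image of the $U$-arc at $\ell$) identifies this positive arc with $H_U(\si(\ell))$, giving $|\si(\ell)|_U=3|\ell|_U-1$. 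Solving this with $|\ell|_U=3^{k-1}|\si(\ell)|_U$ yields $|\ell|_U=3^{k-1}/(3^k-1)$ and $|\si(\ell)|_U=1/(3^k-1)$.

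The main obstacle I anticipate is the uniqueness in Step~1: ruling out the configuration where the two $\si_3$-fixed points $0$ and $\frac12$ lie in closures of two \emph{distinct} holes of $U$. This requires careful use of the semi-conjugacy $\psi_U$ and the observation that a $\si_3$-fixed vertex $v_*\in U'$ cannot bound any edge of $U$, since otherwise $\psi_U^{-1}(0)$ would consist of more than a single vertex.
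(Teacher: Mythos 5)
Your route to existence and uniqueness of the major is genuinely different from the paper's, and it has a gap. The paper's proof of this part is short and direct: take the edge $\ell$ with $|\ell|_U$ maximal — then $|\ell|_U\ge\frac13$, since otherwise the hole length would triple under $\si$ and contradict maximality — and observe that if $U'$ were disjoint from two closed arcs each of length $\ge\frac13$, then $\si_3|_{U'}$ would be injective, contradicting the fact that $\si|_{U'}$ is two-to-one. That observation settles both uniqueness and the bound $|\ell|_U<\frac23$ in one stroke, with no appeal to $\psi_U$ or Lemma~\ref{l:fx-maj}.

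Your uniqueness argument instead leans on the assertion that $\psi_U^{-1}(0)$ is a single vertex or a single leaf, and on the derived observation that a $\si$-fixed vertex of $U'$ cannot bound an edge. The ``preimages are points or single leaves'' property is what the paper quotes from \cite{blolev02a} for a \emph{periodic Fatou gap of a lamination}, where the basis is a Cantor set. But Lemma~\ref{bndcrit} applies to an arbitrary stand-alone invariant quadratic gap, and in particular to the extended caterpillar gaps that the paper later isolates via Lemma~\ref{l:perpic}; for those, by Lemma~\ref{cantor}, $U'$ is a Cantor set together with countably many isolated points, and $\psi_U$ collapses entire caterpillar chains. Concretely, take $c=\ol{0\,\frac13}$: the orbit of $c$ is $\{0,\frac13\}\mapsto\{0\}$, which stays in $\ol{L(c)}$, so $G(c)$ is an extended caterpillar gap in which \emph{both} $\si$-fixed points $0$ and $\frac12$ are vertices, joined along $\bd(G(c))$ by the infinite caterpillar chain of edges; $\psi_{G(c)}^{-1}(0)$ contains that whole chain together with both endpoints, so it is neither a single vertex nor a single leaf, and the dichotomy you set up (``$F$ is a vertex not bounding any edge'' versus ``$F$ is an edge'') collapses. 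Since the trichotomy regular-critical/caterpillar/periodic-type is only established \emph{after} this lemma, you cannot restrict to gaps with Cantor basis here without circularity.

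The remainder of your argument is sound and reaches the paper's conclusions by slightly different means. You use Lemma~\ref{l:maj}(3) plus uniqueness of the major to promote ``(pre)periodic or (pre)critical'' to ``periodic or critical''; the paper instead shows directly that a non-periodic major $\ell$ with $|\ell|_U>\frac13$ is impossible by summing the lengths of the $2^n$ iterated pullbacks of $H(\ell)$ among the holes of $U$, obtaining total length $3|H(\ell)|>1$. Both work, once uniqueness is in hand. Your wrap-around derivation of $|\si(\ell)|_U=3|\ell|_U-1$, the iteration $|\ell|_U=3^{k-1}|\si(\ell)|_U$, and the resulting formulas agree exactly with the paper's computation.
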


\begin{proof}
Take an edge $\ell$ of $U$ with $|\ell|_U$ maximal. If
$|\ell|_U<1/3$, then $|\si(\ell)|_U=3 |\ell|_U$, a contradiction.
Hence, $|\ell|_U\ge 1/3$.
Observe that if a set $A\subset\uc$ lies in the
complement to two closed arcs of length $\ge 1/3$ each, then the
restriction of $\si$ to $A$ is injective. This implies that all
holes $H''\ne H(\ell)$ of $U$ are shorter than $\frac13$ and that
$|\ell|_U<\frac23$ (here we use the fact that $\si|_{U'}$ is
two-to-one).

Clearly, $|\ell|_U$ can be equal to $\frac13$ (just take
$\ell=\ol{\frac13 \frac23}$ and assume that $U$ is the convex hull of
the set of all points $x\in \uc$ with orbits outside the arc $(\frac13,
\frac23)$). This situation corresponds to case (1) of the lemma.

Suppose  $|\ell|_U>\frac13$ but $\ell$ is not periodic. Since there
are $2^n$ components of $\si^{-n}(H(\ell))$ that are holes of $U$,
and the length of each such component is $|\ell|_U/3^n$, the sum of
lengths of all such components equals
$|H(\ell)|(1+\frac23+\dots+\frac{2^n}{3^n}+\dots)=3|H(\ell)|>1,$ a
contradiction. Thus, $\ell$ is periodic.

Finally, let $k$ be the period of $\ell$.
We show that $|\ell|_U=\frac{3^{k-1}}{3^k-1}$.
Let $|\si(\ell)|_U=x$.
Then $|\si^{i+1}(\ell)|=3^ix$ for $i=1,\dots,k-1$, and hence $|H(\ell)|=3^{k-1}x$.
On the other hand,
$|\si(\ell)|_U=3|\ell|_U-1$ (recall that $|\ell|_U<\frac23$).
Thus, $x=3^kx-1$, $x=\frac1{3^k-1}$ and
$|\ell|_U=3^{k-1}\cdot\frac 1{3^k-1}$.
This implies that
$|\si(\ell)|_U=(3|\ell|_U-1)=\frac1{3^k-1}$.
\end{proof}

By Definition~\ref{d:0major}, the unique leaf $\ell$ whose existence was proven in
Lemma~\ref{bndcrit} is the unique \emph{major (leaf)} of an
invariant quadratic gap $U$. It will be from now on denoted by
$M(U)$. Lemma~\ref{bndcrit} implies a simple description of the
basis $U'$ of $U$ if the major $M(U)$ is given.

\begin{lem}\label{descru}
The basis $U'$ of $U$ is the set of points $x$ whose
orbits are disjoint from $H(M(U))$. All edges of $U$ are preimages of $M(U)$.
\end{lem}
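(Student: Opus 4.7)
The plan is a two-part argument, one inclusion per direction for the first statement, and a direct appeal to Lemma~\ref{l:maj} for the second.

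\textbf{First inclusion.} If $x\in U'$, then invariance of $U$ gives $\si^n(x)\in U'$ for every $n\ge 0$. Since $H(M(U))$ is a hole of $U$, it is disjoint from $U'$, so the orbit of $x$ avoids $H(M(U))$. This direction is immediate.

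\textbf{Second (harder) inclusion.} Suppose $x\in\uc$ is not in $U'$. Then $x$ lies in some open hole $H$ of $U$. If $H=H(M(U))$, we are done. Otherwise $H$ is a non-major hole, so by Lemma~\ref{bndcrit} we have $|H|<\tfrac13$. In particular the two endpoints of $H$ have distinct $\si$-images, so by the definition of an invariant laminational set (Definition~\ref{d:lamset}) the positively oriented arc $\si(H)$ is again a hole of $\si(U)=U$, and $|\si(H)|=3|H|$. As long as the length of the image remains below $\tfrac13$, iteration produces a sequence of holes of $U$ whose lengths grow geometrically by a factor of $3$, and $\si$ acts injectively on each of them (so $\si^n$ acts injectively on $H$ and carries $x$ to $\si^n(H)$). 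After finitely many steps one obtains $|\si^n(H)|\ge\tfrac13$; but Lemma~\ref{bndcrit} says $H(M(U))$ is the \emph{only} hole of $U$ of length $\ge\tfrac13$, so $\si^n(H)=H(M(U))$, forcing $\si^n(x)\in H(M(U))$. Thus the orbit of $x$ meets $H(M(U))$, contradicting the hypothesis.

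\textbf{Second statement.} Let $\ell$ be an edge of $U$. By Lemma~\ref{l:maj}(3), some iterate $\si^n(\ell)$ is a major of $\si^n(U)=U$ (invariance again). Since by Lemma~\ref{bndcrit} the major $M(U)$ is unique, $\si^n(\ell)=M(U)$, i.e.\ $\ell$ is a preimage of $M(U)$.

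\textbf{Main obstacle.} The only genuinely delicate point is the inductive tracking of $\si^n(H)$ in the second inclusion: one must make sure the iterate does not ``overshoot'' $H(M(U))$ (which cannot happen since lengths below $\tfrac13$ triple to lengths below $1$, and any hole of $U$ of length $\ge\tfrac13$ must be $H(M(U))$ itself by Lemma~\ref{bndcrit}) and that the images remain honest holes of $U$ rather than collapsing (ruled out since non-major holes have length strictly less than $\tfrac13$, so their edges cannot be critical). Once these bookkeeping issues are disposed of, the argument is short.
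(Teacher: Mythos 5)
Your proof is correct and follows essentially the same route as the paper's: the paper also handles the second inclusion by noting that a point $x\notin U'$ sits in a hole $H(\ell)$ behind some edge $\ell$, invokes Lemma~\ref{l:maj} to push $\ell$ forward to a major of the invariant gap $U$, and then uses the uniqueness of the major from Lemma~\ref{bndcrit} to conclude that iterate is $M(U)$ and that $\si^n(x)\in H(M(U))$; the only difference is that you unwind the length-tripling mechanism of Lemma~\ref{l:maj}(3) inline rather than citing it for the first claim (you do cite it for the second), which is fine but not a distinct argument.
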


\begin{proof}
All points of $U'$ have orbits disjoint from $H(M(U))$. On the other
hand, if $x\in \uc\sm U'$, then $x$ lies in a hole of $U$ behind a leaf
$\ell$. By Lemma~\ref{l:maj}, the orbit of $\ell$ contains $M(U)$. Hence for
some $n$ we have $\si^n(\ell)=M(U)$, $\si^n(H(\ell))=H(M(U))$, and
$\si^n(x)\in H(M(U))$.
\end{proof}

Thus, invariant quadratic gaps $U$ are generated by their majors. It
is natural to consider the two cases from Lemma~\ref{bndcrit}
separately. We begin with the case when an invariant quadratic gap
$U$ has a periodic major $M(U)=\ol{ab}$ of period $k$ and
$H(M(U))=(a, b)$. Set $a''=a+\frac13$ and $b''=b-\frac13$. Set
$M''(U)=\ol{b''a''}$. Consider the set $N(U)$ of all points with
$\si^k$-orbits contained in $[a, b'']\cup [a'', b]$ and its convex
hull $V(U)=\ch(N(U))$ \emph{(this notation is used in several lemmas
below)}.

\begin{lem}\label{vassal}
Assume that $M(U)$ is periodic. Then $N(U)$ is a Cantor set; $V(U)$
is a periodic stand alone quadratic gap of period $k$.
\end{lem}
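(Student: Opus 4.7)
The plan is to realize $N(U)$ as the attractor of a two-map iterated function system on $[a,b]$ and then verify the laminational-set axioms and compute the degree.

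First I would pin down the geometry. Since $|(a,b)|=\tfrac{3^{k-1}}{3^k-1}<\tfrac23$, the arcs $I_1=[a,b'']$ and $I_2=[a'',b]$ are disjoint closed subarcs of length $\tfrac{1}{3(3^k-1)}<\tfrac13$ lying in two different closed thirds of $\uc$, so $\si$ is injective on each. Because $\si(a)=\si(a'')$ and $\si(b'')=\si(b)$, the map $\si$ sends $I_1$ and $I_2$ onto the common arc $[\si(a),\si(b)]$ as orientation-preserving homeomorphisms. For $1\le i\le k-1$, the arc $[\si^i(a),\si^i(b)]$ has length $\tfrac{3^{i-1}}{3^k-1}<\tfrac13$, so $\si^{k-1}$ restricts to an increasing homeomorphism from $[\si(a),\si(b)]$ onto $[a,b]$. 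Composing, $\si^k|_{I_i}\colon I_i\to[a,b]$ is an orientation-preserving homeomorphism for $i=1,2$.

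Next I would apply the standard iterated-function-system construction. Let $\phi_i=(\si^k|_{I_i})^{-1}\colon[a,b]\to I_i$; the images $I_1,I_2\subset[a,b]$ are disjoint, so the attractor
\[
N(U)=\bigcap_{n\ge 0}\,\bigcup_{(i_1,\dots,i_n)\in\{1,2\}^n}\phi_{i_1}\circ\cdots\circ\phi_{i_n}([a,b])
\]
is a Cantor set, $\si^k(N(U))=N(U)$, and $\si^k|_{N(U)}$ is uniformly $2$-to-$1$ with fibres $\{\phi_1(y),\phi_2(y)\}$. Since $\si^{k-1}$ is injective on $[\si(a),\si(b)]$ and $\si^{k-1}\circ(\si\circ\phi_i)=\mathrm{id}_{[a,b]}$ for $i=1,2$, one obtains $\si\circ\phi_1=\si\circ\phi_2$, so $\si(N(U)\cap I_1)=\si(N(U)\cap I_2)=\si(N(U))$.

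I would then check that $V(U)=\ch(N(U))$ is a laminational set of exact period $k$. Setting $N_i=\si^i(N(U))$ for $0\le i<k$, each $N_i$ sits in $[\si^i(a),\si^i(b)]$ using the injectivity above. These host arcs are the $k$ pairwise disjoint holes of $U$ behind the distinct periodic edges $\si^i(M(U))$, so the convex hulls $\ch(N_i)$ have pairwise disjoint interiors and $V(U)$ has period exactly $k$. For the edge-mapping axiom, the holes of $N_0$ fall into three classes: the outer hole $(b,a)$, the middle hole $(b'',a'')$, and Cantor gaps contained in $I_1$ or $I_2$. The first two both map to $(\si(b),\si(a))$, the outer hole of $N_1\subset[\si(a),\si(b)]$; the middle case uses $\si(b'')=\si(b)$ and $\si(a'')=\si(a)$. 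A Cantor gap inside $I_j$ maps homeomorphically under $\si|_{I_j}$ onto a gap of $\si(N_0\cap I_j)=N_1$. For $N_i$ with $1\le i\le k-1$, the edge-mapping check is immediate because $\si|_{[\si^i(a),\si^i(b)]}$ is injective.

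Finally, for the degree, the semiconjugacy $\psi_{V(U)}$ from Section~\ref{ss:st} conjugates $\si^k|_{\bd(V(U))}$ to a circle map $\hat\si$ that is either an irrational rotation or $\si_m$ for some $m\ge 2$; since the $2$-to-$1$ property of $\si^k|_{N(U)}$ passes to the quotient, $\hat\si=\si_2$, and $V(U)$ is a stand-alone \emph{quadratic} gap. The only delicate step in the whole argument is the edge-mapping check for the middle hole $(b'',a'')$ and the correct interleaving of Cantor-gap images from $I_1$ and $I_2$ into the gaps of $N_1$; both are controlled by the identifications $\si(a)=\si(a'')$, $\si(b)=\si(b'')$ together with $\si\circ\phi_1=\si\circ\phi_2$.
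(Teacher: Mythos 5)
Your proof is correct and follows essentially the same route as the paper's: you identify the same two arcs $[a,b'']$ and $[a'',b]$ mapping onto $[\si(a),\si(b)]$ and then returning homeomorphically to $[a,b]$ under $\si^{k-1}$, which the paper describes as a ``2-horseshoe of period $k$'' and you formalize as an iterated function system with maps $\phi_1,\phi_2$. The only difference is that you spell out the ``standard argument'' and ``the remaining claim easily follows'' steps (Cantor-set structure, the hole-to-hole verification including the middle hole $(b'',a'')$, and the degree-$2$ computation via $\psi_{V(U)}$), which the paper leaves implicit.
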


\begin{proof}
Under the action of $\si$ on $[a, b]$ the arc $[a, a'']$ wraps around
the circle once, and the arc $[a'', b]$ maps onto the arc $[\si(a),
\si(b)]$ homeomorphically. Similarly, we can think of $\si|_{[a,
b]}$ as homeomorphically mapping $[a, b'']$ onto the arc $[\si(a),
\si(b)]$ and wrapping $[b'', b]$ around the circle once.
Thus, first the arcs $[a, b'']$ and $[a'', b]$ map homeomorphically to
the arc $[\si(a), \si(b)]$ (which is the closure of a hole of
$U$, see Lemma~\ref{bndcrit}). Then, under further iterations of
$\si$, the arc $[\si(a), \si(b)]$ maps homeomorphically onto
closures of distinct holes of $U$ until $\si^{k-1}$ sends
it, homeomorphically, onto $[a, b]$. This
generates the quadratic gap $V(U)$ contained in the strip between
$\ol{ab}$ and $\ol{a''b''}$. In the language of one-dimensional
dynamics one can say that closed intervals $[a, b'']$ and $[a'', b]$
form a 2-horseshoe of period $k$. A standard argument shows that
$N(U)$ is a Cantor set. The remaining claim easily follows.
\end{proof}


\begin{figure}
\includegraphics[width=6cm]{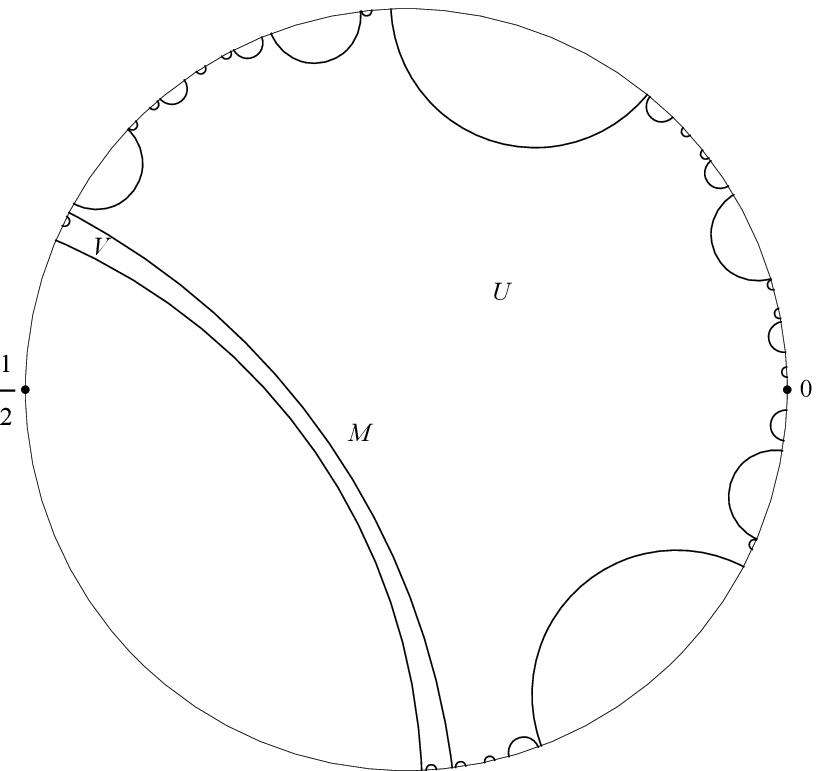}
\includegraphics[width=6cm]{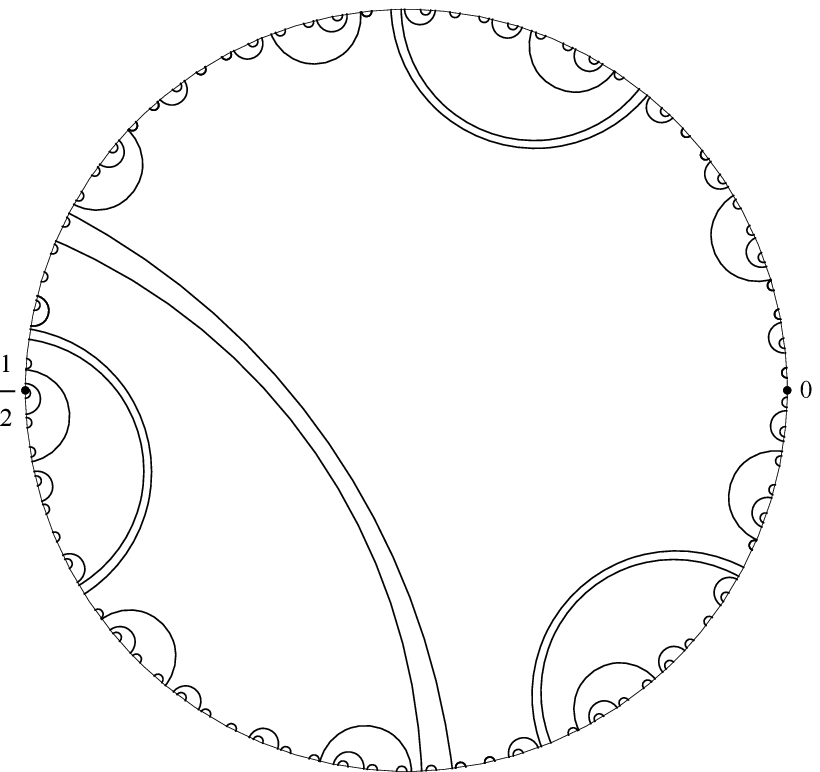}
{\caption{Left: a quadratic invariant gap $U$ of periodic type and
its vassal $V$. Right: its canonical lamination (we draw geodesics
in the Poincar\'e metric instead of chords to make the pictures look
better).}}
\end{figure}


We call $V(U)$ the \emph{vassal} (gap) of $U$. We also
define another type of gap called a \emph{caterpillar} (gap). Let
$Q$ be a periodic gap of period $k$. Suppose that $\bd(Q)$ consists
of a periodic leaf $\ell_0=\ol{xy}$ of period $k$, a critical leaf
$\ell_{-1}=\ol{yz}$ concatenated to it, and a countable chain
of leaves $\ell_{-n}$, concatenated to $\ell_{-1}$ and accumulating
at $x$ ($\ell_{-r-1}$ is concatenated to $\ell_{-r}, r=1, 2,
\dots$). Suppose that $\si^k(x)=x$, $\si^k(\{y, z\})=y$ and $\si^k$
maps each $\ell_{-r-1}$ to $\ell_{-r}$ (all leaves are shifted by
one towards $\ell_0$ except for $\ell_0$ which maps to itself and
$\ell_{-1}$ which collapses to the point $y$). The leaf $\ell_0$ is
called the \emph{head} of $Q$. Similar gaps are already useful for
quadratic laminations (see \cite{thu85}) where the invariant gap
with edges $\ol{0\frac 12}, \ol{\frac 12\frac 14}, \dots, \ol{\frac
1{2^n}\frac 1{2^{n+1}}}, \dots$ is considered). Lemma~\ref{catpil}
gives examples of caterpillar gaps and is left to the reader.


\begin{lem}\label{catpil}
Suppose that $M(U)=\ol{ab}$ is periodic of period $k$. Then one can construct
two \ca\, gaps with head $M(U)$ such that their bases are contained
in $\ol{H(M(U))}$. In the first of them, the critical leaf is
$\ol{aa'}$, and in the second one the critical leaf is $\ol{bb'}$.
\end{lem}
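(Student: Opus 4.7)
The plan is to construct each caterpillar gap as the closure of the backward orbit of a critical leaf under $\si^k$. Throughout I write $a''=a+\tfrac13$ and $b''=b-\tfrac13$. By Lemma~\ref{bndcrit} we have $|H(M(U))|=\tfrac{3^{k-1}}{3^k-1}\in[\tfrac13,\tfrac12)$, so $a''$ and $b''$ both lie in the closed arc $[a,b]=\ol{H(M(U))}$, and $\ol{aa''}$, $\ol{bb''}$ are critical leaves since $\si(a)=\si(a'')$ and $\si(b)=\si(b'')$. The construction requires both endpoints $a$ and $b$ of $M(U)$ to be $\si^k$-fixed, a standard consequence of the invariance of $U$ and the strict growth of the hole-lengths $\tfrac{3^i}{3^k-1}$ along the orbit of $M(U)$.

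The key technical step is to show that $\si^k$ restricts to an orientation-preserving homeomorphism $[a'',b]\to[a,b]$ with $b$ as a repelling fixed point of expansion $3^k$. Since $|[a'',b]|=|H(M(U))|-\tfrac13=\tfrac{1}{3(3^k-1)}<\tfrac13$, the map $\si|_{[a'',b]}$ is injective with image the short arc $[\si(a),\si(b)]$ of length $\tfrac{1}{3^k-1}$. By Lemma~\ref{bndcrit}, each intermediate image $[\si^i(a),\si^i(b)]$ for $i=1,\dots,k-1$ has length $\tfrac{3^{i-1}}{3^k-1}\le\tfrac{3^{k-2}}{3^k-1}<\tfrac13$, so $\si$ remains injective on each of them, and $\si^{k-1}$ carries $[\si(a),\si(b)]$ homeomorphically onto $[a,b]$. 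The inverse $\tau=(\si^k|_{[a'',b]})^{-1}\colon[a,b]\to[a'',b]$ is therefore a contraction of factor $3^{-k}$ with unique fixed point $b$.

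With $\tau$ in hand, set $p_0=a$, $p_1=a''$, and $p_n=\tau(p_{n-1})$ for $n\ge 2$; the sequence $(p_n)$ is strictly monotone and converges to $b$. Define $\ell_0=M(U)=\ol{ab}$, $\ell_{-1}=\ol{aa''}$, and $\ell_{-n}=\ol{p_{n-1}p_n}$ for $n\ge 2$. By construction $\si^k(\ell_0)=\ell_0$, $\si^k(\ell_{-1})=\{a\}$, and $\si^k(\ell_{-n})=\ell_{-(n-1)}$ for $n\ge 2$; the leaves are pairwise non-crossing, concatenated at consecutive $p_n$, and accumulate at $b$. Together they bound a stand-alone periodic gap of period $k$ whose basis $\{a,b\}\cup\{p_n\colon n\ge 1\}$ lies in $\ol{H(M(U))}$; this is the desired first caterpillar gap. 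The second caterpillar is produced by the mirror construction: $\si^k$ restricts analogously to a homeomorphism $[a,b'']\to[a,b]$ fixing $a$, and pulling $\ol{bb''}$ back by the corresponding contraction yields the matching chain accumulating at $a$.

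The main subtlety is verifying that $\si$ acts injectively on each intermediate arc $[\si^i(a),\si^i(b)]$ for $i<k$, which reduces to the numerical bound $\tfrac{3^{k-2}}{3^k-1}<\tfrac13$; this is immediate from the length formulas of Lemma~\ref{bndcrit}, and once in hand makes the dynamical structure of the caterpillar gap essentially automatic.
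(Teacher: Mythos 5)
Your construction is correct. The paper explicitly leaves this lemma to the reader, so there is no proof in the source to compare against; but what you do — pulling back the critical leaf $\ol{aa''}$ iteratively under $\tau=(\si^k|_{[a'',b]})^{-1}$, which is a well-defined contraction because $|[a'',b]|=\tfrac{1}{3(3^k-1)}<\tfrac13$ and every intermediate arc $[\si^i(a),\si^i(b)]$, $1\le i\le k-1$, is shorter than $\tfrac13$ — is exactly the natural construction the authors have in mind. Two very minor remarks. First, the range of $|H(M(U))|$ is $(\tfrac13,\tfrac12]$, not $[\tfrac13,\tfrac12)$ (the extreme value $\tfrac12$ occurs for $k=1$, when $M(U)=\di$); this does not affect your argument since all you need is $\tfrac13<|H(M(U))|<\tfrac23$. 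Second, the cleanest way to see that $a$ and $b$ are $\si^k$-fixed rather than swapped is to invoke the hole-invariance condition in Definition~\ref{d:lamset}: since each hole in the orbit of $M(U)$ has length less than $\tfrac23$, the positively oriented hole $(\si^i(a),\si^i(b))$ maps to the positively oriented hole $(\si^{i+1}(a),\si^{i+1}(b))$ at each step, so after $k$ steps the ordered pair $(a,b)$ returns to itself; your appeal to "strict growth of the hole-lengths" is a bit oblique but the conclusion is right. (You also correctly read $\ol{aa'}$, $\ol{bb'}$ in the lemma statement as $\ol{aa''}$, $\ol{bb''}$ in the paper's own notation.)
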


We call the \ca\, gaps from Lemma~\ref{catpil} \emph{canonical \ca\,
gaps of $U$}. A critical edge $c$ of a canonical \ca\, gap defines
it, so this caterpillar gap is denoted by $C(c)$. We denote its
basis by $C'(c)$. To study related invariant quadratic gaps we first
prove the following general lemma in which we adopt a different
point of view. Namely, any leaf $\ell$ which is not a diameter
defines an open arc $L(\ell)$ of length greater than $\frac12$ (in
particular, a critical leaf $c$ defines an arc $L(c)$ of length
$\frac23$). Let $\Pi(\ell)$ be the set of all points with orbits in
$\ol{L(\ell)}$.

\begin{lem}\label{pic}
Suppose that $c$ is a critical leaf. The set $\Pi(c)$ is non-empty,
closed and forward invariant. A point $x\in \Pi(c)$ has two
preimages in $\Pi(c)$ if $x\ne \si(c)$, and three preimages in
$\Pi(c)$ if $x=\si(c)$.
The convex hull $G(c)$ of $\Pi(c)$ is a stand alone invariant quadratic gap.
If $x=\si(c)\in \Pi(c)$, we have the
situation corresponding to case {\rm(1)} of Lemma~\ref{bndcrit}.
\end{lem}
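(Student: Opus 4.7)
The plan is to analyze everything through the fundamental-domain decomposition of $\overline{L(c)}$ under $\si=\si_3$. Writing $c=\ol{ab}$ with $\si(a)=\si(b)$, the chord $c$ divides $\uc$ into the short arc of length $\frac13$ and $L(c)$ of length $\frac23$. Since $b-a\equiv \pm\frac13\pmod 1$, the arc $\overline{L(c)}$ is the union of two adjacent closed fundamental domains $[a, a+\frac13]$ and $[a+\frac13, b]$ for $\si$, meeting only at the common endpoint $a+\frac13$, and each maps homeomorphically onto $\uc$ (mod endpoints).

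Closedness and forward invariance of $\Pi(c)$ are immediate: $\Pi(c)=\bigcap_{n\ge 0}\si^{-n}(\overline{L(c)})$, and the defining condition is closed under one application of $\si$. For nonemptiness I would observe that the two $\si$-fixed points $0$ and $\frac12$ are at distance $\frac12$ apart on $\uc$, and so the complement of $\overline{L(c)}$, an open arc of length $\frac13<\frac12$, can contain at most one of them; hence $\overline{L(c)}$ contains a fixed point, which automatically lies in $\Pi(c)$. (A standard horseshoe argument on the two fundamental domains will also give that $\Pi(c)$ is uncountable, which I will need below.)

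For the preimage count, fix $x\in\Pi(c)$. Each of the three closed fundamental domains of $\si$ contains exactly one $\si$-preimage of $x$; two of these lie in $\overline{L(c)}$, and a preimage that lies in $\overline{L(c)}$ automatically has its forward orbit in $\overline{L(c)}$ by forward invariance of $\Pi(c)$, hence lies in $\Pi(c)$. A point in the open short arc contributes a preimage outside $\Pi(c)$, while the shared endpoints $a, b$ contribute a preimage to both $\overline{L(c)}$ and to the short arc (they map to $\si(c)$). Thus $x$ has exactly two preimages in $\Pi(c)$ unless $x=\si(c)$, in which case both $a$ and $b$ are preimages in $\overline{L(c)}$ and the third preimage, lying in the short arc, also maps to $\si(c)\in\Pi(c)$ and hence lies in $\Pi(c)$, giving three preimages.

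For the gap structure of $G(c)=\ch(\Pi(c))$, nonemptiness of the interior follows from $\Pi(c)$ being uncountable and lying in $\overline{L(c)}$ of length $<1$; every component of $\uc\sm \Pi(c)$ is a hole of $G(c)$, the longest containing the short arc. For a hole $H=(u,v)$ of $G(c)$ distinct from the big one, both endpoints lie in $\Pi(c)$ and $|H|<\frac13$ (since $H\subset L(c)$ avoids a fundamental-domain boundary), so $\si|_{\ol H}$ is a homeomorphism onto $[\si(u),\si(v)]$; I will check that no point of $\Pi(c)$ lies inside $(\si(u),\si(v))$ by lifting any such point through $\si|_{\overline{L(c)}}$ back into $H$ and contradicting $H$ being a hole. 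For the big hole at $c$: either $\si(c)\in\Pi(c)$, in which case $a,b\in\Pi(c)$, the edge is $c$ itself, and it is critical; or $\si(c)\notin\Pi(c)$, in which case the maximal edge $\ol{a'b'}$ has endpoints strictly inside $L(c)$ determined by the return of the $\si(c)$-orbit, and is again critical (its endpoints share image $\si(c)$). This, together with the preimage count, shows $\si|_{\bd G(c)}$ factors as a monotone map followed by a degree-$2$ covering, verifying that $G(c)$ is a stand alone invariant quadratic gap. Finally, in the case $\si(c)\in\Pi(c)$, the edge $c$ of $G(c)$ satisfies $|c|_{G(c)}=\tfrac13$, which by the uniqueness of the major in Lemma \ref{bndcrit} makes $c=M(G(c))$ and places us in case (1).

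The main obstacle will be the gap-structure verification in the case $\si(c)\notin\Pi(c)$: pinning down the precise boundary edge near $c$ and ruling out that additional components of $\uc\sm\Pi(c)$ slip across the fundamental-domain boundary $a+\tfrac13$ requires careful bookkeeping of the horseshoe dynamics of $\si$ on the two fundamental domains of $\overline{L(c)}$.
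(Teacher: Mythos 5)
Your overall structure matches the paper's: closedness/invariance by definition, non-emptiness via a fixed point, the preimage count via fundamental domains, and the gap structure via showing that each hole of $\Pi(c)$ maps to a hole (or collapses). The preimage count and the last claim are essentially right (modulo a slight muddle about which preimages of $\si(c)$ land in the short arc — in fact all three, $a$, $b$, $a+\tfrac13$, lie in $\overline{L(c)}$). However, there is a genuine error in your treatment of the big hole $T$ when $\si(c)\notin\Pi(c)$.

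You claim that in this case the edge $\ol{a'b'}$ of $G(c)$ bounding $T$ is \emph{critical} with $\si(a')=\si(b')=\si(c)$. This is false. If $\si(a')=\si(b')=\si(c)$, then $a',b'\in\si^{-1}(\si(c))=\{a,\,b,\,a+\tfrac13\}$; since $a,b\notin\Pi(c)$ when $\si(c)\notin\Pi(c)$, you would be forced to take $a'=b'=a+\tfrac13$, which makes $T$ the entire circle minus a point and $\Pi(c)$ a singleton — contradicting that $\Pi(c)$ contains a fixed point and (as you note) is uncountable. What actually happens is that $\ol{a'b'}$ is a \emph{periodic} non-critical leaf: this is case (2) of Lemma \ref{bndcrit} (the gap of periodic type), and the critical chord $c$ lies strictly inside the hole $H_{G(c)}(\ol{a'b'})$ without being an edge of $G(c)$. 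As a result, your plan does not verify the hole-mapping condition for $T$: you cannot dispose of $T$ by saying it collapses to a point. The paper handles it directly — if some $z\in(\si(a'),\si(b'))\cap\Pi(c)$, lift $z$ to a point $t\in T\cap L(c)$ with $\si(t)=z$, whence $t\in\Pi(c)$, contradicting $T$ being a hole. This same lifting argument also removes your need for the parenthetical claim that a small hole $H\subset L(c)$ ``avoids a fundamental-domain boundary'' and therefore has $|H|<\tfrac13$; that justification is not valid in general (the boundary point $a+\tfrac13$ is in $\Pi(c)$ only when $\si(c)\in\Pi(c)$), but the lifting argument works regardless of $|H|$, so you should replace that length estimate with the lifting argument as the paper does.
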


\begin{proof}
It is easy to see that the set $\Pi(c)$ is closed and forward invariant;
it is non-empty because it contains at least one fixed point. Let $x\in
\Pi(c)$. If $x\ne \si(c)$, then of its three preimages one belongs to
$\uc\sm \ol{L(c)}$ while two others are in $\ol{L(c)}$, and hence, by definition,
also in $\Pi(c)$. Suppose that $x=\si(c)$ (and so, since by the
assumption $x\in \Pi(c)$, the orbit of $c$ is contained in $\ol{L(c)}$).
Then the entire triple $\si^{-1}(\si(c))$ is contained in $\ol{L(c)}$ and,
again by definition, in $\Pi(c)$.

To prove the last claim of the lemma, we prove that \emph{any hole
$I$ of $\Pi(c)$ except for the hole $T$, whose closure contains the
endpoints of $c$, maps to a hole of $\Pi(c)$}. Indeed, otherwise
there is a point $y\in I$ such that $\si(y)$ is a point of $\Pi(c)$.
Since $I\subset \ol{L(c)}$, then $y\in \Pi(c)$, a contradiction.
Consider the hole $T=(a,b)$ defined above. If $T=\uc\sm \ol{L(c)}$,
there is nothing to prove as in this case the leaf $c$ is a critical
edge of $G(c)$ that maps to a point of $\Pi(c)$. Suppose now that
$T\ne \uc\sm \ol{L(c)}$ and that there is a point $z\in (\si(a),
\si(b))\cap \Pi(c)$. Then there is a point $t\in T\cap L(c)$ with
$\si(t)=z$ and hence $t\in \Pi(c)$, a contradiction. Thus, $(\si(a),
\si(b))$ is a hole of $\Pi(c)$, and $\ol{\si(a)\si(b)}$ is an edge
of $G(c)$. All this implies easily that $G(c)$ is an invariant gap,
and it follows from the definition that it is quadratic. The last
claim easily follows.
\end{proof}

Below we will use the notation $G(c)=\ch(\Pi(c))$. Let us relate
invariant quadratic gaps defined in terms of periodic majors (see
Lemma~\ref{vassal}) and caterpillar gaps $C(c)$ (see
Lemma~\ref{catpil}) to gaps $G(c)$.

\begin{lem}\label{picper}
Let $M(U)=\ol{ab}$ be periodic of period $k$ and $c$ be a critical
leaf. Then {\rm(1)} if the endpoints of $c$ belong to $H(M(U))$ then
$U'=\Pi(c)$, {\rm(2)} if one endpoint of $c$ is $a$ or $b$, then
$\Pi(c)$ consists of $U'$, $C'(c)$, and all (iterated) pullbacks of
$C'(c)$ to holes of $U$.
\end{lem}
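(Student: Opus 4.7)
Both parts characterize the set $\Pi(c)$ of $\si$-orbits avoiding the length-$1/3$ arc $(c_1,c_2)$ and relate it to the basis $U'$, which consists of orbits avoiding $H(M(U))=(a,b)$. The key quantitative input in both cases is the identity $L-\tfrac13=\tfrac1{3(3^k-1)}$, where $L=|H(M(U))|$.

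For part (1), since both endpoints of $c$ lie in $(a,b)$ we have $(c_1,c_2)\subsetneq(a,b)$, so $\ol{L(c)}\supset\uc\sm(a,b)$ and $U'\subset\Pi(c)$ is immediate. For the reverse inclusion, I will use that $G(c):=\ch(\Pi(c))$ is a stand-alone invariant quadratic gap (Lemma~\ref{pic}). Since the full $\si$-orbits of $a$ and $b$ lie in $U'\subset\uc\sm(a,b)\subset\ol{L(c)}$, we have $a,b\in\Pi(c)$. The plan is to show that $\ol{ab}$ is actually an edge of $G(c)$; then its hole $(a,b)$ has length $L\ge\tfrac13$, so by uniqueness of the major (Lemma~\ref{bndcrit}) applied to $G(c)$ we get $M(G(c))=\ol{ab}=M(U)$, and Lemma~\ref{descru} yields $\Pi(c)=G(c)'=U'$. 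The edge claim reduces to $\Pi(c)\cap(a,b)=\emptyset$: if $y=a+\epsilon\in\Pi(c)\cap(a,c_1]$ (the case $[c_2,b)$ is symmetric), the identity $\si^{km}(y)=a+3^{km}\epsilon\bmod 1$ (valid because $\si^k(a)=a$) translates the orbit condition into the $\si_{3^k}$-orbit of $\epsilon$ avoiding the length-$1/3$ arc $I=(c_1-a,c_2-a)$. Since $c_2<b$ implies $c_1-a<L-\tfrac13=\tfrac1{3(3^k-1)}$, the alternate branch $3^k\epsilon\ge c_2-a$ is infeasible (it would require $c_1-a\ge\tfrac1{3(3^k-1)}$), so $3^k\epsilon\le c_1-a$; iterating gives $\epsilon\le(c_1-a)/3^{km}$ for every $m$, forcing $\epsilon=0$, a contradiction.

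For part (2), with $c=\ol{aa'}$ and $a'=a+\tfrac13$, we have $(c_1,c_2)=(a,a')\subsetneq(a,b)$, so $\Pi(c)\supsetneq U'$. The $\supseteq$ inclusion is direct: $U'$-orbits avoid $(a,a')$; every vertex of $C(c)$ is carried by $\si^k$ along the caterpillar and eventually collapses, via the critical edge $c$, onto the orbit of $a$ inside $U'$; iterated pullbacks to holes of $U$ inherit the property. For the reverse, pick $y\in\Pi(c)\sm U'$ and let $n\ge 0$ be minimal with $\si^n(y)\in(a,b)$; then $z=\si^n(y)\in[a',b)$ and the earlier iterates of $y$ lie in $\uc\sm(a,b)$, i.e., in holes of $U$. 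Writing $z=a'+\delta$ with $\delta\in[0,L-\tfrac13)$, the computation $\si^k(z)=a+3^k\delta\bmod 1$ (using $3^{k-1}\in\mathbb{Z}$) together with $\si^k(z)\in\ol{L(c)}$ force $\delta=0$ or $\delta\ge 1/3^{k+1}$. In the latter case the recurrence $\delta_{m+1}=3^k\delta_m-\tfrac13$ on $[1/3^{k+1},L-\tfrac13)$ is strictly repelling from its fixed point $L-\tfrac13$, so after finitely many steps it lands on $\delta=0$. The starting values $\delta_0$ producing such an orbit form exactly the pullback chain of $a'$ which, by the construction in Lemma~\ref{catpil}, is the sequence of endpoints of the leaves $\ell_{-n}$ accumulating at $b$, i.e., $C'(c)\cap[a',b)$. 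Hence $z\in C'(c)$ and $y$ is an iterated pullback of $z$ to a hole of $U$, completing the decomposition $\Pi(c)=U'\cup C'(c)\cup\{\text{iterated pullbacks of }C'(c)\text{ to holes of }U\}$.

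The main obstacle in both parts is the orbit-avoidance computation: ruling out nontrivial $\si_{3^k}$-orbits that avoid an arc of length $1/3$ while starting too close to a fixed point of $\si^k$. The sharp numerical relation $L-\tfrac13=\tfrac1{3(3^k-1)}$, combined with the local $3^k$-expansion of $\si^k$ at $a$ (or $b$), is what cuts off the alternate escape branch of the $\si_{3^k}$-iteration and drives both the contradiction in (1) and the exact identification of the caterpillar pullback chain in (2).
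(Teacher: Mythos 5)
Your proof is correct in both parts, and it takes a genuinely more computational route than the paper's. For part~(1), the paper argues via the vassal gap $V(U)$ and the Cantor set $N(U)$: it observes that $\ol{H_{V(U)}(M''(U))}\subset(c_1,c_2)$, that all of $(a,b)\sm N(U)$ maps eventually into that arc, and then states without proof the key claim that a point of $N(U)$ whose orbit avoids $(c_1,c_2)$ must be refixed. Your proof dispenses with the vassal gap entirely and instead works directly in coordinates at the $\si^k$-fixed point $a$ (respectively $b$), using the identity $L-\tfrac13=\tfrac1{3(3^k-1)}$ to rule out the ``escape branch'' $3^k\epsilon\ge c_2-a$; this is precisely the computation the paper omits. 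Your conclusion via ``$\ol{ab}$ is an edge of $G(c)$, hence $M(G(c))=M(U)$ by Lemma~\ref{bndcrit}, hence $\Pi(c)=U'$ by Lemma~\ref{descru}'' is a slight detour (once $\Pi(c)\cap(a,b)=\emptyset$ and $\Pi(c)$ is forward invariant, Lemma~\ref{descru} alone already gives $\Pi(c)\subset U'$), but it is valid. For part~(2), the paper explicitly leaves the proof to the reader, so your analysis of the recurrence $\delta_{m+1}=3^k\delta_m-\tfrac13$ and the identification of the escape orbits with the caterpillar pullback chain $\{T^{-r}(0)\}_{r\ge 0}$ is genuinely new content relative to the text; this is the same idea as in (1), shifted to the other endpoint $a'$ of the critical leaf. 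Two small points you should tighten: you should note explicitly that the bound $3^k\epsilon\le 3^k(c_1-a)<L<1$ (and likewise $3^k\delta<L<1$) guarantees there is no wrap-around mod~$1$, so the affine formulas for $\si^{km}$ are literal and the induction is legitimate; and ``after finitely many steps it lands on $\delta=0$'' should be phrased as ``the orbit escapes $[3^{-(k+1)},L-\tfrac13)$ in finite time because it is repelled from the fixed point $L-\tfrac13$ at rate $3^k$, and the only admissible escape value compatible with $y\in\Pi(c)$ is $\delta=0$, since $(0,3^{-(k+1)})$ is forbidden.''
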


\begin{proof}
(1) Clearly $U'\subset \Pi(c)$. Let $\ell=\ol{b'a'}$; then $\ell$ is
an edge of the vassal gap $V(U)$ and $\ell\cap c=\0$. Hence the
closure of the arc $I=H_{V(U)}(\ell)$ consists of points not in
$\Pi(c)$. Clearly, other points of $H(M(U))\sm N(U)$ map eventually
to $\ol{I}$. Thus, they do not belong to $\Pi(c)$ either. If the
orbit of a point from $N(U)$ is not separated by $c$, then this
point must be refixed. However, the only refixed points of $N(U)$
are the endpoints of $M(U)$. Thus, $\Pi(c)=U'$.

(2) This case is left to the reader.
\end{proof}

The opposite claim is also true.

\begin{lem}\label{l:perpic}
If $c=\ol{xy}$ is a critical leaf with a periodic endpoint, say, $y$
such that $x, y\in \Pi(c)$ then there exists a quadratic invariant
gap $U$ with a periodic major $M(U)=\ol{yz}$ of period $k$ such that
the properties listed in Lemma~\ref{picper} hold and $z$ is the
closest to $y$ in $L(c)=(y, x)$ point which is $\si^k$-fixed
(moreover, $z$ is of minimal period $k$).
\end{lem}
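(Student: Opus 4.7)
My plan is to construct the gap $U$ by first identifying its major $M(U) = \ol{yz}$ explicitly via a length constraint, and then recovering $U$ from its major. To begin, I would observe that since $c = \ol{xy}$ is critical and $L(c) = (y, x)$ has length $\frac{2}{3}$, one has $x \equiv y + \frac{2}{3} \pmod 1$. Let $k$ be the minimal period of $y$, and set $\ell_k := \frac{3^{k-1}}{3^k-1}$, the hole length forced by Lemma~\ref{bndcrit}(2) for a periodic major of period $k$. Define $z := y - \ell_k \pmod 1$. Since $\frac{1}{3} < \ell_k < \frac{2}{3}$, the point $z$ lies strictly inside $L(c)$. A direct computation using $3^k y \equiv y \pmod 1$ and the identity $3^k \ell_k = 3^{k-1} + \ell_k$ gives $\sigma^k(z) \equiv z \pmod 1$.

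To show $z$ has minimal period exactly $k$, I would argue by contradiction. Suppose $z$ had minimal period $j \mid k$ with $j < k$. Then $(3^j - 1)z \in \bbz$, so $(3^j - 1)y \equiv (3^j - 1)\ell_k = 3^{k-1}/S_j \pmod 1$, where $S_j := (3^k-1)/(3^j-1) = 1 + 3^j + \dots + 3^{j(k/j - 1)}$. Since $S_j \equiv 1 \pmod 3$ and hence $\gcd(S_j, 3^{k-1}) = 1$, this forces $y$ to take one of finitely many specific values of the form $y \equiv \ell_k + N/(3^j-1) \pmod 1$ for some $N \in \bbz$. A case analysis then shows that the $\sigma$-orbit of each such $y$ necessarily visits the short arc $\uc \setminus \ol{L(c)}$, contradicting the hypothesis $y \in \Pi(c)$.

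Next I would build $U$ directly from its candidate major. By analogy with Lemma~\ref{descru}, let $U' := \{t \in \uc : \sigma^n(t) \notin (z, y) \text{ for all } n \ge 0\}$. Arguments parallel to those in the proofs of Lemma~\ref{pic} and Lemma~\ref{vassal} show that $U'$ is a Cantor set and $U := \ch(U')$ is a stand alone invariant quadratic gap, in case (2) of Lemma~\ref{bndcrit}, with periodic major $\ol{yz}$ of period $k$. Writing $M(U) = \ol{ab}$ with $a = z$ and $b = y$, we have $x \equiv b - \frac{1}{3} = b'$, so the critical leaf $c = \ol{xy}$ coincides with the canonical critical leaf $\ol{bb'}$ of Lemma~\ref{catpil}; this places us in case (2) of Lemma~\ref{picper}.

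Finally I would verify the decomposition $\Pi(c) = U' \cup C'(c) \cup (\text{iterated } \sigma\text{-preimages of } C'(c) \text{ in holes of } U)$. For $\supset$: the forward orbit of every point of $U'$ avoids $(z,y)$ and so remains in $\ol{L(c)}$; the basis $C'(c)$ of the canonical caterpillar lies in $\ol{H(M(U))} \cap \ol{L(c)}$, and its pullbacks into holes of $U$ also remain in $\ol{L(c)}$. For $\subset$: given $t \in \Pi(c)$, either the forward orbit of $t$ avoids $(z,y)$ and $t \in U'$, or some first iterate enters $\ol{H(M(U))}$; since all forward images must remain in $\ol{L(c)}$, this first entry lands on a leaf of the canonical caterpillar, and tracing $t$ backward through $\sigma$ locates it in $C'(c)$ or one of its pullbacks. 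The main obstacle is this last verification, which closely parallels the proof of Lemma~\ref{picper}.
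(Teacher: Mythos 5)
Your construction pins down the same point $z = y - 3^{k-1}/(3^k-1)$ that the paper arrives at, so the two proofs agree on the object being built, but the routes are genuinely different. The paper constructs $z$ \emph{inside} the already-established gap $G(c)$: it identifies a chain of edges $c, c_{-1}, c_{-2}, \dots$ of $G(c)$, with hole lengths $\tfrac13, 3^{-k-1}, 3^{-2k-1}, \dots$ summing to $\tfrac{3^{k-1}}{3^k-1}$, converging to a point $z$; since $z$ is then automatically a point of $\Pi(c) = G(c)'$, the gap $U$ is obtained by simply replacing the caterpillar chain $\ol{A}$ (and all its pullbacks on $\bd(G(c))$) with the single edge $M = \ol{yz}$ (and its pullbacks). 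The quadratic-gap structure of $U$ is inherited from $G(c)$ for free because Lemma~\ref{pic} has already done the work. You instead define $z$ by a closed formula forced by Lemma~\ref{bndcrit}(2) and then try to build $U$ from scratch from the orbit condition $U' = \{t : \si^n(t)\notin (z,y)\ \forall n\}$. What your route loses is exactly what the paper's gains: you now have to re-derive that $U'$ is a Cantor set, that $\ch(U')$ is a quadratic invariant gap, and — a point you do not address at all — that $y$ and $z$ actually belong to $U'$ (i.e., that their orbits avoid $(z,y)$), without which $\ol{yz}$ is not even an edge of $U$. This is automatic in the paper's framework because $z$ arises as a limit of vertices of $G(c)$, hence $z\in\Pi(c)$, and the modification of $G(c)$ manifestly keeps $y,z$ in the boundary. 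Your handwave ``arguments parallel to Lemmas~\ref{pic} and \ref{vassal}'' would need to absorb this. On the minimal-period claim, your number-theoretic reduction to $y \equiv \ell_k + N/(3^j-1)$ is correct and the subsequent ``case analysis'' is plausible (and checks out in small cases), but it is labour-intensive and left incomplete; the paper is no better here (it just asserts the claim ``easily follows''), and a slicker argument is available in either framework: if $z$ had period $j\mid k$ with $j<k$, then $M(U)$ and $\si^j(M(U))$ would be two distinct edges of $U$ meeting at $z$ with holes on both sides, so $z$ would be isolated in $U'$, contradicting Lemma~\ref{cantor}. Finally, a small cosmetic remark: your $z$ is not literally the nearest $\si^k$-fixed point to $y$ in the naive metric sense (it is the nearest to $x$, at circular distance $\tfrac1{3(3^k-1)}$); this matches the wording used in the proof of Lemma~\ref{l:majordesc} and suggests the phrase ``closest to $y$'' in the statement is an imprecision that neither you nor the paper's proof attempts to verify.
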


\begin{proof}
Assume that the period of $y$ is $k$. Consider the gap $G(c)$. By
the properties of gaps it follows that there is an edge $c_{-1}$ of
$G(c)$ attached to $c$ which maps to $c$ under $\si^k$. Moreover,
$|c_{-1}|_{G(c)}=3^{-k-1}$. This can be continued infinitely many
times so that the $m$-th edge of $G(c)$ which maps to $c$ under
$\si^{mk}$ is a leaf $c_{-m}$ such that its hole is of length
$3^{-mk-1}$. Clearly, the concatenation $A$ of leaves $c$, $c_{-1}$,
$\dots$ converges to a point $z\in \uc$ which is $\si^k$-fixed.
Set $\ol{zy}=M$.

Since by Lemma~\ref{pic} $G(c)$ is a quadratic gap, then $\ol{A}$
has a lot of preimages on the boundary of $G(c)$.
Let us replace all of them by the corresponding preimages of $M$
(in particular, let us replace $\ol{A}$ itself by $M$).
It follows that the newly
constructed gap $U$ is  a quadratic gap with  major $M=M(U)$.
Clearly, this is precisely the situation described in
Lemma~\ref{picper} as desired. The last claim of the lemma easily
follows.
\end{proof}

We can now summarize the above proven results.
Let $c$ be a critical leaf.
Then the orbit of $c$ can be of three distinct types.
Firstly, the orbit of $c$ can be contained in $L(c)$.
Then the gap $G(c)=U(c)$ and the leaf $c$ are called \emph{regular critical}.
Secondly, an endpoint of $c$ can be periodic with the orbit of $c$
contained in $\ol{L(c)}$.
As described in Lemmas~\ref{picper} and ~\ref{l:perpic}, then $\Pi(c)$ consists of
$U'$, $C'(c)$, and all (iterated) pullbacks of $C'(c)$ to holes of
$U$ for some invariant quadratic gap $U=U(c)$ with a periodic major
$M(U)$ (the gap $U$ can be defined as the convex hull of all
non-isolated points in $\Pi(c)$). Then we call the gap $G(c)$ an
\emph{extended \ca\, gap}, and the critical leaf $c$ a \emph{\ca\,}
critical leaf.
Thirdly, there can be $n>0$ with $\si^n(c)\nin \ol{L(c)}$.
Denote by $n_c$ the smallest such $n$.
Then we call $G(c)=U(c)$ a \emph{gap of periodic type} and we call $c$ a
critical leaf of \emph{periodic type}.

Given a periodic leaf $\ell$,
when can we guarantee by just looking at is dynamics that $\ell$ is
in fact the major of an invariant quadratic gap of periodic type?
We show below in Lemma~\ref{l:majordesc} that properties listed in
Lemma~\ref{bndcrit} are basically sufficient for $\ell$ to be
the major of this kind.
First we need a helpful observation.

\begin{lem}\label{l:posholes}
Suppose that $\ell=\ol{xy}$ is a $\si_d$-periodic leaf of period $r$
such that its endpoints are $\si_d^r$-fixed. Moreover, suppose that
there is a unique component $Z$ of $\ol{\disk}\sm
\orb_{\si_d}(\ell)$ such that all images of $\ell$ are pairwise
disjoint boundary leaves of $Z$ and $(x, y)$ is the hole of $Z$ behind
$\ell$. Then the following holds.

\begin{enumerate}

\item If $d=2$ then $Z$ is a semi-laminational set;

\item If $d=3$ and there is a hole of $Z$ with length
greater than $\frac13$, then
$Z$ is a semi-laminational set.
\end{enumerate}

\end{lem}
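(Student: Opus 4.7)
Writing $\ell_k = \ol{x_k y_k}$ with $x_k = \si_d^k(x)$ and $y_k = \si_d^k(y)$, set $\xi_k = +$ if the hole of $Z$ at $\ell_k$ is the positively oriented arc $(x_k, y_k)$, and $\xi_k = -$ otherwise. The hypothesis gives $\xi_0 = +$; since $\si_d$ sends $\{x_k, y_k\}$ to $\{x_{k+1}, y_{k+1}\}$, $Z$ being semi-laminational is equivalent to $\xi_k = +$ for every $k$, and the plan is to establish this by induction on $k$. The central combinatorial observation, valid for any $d$, is that $\xi_k = +$ precisely when the lens $W_k$---the component of $\ol{\disk} \sm \orb(\ell)$ bounded solely by $\ell_k$---lies on the $(x_k, y_k)$-side of $\ell_k$; since $W_k$ has no other leaf on its boundary and pairwise disjointness forbids any other leaf from crossing $\ell_k$, every endpoint of every $\ell_j$ with $j \neq k$ must lie in the opposite arc $(y_k, x_k)$.

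The clean case of the inductive step is when $|(y_k, x_k)| < 1/d$: then $\si_d$ restricts to an orientation-preserving homeomorphism of this arc onto $(y_{k+1}, x_{k+1})$, so every endpoint of every $\ell_m$ with $m \neq k+1$ lies in $(y_{k+1}, x_{k+1})$, and the combinatorial equivalence applied at $\ell_{k+1}$ forces $\xi_{k+1} = +$. In the residual case $|(y_k, x_k)| \geq 1/d$, I would use a local orientation-tracing argument: the uniqueness of $Z$ forces the orbit size to be at least two, and pairwise disjointness then forbids any $\ell_k$ from being a diameter (orbits of diameters all pass through $0$ and so intersect), so $\si_d$ is a local orientation-preserving homeomorphism near every $\ell_k$; the image of a small $W_k$-neighborhood of $\ell_k$ therefore lies on the $(x_{k+1}, y_{k+1})$-side of $\ell_{k+1}$.

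The main technical obstacle is that this local argument does not by itself determine whether the image neighborhood lies in the $W_{k+1}$-component or in the $Z$-component on that side. For (1), $d = 2$, the restrictive combinatorics of $\si_2$-orbits with $\si_2^r$-fixed endpoints rules out configurations in which $\xi$ can flip, forcing $\xi_k = +$ throughout. For (2), $d = 3$, the hypothesis that some hole of $Z$ has length greater than $1/3$ is genuinely necessary: without it $\xi$ can flip, as illustrated by the $\si_3$-orbit of the chord with endpoints $7/8$ and $1/8$ (mapped to the chord with endpoints $5/8$ and $3/8$), giving a $Z$ whose two holes each have length $1/4 < 1/3$ and whose $\xi$ alternates between $+$ and $-$. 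With the hypothesis in force, the long hole functions as a rigidifying anchor that, together with the local orientation-tracing in the residual case, propagates $\xi_k = +$ around the orbit.
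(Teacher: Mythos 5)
Your framework — tracking the sign $\xi_k$ of each hole and trying to propagate $\xi_k=+$ around the orbit — is the right way to organize the problem, and the observation about the ``clean case'' $|(y_k,x_k)|<1/d$ is correct, though notice that this case is often vacuous (it requires the hole at $\ell_k$ to be longer than $(d-1)/d$, which is not implied by any hypothesis of the lemma). The genuine content of the lemma lives entirely in what you call the residual case, and there your argument stops exactly where the work begins: you state, correctly, that the local orientation trace does not settle which side of $\ell_{k+1}$ is the $Z$-side, and then you assert without proof that ``restrictive combinatorics'' (for $d=2$) or a ``rigidifying anchor'' (for $d=3$) resolve it. That is the gap, and it is not a small one.

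The missing idea for part (1) is the following: if $\si_d$ reverses orientation on a hole $I=(\si_d^k(x),\si_d^k(y))$, i.e. if the next hole is $(\si_d^{k+1}(y),\si_d^{k+1}(x))$ rather than $(\si_d^{k+1}(x),\si_d^{k+1}(y))$, then the image $\si_d(I)$ contains $\ol I$ while the endpoints of $\si_d(I)$ lie outside $I$, which forces a $\si_d$-fixed point inside $I$. Since the holes of $Z$ are pairwise disjoint and $\si_2$ has a single fixed point on $\uc$, at most one reversal can occur over the whole orbit; but the orbit closes up with the same orientation after $r$ steps, so the number of reversals must be even, hence zero. Your sign-bookkeeping is compatible with this, but nothing in your write-up produces the fixed point, and without it the parity argument has nothing to bite on. For part (2), the paper does not argue directly: it inserts a critical leaf $c$ with non-periodic endpoints inside the long hole, passes to the invariant quadratic gap $G(c)$, and pushes the orbit of $\ell$ through the semiconjugacy $\psi_{G(c)}$ to a $\si_2$-periodic orbit of leaves that satisfies the hypotheses of part (1). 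Your long-hole example showing the hypothesis in (2) is sharp is a nice sanity check, but the ``anchor'' metaphor is not a substitute for this reduction, and I do not see a way to make the local-orientation argument close without it.
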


\begin{proof}
If $I=(\si_d^k(x), \si_d^k(y))$ is a hole of $Z$ while
$(\si_d^{k+1}(y), \si_d^{k+1}(x))$ is a hole of $Z$, then we say
that $\si_d$ \emph{changes orientation} on $I$. Now let us show that
if $I=(\si_d^k(x), \si_d^k(y))$ is a hole of $Z$ such that $\si_d$
changes orientation on $I$, then $I$ contains a $\si_d$-fixed point.
Indeed, the image of $I$ is an arc which connects $\si_d^{k+1}(x)$
to $\si^{k+1}(y)$ and potentially wraps around the circle a few
(zero or more) number of times. If $(\si_d^{k+1}(y),
\si_d^{k+1}(x))$ is a hole of $Z$, it follows that $I\subset
\si_d(I)$ and that the endpoints of $\si_d(I)$ do not belong to $I$.
Hence $I$ contains a $\si_d$-fixed point. Now let us consider two
separate cases.

(1) Suppose that $d=2$.
Then by the above $\si_2$ can change
orientation on no more than one hole. However since the leaf is
periodic and its endpoints are refixed, $\si_2$ has to change
orientation an even number of times, a contradiction.

(2) Suppose that $d=3$ and $(x, y)=H$ is a hole of $Z$ of length
greater than $\frac13$.
Choose a critical leaf $c$ whose endpoints
are in $(x, y)$ and are non-periodic. Consider the gap $G(c)$. Then
either $\ell$ is the major of $G(c)$, in which case $G(c)$ is of
periodic type and the claim follows from the properties of such gaps.
Otherwise, it is not hard to verify that the $\psi_{G(c)}$-images of
$\ell$ and its images are leaves forming a $\si_2$-periodic orbit
of leaves with the properties from (1).
\end{proof}

\begin{lem}\label{l:majordesc}
Suppose that $\ell$ is a periodic leaf, and there is a unique
component $Z$ of $\ol{\disk}\sm \orb(\ell)$ such that all images of
$\ell$ are pairwise disjoint boundary leaves of $Z$. Moreover, suppose that the hole
$H_Z(\ell)$ of $Z$ behind $\ell$ is such that
$\frac13<|H_U(\ell)|<\frac23$ while the other holes of $Z$ behind
images of $\ell$ are shorter than $\frac13$. Then $\ell$ is the
major of an invariant quadratic gap $U=\ch(\Pi(\ell))$.
\end{lem}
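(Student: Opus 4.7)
Writing $\ell = \ol{ab}$ with hole $H_Z(\ell) = (a,b)$ of length $h_0 \in (\tfrac13,\tfrac23)$, and letting $k$ denote the $\si$-period of $\ell$, my goal is to realize the invariant quadratic gap as $U := \ch(\Pi(\ell))$ with major $M(U) = \ell$. First I show the endpoints $a,b$ are $\si^k$-fixed (rather than swapped). Since each short hole $H_Z(\si^i(\ell))$ for $1\le i\le k-1$ has length less than $\tfrac13$, the map $\si$ sends it homeomorphically and orientation-preservingly onto the next short hole (for $i\le k-2$) or onto the arc with endpoints $\si^k(a), \si^k(b)$ of length $3h_{k-1}$ (for $i=k-1$), which must coincide with $H_Z(\ell) = (a,b)$. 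Matching lengths and orientations, a swap would force the positively-oriented arc $(b,a)$ of length $1-h_0$ to equal the positively-oriented arc $(a,b)$ of length $h_0$, which is impossible unless $h_0 = \tfrac12$ and $\ell$ is the diameter; the latter case is excluded since $L(\ell)$ would be undefined.

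With $\si^k$-fixed endpoints in hand, Lemma~\ref{l:posholes}(2) (applied with $d=3$, the long hole being longer than $\tfrac13$) shows $Z$ is a semi-laminational set. Setting $h_i := |H_Z(\si^i(\ell))|$ and iterating hole lengths then gives $h_1 = 3h_0 - 1$, $h_{i+1} = 3h_i$ for $1 \le i \le k-2$, and $3h_{k-1} = h_0$; solving yields $h_0 = 3^{k-1}/(3^k-1) \le \tfrac12$. Therefore $L(\ell) = \uc \sm \ol{(a,b)}$ and $\Pi(\ell)$ equals the set of $x \in \uc$ whose entire forward $\si$-orbit is disjoint from $(a,b)$. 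This set is manifestly closed, $\si$-forward-invariant, and non-empty (containing, for instance, the $\si^k$-fixed endpoint $a$).

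I then verify that $U = \ch(\Pi(\ell))$ is a stand alone invariant quadratic gap with major $\ell$, following the template of Lemma~\ref{pic}. The core is preimage counting. For $y \in \Pi(\ell)$ its three $\si$-preimages are spaced at distance $\tfrac13$; since $|(a,b)| \in (\tfrac13,\tfrac23)$, either one or two of them lie in $(a,b)$. A direct computation (using that the three preimages differ by shifts of $\tfrac13$ and all candidate intervals $[3a - i, 3b - i - 1]$ for $i = 0, 1, 2$ coincide mod $1$) identifies the set of $y$ with two preimages in $(a,b)$ as the positively-oriented arc $(\si(a), \si(b))$ of length $3h_0 - 1 = h_1$, that is, the short hole $H_Z(\si(\ell))$. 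But every $y$ in this hole satisfies $\si^{k-1}(y) \in H_Z(\ell) = (a,b)$, so it is excluded from $\Pi(\ell)$. Hence every $y \in \Pi(\ell)$ has exactly one preimage in $(a,b)$ and exactly two preimages in $\Pi(\ell)$; together with the routine verification that every hole of $\Pi(\ell)$ other than $(a,b)$ is mapped homeomorphically by $\si$ to another hole of $\Pi(\ell)$, this establishes that $U$ is an invariant quadratic gap. The length bounds force $(a,b)$ to be its unique long hole, so $\ell = M(U)$ by Lemma~\ref{bndcrit}.

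The main obstacle will be navigating the exceptional preimage-count cases at $y = \si(a)$ and $y = \si(b)$, analogous to the $x = \si(c)$ case of Lemma~\ref{pic}, and confirming that the boundary of $\ch(\Pi(\ell))$ is made up of genuine leaves rather than nontrivial circle arcs---routine but fiddly tasks that parallel the treatment in Lemma~\ref{pic}.
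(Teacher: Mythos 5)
Your route is genuinely different from the paper's. The paper exploits the machinery already built for critical chords: it puts the critical leaf $c=\ol{x(x+\frac13)}$ through an endpoint $x$ of $\ell$, invokes Lemma~\ref{l:perpic} (and Lemma~\ref{vassal}) to obtain a quadratic invariant gap $U=G(c)$ of periodic type with periodic major $M(U)$, and then shows $M(U)=\ell$ by tracing the arc $I=[\si(x),\si(y)]$ forward and identifying $y$ as the nearest $\si^k$-fixed point past $x+\frac13$. You instead verify directly that $\ch(\Pi(\ell))$ is the desired quadratic invariant gap by preimage counting modeled on Lemma~\ref{pic} — a cleaner conceptual picture that makes $\Pi(\ell)$'s role explicit, at the cost of the boundary-case bookkeeping you acknowledge and defer.

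But your opening step, ruling out the swap $\si^k(a)=b$, $\si^k(b)=a$, is circular and leaves a real gap. You assert that $\si$ carries each short hole of $Z$ onto "the next short hole" and that the image of the last short hole "must coincide with $H_Z(\ell)=(a,b)$." The assertion that the $\si$-image of a hole of $Z$ is again a hole of $Z$ (rather than the complementary arc with the same endpoints) is precisely the semi-laminational property supplied by Lemma~\ref{l:posholes}, and part (2) of that lemma requires as a \emph{hypothesis} that the endpoints of $\ell$ be $\si_d^r$-fixed — the very no-swap fact you are trying to establish. Nor do lengths alone force a contradiction: if $\si^k$ swapped $a,b$, the image of $H_Z(\si^{k-1}(\ell))$ could be $(b,a)$ of length $1-h_0\in(\frac13,\frac23)$, giving $h_{k-1}=(1-h_0)/3<\frac13$, which is consistent with all the stated length constraints. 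You therefore need an independent argument for the no-swap fact, or a proof that sidesteps Lemma~\ref{l:posholes}. (The paper's proof also tacitly relies on $\si^k$-fixed endpoints when it invokes Lemma~\ref{l:posholes}, but it does not attempt to re-derive it, whereas you do — and the attempt does not go through.) Once that is repaired, the preimage-counting argument you sketch appears workable.
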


\begin{proof}
Let us draw a critical leaf $c$ through an endpoint $x$ of
$\ell=\ol{xy}$ so that the $x$ and $x+\frac13$ are the endpoints of
$c$. By the Lemma~\ref{l:perpic} this gives rise to an extended
caterpillar gap as well as a gap of periodic type $U$ with periodic
major $M(U)$. We need to show that $M(U)=\ell$. To this end consider
the leaf $\ell''=\ol{(y-\frac13)(x+\frac13)}$. Observe by
Lemma~\ref{l:posholes} the map does not change orientation on any
hole of $Z$. Then the arcs $[x, y-\frac13]$ and $[x+\frac13, y]$
have the same image-arc $I=[\si(x), \si(y)]$; moreover, by the
assumptions of the lemma after that the arc $I$ maps forward in the
one-to-one fashion until it maps onto $[x, y]$. This shows that $y$
is the closest to $x$ (in the positive direction) $\si^k$-fixed point
beyond $x+\frac13$ and implies, by Lemma~\ref{vassal}, that $\ell$
is the (periodic) major of $U$ defined as in Lemma~\ref{l:perpic}.
\end{proof}

By the above proven lemmas, each gap $U=G(c)$ of periodic type has a periodic major
$M(U)=\ol{xy}$ of period $n_c$ with endpoints in $L(c)$; moreover,
$x$ and $y$ are the closest in $L(c)$ points to the endpoints of
$c$, which are $\si^{n_c}$-fixed (in fact, they are periodic of
minimal period $n_c$).

\begin{lem}\label{cantor}
If  an invariant  quadratic gap $U$ is either regular critical or of
periodic type, then $U'$ is a Cantor set. If $U=G(c)$ is an extended
caterpillar gap, then $U'$ is the union of a Cantor set and a
countable set of isolated points all of which are preimages of the
endpoints of $c$.
\end{lem}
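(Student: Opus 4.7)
The proof naturally splits into the three cases listed in the statement. In each case I will first verify the preliminary facts that $U'$ is closed and has empty interior in $\uc$. Closedness is immediate from Lemma \ref{descru} (in the periodic-type case) or from the definition of $\Pi(c)$ (in the two critical cases), since in each case $U'$ is an intersection of closed sets determined by forward orbits. Empty interior follows from expansion: any non-empty open arc $I \subset \uc$ has $|\si^n(I)| \to 1$ under iteration, so eventually $\si^n(I)$ covers $\uc$ and in particular meets $H(M(U))$ (respectively $\uc \setminus \ol{L(c)}$); hence some point of $I$ has orbit escaping and is absent from $U'$.

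For the periodic-type case I will leverage the vassal gap $V(U)$ from Lemma \ref{vassal}, whose basis $N(U)$ is already Cantor. By Lemma \ref{descru} every edge of $U$ is an iterated $\si$-preimage of $M(U) = \ol{ab}$, so every point of $U' \setminus N(U)$ is a strict $\si$-preimage of $a$ or $b$. Since $a, b \in N(U)$ and $N(U)$ has no isolated points, taking an appropriate non-critical local $\si^{-1}$-branch near such a preimage $p$ pulls back a sequence in $N(U)$ accumulating at $a$ or $b$ to a sequence in $U'$ accumulating at $p$. Hence $U'$ is perfect, and therefore a Cantor set.

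In the regular-critical case $U = G(c)$ and $U' = \Pi(c)$. By Lemma \ref{pic} the map $\si$ is $2$-to-$1$ on $\Pi(c)$ except for a single $3$-to-$1$ fiber over $\si(c)$. I will split $\ol{L(c)}$ at its interior midpoint $\gamma$ (the unique point of $L(c)$ other than the endpoints of $c$ with $\si(\gamma) = \si(c)$) into two closed subarcs of length $\tfrac{1}{3}$, each mapping almost-bijectively onto $\uc$ under $\si$. The associated itinerary map $\Pi(c) \to \{0,1\}^{\mathbb{N}}$ is a continuous surjection with at most countable fibers; since every tail-perturbation of an itinerary lifts to a nearby point of $\Pi(c)$, no point of $\Pi(c)$ is isolated and $\Pi(c)$ is Cantor.

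In the extended caterpillar case, Lemma \ref{picper} decomposes $U' = \Pi(c) = U_0' \cup C'(c) \cup (\text{pullbacks of } C'(c))$, where $U_0$ is the associated invariant quadratic gap of periodic type. The basis $U_0'$ is Cantor by the previous step. The remaining points form a countable family of iterated $\si$-preimages of the non-periodic endpoint of $c$. To finish I will show each such point $p$ is isolated in $U'$: by construction $p$ lies at the concatenation vertex of two caterpillar (or pullback) leaves bounding a small arc $A \subset \uc$ around $p$, and any point of $A \setminus \{p\}$ maps after finitely many iterations of $\si^k$ (where $k$ is the period of the head of the caterpillar) into $H(M(U_0))$, since the $\si^k$-fixed head endpoint $x$ is repelling outward. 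The main obstacle is verifying this isolation uniformly across all pullback copies; the key point is that each caterpillar structure accumulates at only one point (the head endpoint $x$, which already belongs to $U_0'$), so every preimage of the non-periodic endpoint of $c$ has circle arcs on both sides meeting $U'$ only at that point itself.
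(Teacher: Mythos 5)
Your regular-critical argument (the horseshoe/itinerary approach) and your caterpillar discussion (via Lemma~\ref{picper}) are sound in outline, but the periodic-type case contains a genuine error that breaks the proof. You write that ``every point of $U'\sm N(U)$ is a strict $\si$-preimage of $a$ or $b$,'' intending to leverage the fact that $N(U)$ is already known to be Cantor. But $N(U)$ is the basis of the \emph{vassal} $V(U)$: by definition it is a subset of $[a,b'']\cup[a'',b]\subset[a,b]$. On the other hand, Lemma~\ref{descru} says $U'$ is exactly the set of points whose orbits avoid $H(M(U))=(a,b)$, so $U'\cap[a,b]=\{a,b\}$. Thus $N(U)\cap U'=\{a,b\}$ and the two sets are essentially disjoint; they are bases of two \emph{different} stand-alone gaps. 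Your claim would force $U'$ to be countable (a Cantor set plus a countable set), which is precisely the opposite of what must be shown. Moreover the pullback step fails for the same reason: points of $N(U)\sm\{a,b\}$ near $a$ lie in the hole $(a,b)$, so their $\si$-pullbacks near a preimage $p$ of $a$ land in holes of $U$, not in $U'$, and give no evidence that $p$ is non-isolated.

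The paper's proof for the regular-critical and periodic cases proceeds quite differently: it observes that iterated preimages of the endpoints of $M(U)$ are dense in $U'$ by expansion, so any isolated point of $U'$ would have to lie in the (finite-orbit-deep) preimage of an endpoint of $M(U)$; it then shows the endpoints of $M(U)$ are themselves non-isolated by pulling back points of $U'$ under iterates of the remap of $U'$ so that the pullbacks converge to those endpoints (periodicity of $M(U)$ keeps the pullbacks inside $U'$). To repair your argument you would need to replace the vassal-based decomposition with a density-of-preimages argument of this type, or with the same horseshoe/itinerary scheme you use in the regular-critical case applied to the two-to-one remap of $\bd(U)$. As written, the caterpillar case also inherits the gap, since it invokes the periodic-type case for $U_0'$.

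One further minor point on the regular-critical case: the phrase ``with at most countable fibers'' understates what you need; it is cleaner to note the itinerary fibers have at most two points (exactly the Markov-boundary ambiguity at preimages of $\gamma$), and that changing a tail symbol at a sufficiently deep position produces a \emph{distinct} point of $\Pi(c)$ in the same cylinder, whose diameter tends to $0$. This makes the ``no isolated points'' conclusion explicit rather than heuristic.
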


\begin{proof}
In the regular critical and periodic cases, it suffices to prove
that the set $U'$ has no isolated points. Indeed, iterated preimages
of the endpoints of $M(U)$ are dense in $U'$ by the expansion properties of $\si$.
Therefore, an isolated point in $U'$ must
eventually map to an endpoint of $M(U)$. Thus it remains to show
that the endpoints of $M(U)$ are not isolated. This follows because
the endpoints of $M(U)$ are periodic, and suitably
chosen pullbacks of points in $U'$ to $U'$ under the iterates of the
remap of $U'$ will converge to the endpoints of $M(U)$.

The case of an extended caterpillar gap follows from Lemma~\ref{picper}.
\end{proof}

\subsection{Canonical laminations of invariant quadratic gaps}
\label{s:canlamqua}

We now associate a specific lamination with every invariant
quadratic gap. This is done in the spirit of \cite{thu85} where
pullback laminations are defined for a given, maximal collection of
critical leaves. Since classes of a lamination $\sim$ are finite, an
invariant lamination cannot contain
 an (extended) \ca{} gap. Hence below we consider
only regular critical gaps and gaps of periodic type.

Suppose that $U$ is a stand alone quadratic invariant gap of regular
critical type, i.e. the major $C=M(U)$ is a critical leaf. Edges of
$U$ have uniquely defined iterated pullbacks that are disjoint from
$U$. These pullbacks define an invariant lamination. More precisely,
we can define a lamination $\sim_U$ as follows: two points $a$ and
$b$ on the unit circle are equivalent if either they are endpoints
of an edge of $U$, or there exists $N>0$ such that $\si^N(a)$ and
$\si^N(b)$ are endpoints of the same edge of $U$, and the set
$\{\si^i(a),\si^i(b)\}$ is not separated by $U$ for $i=0$, $\dots$,
$N-1$.
Note that all classes of $\sim$ are either points or leaves.
Clearly $\sim_U$ is an invariant lamination.

Let $U$ be of periodic type and $V$ be its vassal. Define a lamination
$\sim_U$ as follows: two points $a, b\in \uc$ are equivalent if
either $a$ and $b$ are endpoints of an edge of $U$ or $V$, or  there
exists $N>0$ such that $\si^N(a)$ and $\si^N(b)$ are endpoints of
the same edge of $U$ or the same edge of $V$, and the chord
$\ol{\si^i(a)\si^i(b)}$ is disjoint from $U\cup V$ for $i=0$,
$\dots$, $N-1$. Note that $V$ is a gap of $\sim_U$.

\begin{lem}\label{l:canlam1}
If $U$ is a stand alone invariant quadratic gap of regular critical type, then
$\sim_U$ is the unique invariant lamination such that $U$ is
one of its gaps.
\end{lem}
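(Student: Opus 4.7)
The plan is first to verify that $\sim_U$ is a well-defined invariant lamination with $U$ as one of its gaps, and then to establish uniqueness by showing that every invariant lamination $\sim$ having $U$ as a gap must coincide with $\sim_U$.

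For the first part, the $\sim_U$-classes are either singletons (points of $U'$ that are not endpoints of any edge of $U$ or of an iterated $\si$-preimage of such an endpoint) or 2-point sets consisting of endpoints of either an edge of $U$ or a chord whose $N$-th $\si$-image is such an edge, with the intermediate iterates unseparated by $U$. Since $C=M(U)$ is critical with non-periodic endpoints (this being the regular critical case, so Lemma~\ref{bndcrit}(1) applies and $|H(C)|=\frac13$), these preimage chains cannot collide and the resulting relation is a genuine equivalence. Axioms (E3) and (D4) are immediate, as every class has at most two points; (E2) holds because distinct pullback chords lie in pairwise disjoint convex hulls $\ch(\ol H)$ of holes of $U$ and are therefore unlinked; (E1) holds since the diameters of iterated pullback chords shrink to zero, so $\lam^+_{\sim_U}$ is closed in $\ol{\disk}$. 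Properties (D1)--(D3) follow directly from the pullback construction together with $\si(U)=U$.

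For uniqueness, let $\sim$ be any invariant lamination with $U$ as one of its gaps. Every edge of $U$ is a leaf of $\sim$. For each edge $\ol{ab}$ of $U$, the $\sim$-class of $a$ equals exactly $\{a,b\}$: by (E2) any additional point $z$ in the class would have to lie in $U'$, but no point of $U'$ is an endpoint of more than one edge of $U$ (since $U'$ is a Cantor set by Lemma~\ref{cantor}), so the chord $\ol{az}$ would cross the interior of $U$, contradicting that $U$ is a gap. Applying (D2) and (D3) to the class $\{x,y\}$ of the critical edge $C$, its preimage under $\si$ splits into three 2-point classes of $\sim$, each lying in a hole of $U$ and thus a pullback pair of $\sim_U$. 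Iterating this pullback argument shows that every leaf of $\sim_U$ is a leaf of $\sim$.

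The main obstacle is to rule out extra leaves of $\sim$. Suppose $\ell=\ol{pq}$ is a leaf of $\sim$ not lying in $\sim_U$. Then $\ell\subset\ch(\ol H)$ for some hole $H$ of $U$ with at least one endpoint interior to $H$. As $\ell$ is an edge of some gap of $\lam_\sim$, Lemma~\ref{l:maj}(3) implies that $\ell$ is (pre)critical or (pre)periodic. In the (pre)critical case some iterate $\si^n(\ell)$ is a critical chord, with endpoints differing by $\frac13$; since every hole $H'\ne H(C)$ of $U$ has length $<\frac13$, no critical chord can fit in $\ch(\ol{H'})$, and the only critical chord inside $\ch(\ol{H(C)})$ is $C$ itself, so $\si^n(\ell)=C$, forcing $\ell$ to be a pullback of $C$ and hence already in $\sim_U$, a contradiction. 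In the (pre)periodic case, passing to a suitable iterate we may assume $\ell$ is periodic; at least one endpoint of $\ell$ lies outside $U'$, and by Lemma~\ref{descru} its periodic orbit must enter $H(C)$, placing an iterate of $\ell$ inside $\ch(\ol{H(C)})$. Since $\si$ collapses $C$ to a point and maps $\ch(\ol{H(C)})$ onto the entire $\ol\disk$, tracking further iterates of $\ell$ and using that they must remain leaves of $\sim$ (hence stay in the convex hulls of holes of $U$ or become edges of $U$) forces $\ell$ eventually to coincide with or cross an edge of $U$, again contradicting that $U$ is a gap of $\sim$.
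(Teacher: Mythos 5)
Your plan is broadly sensible, but there is a genuine gap in the uniqueness argument that the paper avoids by taking a different route.

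The critical misstep is the sentence ``As $\ell$ is an edge of some gap of $\lam_\sim$, Lemma~\ref{l:maj}(3) implies that $\ell$ is (pre)critical or (pre)periodic.'' Lemma~\ref{l:maj}(3) only yields the (pre)critical/(pre)periodic dichotomy when $\ell$ is an edge of a \emph{periodic} laminational set; you have not shown this, nor have you shown that $\ell$ is an edge of a gap at all (a leaf of $\lam_\sim$ may be a two-sided limit of other leaves). So your case split on ``(pre)critical vs.\ (pre)periodic'' is not justified, and a priori $\sim$ could contain a wandering, non-precritical leaf. Furthermore, the (pre)periodic branch ends with a hand-wave: you assert that ``tracking further iterates of $\ell$ \dots forces $\ell$ eventually to coincide with or cross an edge of $U$'' without explaining why. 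Once an iterate of $\ell$ lands in $\ch(\ol{H(C)})$, the map $\si$ wraps $H(C)$ around the entire circle; the image leaf could re-enter any hole of $U$ or even have an endpoint in $U'$, and nothing you have said prevents the orbit from wandering indefinitely among holes of $U$ without ever being pinned to an edge.

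The paper's argument sidesteps both problems by first proving (Lemma~\ref{l:canlam1s}) that any invariant lamination co-existing with $U$ automatically co-exists with the whole canonical lamination $\sim_U$: a crossing between a leaf of $\sim$ and a leaf of $\sim_U$ would persist under $\si$ because holes of $U$ map one-to-one onto holes of $U$, and every leaf of $\sim_U$ eventually maps to an edge of $U$, giving a contradiction with $U$ being a gap of $\sim$. Once co-existence is known, any extra leaf $\ell$ of $\sim$ lies inside a gap of $\sim_U$ other than $U$, hence in a pullback of $U$; such a pullback maps into $U$ under some iterate, so $\si^n(\ell)$ is a leaf of $\sim$ inside $U$ and therefore an edge of $U$, and since the pullbacks of edges of $U$ under $\sim_U$ are uniquely determined, $\ell$ is already a leaf of $\sim_U$. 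This replaces your case analysis entirely and, in particular, \emph{derives} that every leaf of $\sim$ is precritical rather than trying to assume it from Lemma~\ref{l:maj}. If you want to pursue your more direct approach, you would first need to establish co-existence (or something equivalent) to locate $\ell$ inside a pullback of $U$, at which point the dichotomy you wanted becomes a consequence rather than an input.
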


A proof of this lemma will be given after we establish one general
result.
We say that a
lamination $\sim$ {\em co-exists with} a laminational set $G$ if no
leaves of $\sim$ intersect the edges of $G$ in $\disk$.
We say that a lamination $\sim$ co-exists with a lamination
$\approx$ if no leaf of $\sim$ intersects a leaf of $\approx$ in $\disk$.

\begin{lem}
  \label{l:canlam1s}
  Suppose that a cubic invariant lamination $\sim$ co-exists with
  a stand alone invariant quadratic gap $U$ of regular critical type.
  Then $\sim$ also co-exists with the canonical lamination $\sim_U$ of $U$.
\end{lem}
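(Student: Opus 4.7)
The plan is to argue by contradiction via a downward induction on the ``time to an edge of $U$''. For each leaf $\bar\ell$ of $\lam_{\sim_U}$, let $N(\bar\ell)$ denote the least non-negative integer with $\si^{N(\bar\ell)}(\bar\ell)$ an edge of $U$; this is finite by the construction of $\sim_U$, and $N(\bar\ell)=0$ precisely when $\bar\ell$ is an edge of $U$. Assume, toward a contradiction, that some leaf of $\lam_\sim$ crosses a leaf of $\lam_{\sim_U}$ in $\disk$, and pick a pair $(\ell,\bar\ell)$ with $\ell\in\lam_\sim$, $\bar\ell\in\lam_{\sim_U}$, $\ell$ crossing $\bar\ell$ in $\disk$, and $N(\bar\ell)$ minimal. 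When $N(\bar\ell)=0$, the leaf $\bar\ell$ is an edge of $U$, so the assumed co-existence of $\sim$ with $U$ is immediately contradicted; henceforth assume $N=N(\bar\ell)\ge 1$.

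Since $N\ge 1$, the leaf $\bar\ell$ lies strictly outside $U$, in a component $W$ of $\ol\disk\sm U$ bounded by a unique edge $e$ of $U$ and the hole $H_U(e)$. From the construction of $\sim_U$ together with the fact that its classes have size at most two, both endpoints of $\bar\ell$ lie strictly in the open arc $H_U(e)$. Since $\ell$ does not cross $e$ (by co-existence with $U$) but does cross $\bar\ell\subset\ol W$, both endpoints of $\ell$ lie in $\ol{H_U(e)}$. The goal is to verify that $\si(\ell)\in\lam_\sim$ crosses $\si(\bar\ell)\in\lam_{\sim_U}$ in $\disk$; combined with $N(\si(\bar\ell))=N-1<N$, this will violate the minimality of the chosen pair and complete the proof (when $N=1$, the crossing of $\si(\ell)$ with the edge $\si(\bar\ell)$ of $U$ directly contradicts the co-existence hypothesis).

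The main step -- and the principal obstacle, owing to the criticality of $M(U)=C$ -- is the preservation of the crossing under $\si$. If $e\ne C$, then by Lemma~\ref{bndcrit} one has $|H_U(e)|<\tfrac 13$, so $\si$ restricts to an orientation-preserving homeomorphism of $\ol{H_U(e)}$ onto its image; the interleaving of the endpoints of $\ell$ and $\bar\ell$ (and hence the crossing) is then preserved tautologically. If $e=C$, then $|H_U(C)|=\tfrac 13$; writing $p,q$ for the endpoints of $C$, one has $\si(p)=\si(q)=\si(C)$ with $\si$ an orientation-preserving bijection from $(p,q)=H_U(C)$ onto $\uc\sm\{\si(C)\}$. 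The possibility $\ell=C$ is excluded, since an edge of $U$ cannot cross $\bar\ell\subset W$; hence at least one endpoint of $\ell$ lies strictly in $(p,q)$. As both endpoints of $\bar\ell$ already lie strictly in $(p,q)$, cutting $\uc$ open at $\si(C)$ identifies the cyclic order on $\uc$ of the $\si$-images of the four endpoints with the linear order of their preimages on $[p,q]$; the interleaving witnessing the crossing of $\ell$ and $\bar\ell$ therefore persists to $\si(\ell)$ and $\si(\bar\ell)$, concluding the induction.
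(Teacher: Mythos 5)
Your proof is correct and follows essentially the same route as the paper's: assume a crossing, locate both leaves in a common hole of $U$, push the crossing forward under $\si$, and derive a contradiction because every leaf of $\sim_U$ eventually maps to an edge of $U$. Your treatment is more detailed — in particular you carefully verify that the crossing persists through the critical hole $H_U(C)$, a step the paper compresses into the remark that every hole of $U$ maps one-to-one onto its image.
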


\begin{proof}
  Suppose that a leaf $\ell$ of $\sim$ crosses a leaf $\ell_U$ of $\sim_U$ in $\disk$.
  By the assumption of the lemma, this cannot happen if $\ell$ or $\ell_U$
  intersects $U$.
  Hence both $\ell$ and $\ell_U$ have their endpoints in some hole of $U$.
  Every hole of $U$ maps one-to-one onto its image.
  It follows that $\si(\ell)$ and $\si(\ell_U)$ also intersect in $\disk$.
  However, any leaf of $\sim_U$ eventually maps to an edge of $U$,
  a contradiction.
\end{proof}

\begin{proof}[Proof of Lemma \ref{l:canlam1}]
Suppose that $\sim$ is a cubic invariant lamination such that $U$ is
a gap of $\sim$. It follows that $\sim$ co-exists with $U$, hence by
Lemma \ref{l:canlam1s} it also co-exists with the canonical
lamination $\sim_U$. If there is a leaf $\ell$ of $\sim$ not
contained in $\sim_U$, then this leaf is in some pullback of $U$.
Hence the leaf $\ell$ eventually maps to $U$. Since $U$ is a
gap of $\sim$, the leaf $\ell$ eventually maps to an edge of
$U$. It follows that $\sim$ is a leaf of $\sim_U$, since pullbacks
of edges of $U$ are well defined. For the same reason, all leaves of
$\sim_U$ are also leaves of $\sim$.
\end{proof}

The proof of Lemma~\ref{l:canlam2} is similar to that of Lemmas
~\ref{l:canlam1} and \ref{l:canlam1s}.

\begin{lem}
  \label{l:canlam2}
Let $U$ be an invariant quadratic gap of periodic type. Then
$\sim_U$ is the unique invariant lamination such that $U$ and the
vassal $V(U)$ are among its gaps. Moreover, if a cubic invariant
lamination $\sim$ co-exists with $U$ and $V(U)$ then $\sim$ also
co-exists with the canonical lamination $\sim_U$ of $U$.
\end{lem}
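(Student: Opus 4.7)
The plan is to parallel the two-step proof of Lemmas~\ref{l:canlam1} and~\ref{l:canlam1s}, handling the added complication that $\sim_U$ now has both $U$ and $V(U)$ as gaps, sharing the common edge $M(U)$. I would first establish the co-existence claim, and then deduce uniqueness by the same argument as in Lemma~\ref{l:canlam1}.

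For co-existence, suppose a leaf $\ell$ of $\sim$ crosses a leaf $\ell_U$ of $\sim_U$ in $\disk$. Since $\sim$ co-exists with both $U$ and $V(U)$ while $\sim_U$ by construction contains no leaf crossing an edge of $U$ or $V(U)$, the crossing lies in some component $Q$ of $\disk \setminus (U \cup V(U))$. The crucial geometric step is to show that every leaf of $\sim$ or of $\sim_U$ inside such a $Q$ has both its endpoints on a single arc of $\bd Q \cap \uc$, and this arc has length strictly less than $\frac{1}{3}$. For components bounded by a single non-$M(U)$ edge and a single $\uc$-arc the estimate follows from Lemma~\ref{bndcrit} (for holes of $U$) and a direct computation with $L = |H(M(U))| \in (\frac{1}{3},\frac{2}{3})$: the central hole of $V(U)$ behind $\ol{b''a''}$ has length $\frac{2}{3}-L$, while every other hole of $V(U)$ lies inside $[a,b'']$ or $[a'',b]$, each of length $L-\frac{1}{3}$. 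The only special component is the region $Q_0$ between $M(U)$ and $\ol{b''a''}$, whose $\uc$-boundary consists of the two arcs $[a,b'']$ and $[a'',b]$; to rule out cross-arc leaves I would observe that a chord strictly inside $Q_0$ with one endpoint in each arc would have to have both endpoints in $N(U)$ (lest it cross a non-central edge of $V(U)$), so it would be an edge of $V(U)$ joining the two arcs, and the only such edge is $\ol{b''a''}$, which lies on $\bd Q_0$.

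Because $\si$ is injective on any arc of length less than $\frac{1}{3}$, linking of endpoints is preserved, so $\si(\ell)$ and $\si(\ell_U)$ still cross; neither image crosses an edge of $U$ or $V(U)$, so both lie in a common component of $\disk \setminus (U \cup V(U))$ and the argument iterates. By construction of $\sim_U$, some iterate $\si^n(\ell_U)$ is an edge of $U$ or $V(U)$, and then the crossing leaf $\si^n(\ell)$ of $\sim$ contradicts co-existence. For uniqueness, let $\sim$ be an invariant lamination with $U$ and $V(U)$ as gaps; then $\sim$ co-exists with $\sim_U$ by the first part. The inclusion $\sim_U\subseteq\sim$ follows by induction on pullback depth: edges of $U$ and $V(U)$ are leaves of $\sim$ by assumption, and for any such leaf $e$ the backward-invariance property~(D2), together with the gap property of $U$ and $V(U)$, forces the outside pullback $e^*$ to itself be a leaf of $\sim$, since any other splitting of the preimage class would place a leaf of $\sim$ in the interior of $U$ or $V(U)$. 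The reverse inclusion $\sim\subseteq\sim_U$ is obtained verbatim as in Lemma~\ref{l:canlam1}: any leaf of $\sim$ sits in a component of $\disk \setminus (U \cup V(U))$, and the sub-$\frac{1}{3}$ estimate on bounding arcs forces its images to eventually meet $U\cup V(U)$, where they can only be edges.

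The main obstacle will be the geometric bookkeeping around $H(M(U))$: verifying the uniform bound $|I|<\frac{1}{3}$ on the bounding arcs and, especially, handling the exceptional component $Q_0$ with two bounding arcs, where one must use co-existence with $V(U)$ to rule out cross-arc leaves of $\sim$ and $\sim_U$. Once this is secured, the rest is a routine iteration of $\si$ together with the~(D2) splitting argument already used in Lemma~\ref{l:canlam1}.
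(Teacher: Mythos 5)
Your proposal follows exactly the two-step pattern (co-existence via Lemma~\ref{l:canlam1s}, then uniqueness via Lemma~\ref{l:canlam1}) that the paper itself invokes when it says the proof of Lemma~\ref{l:canlam2} is ``similar to that of Lemmas~\ref{l:canlam1} and \ref{l:canlam1s},'' and the core iteration argument is sound. The one place where you go astray is the ``special component $Q_0$.'' No such component of $\disk\sm(U\cup V(U))$ exists: the region between $M(U)$ and $M''(U)=\ol{b''a''}$ with $\uc$-boundary $[a,b'']\cup[a'',b]$ is filled by the vassal $V(U)=\ch(N(U))$ itself together with its holes. Indeed $a,b,a'',b''\in N(U)$, so $M(U)$ and $M''(U)$ are both edges of $V(U)$, and every other edge of $V(U)$ has both endpoints in a single one of the two short arcs; hence every component of $\disk\sm(U\cup V(U))$ is bounded by exactly one edge of $U$ or $V(U)$ and one arc of $\uc$ of length strictly less than $1/3$ (non-major holes of $U$ by Lemma~\ref{bndcrit}; holes inside $[a,b'']$ or $[a'',b]$ of length $<L-1/3<1/3$; and the hole $(b'',a'')$ of length $2/3-L<1/3$). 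So there is nothing special to handle, and the uniform sub-$1/3$ estimate holds across all components. The argument you give to dispatch the supposed $Q_0$ is also not quite right as stated — a chord with both endpoints in $N(U)$ need not be an edge of $V(U)$; it may lie in the interior of the gap $V(U)$. What actually rules such a chord out of the crossing argument is simply that $\ell_U$ can never lie in the interior of $V(U)$ (since $V(U)$ is a gap of $\sim_U$), so if $\ell$ did, the crossing would again have to happen on an edge of $V(U)$, contradicting co-existence. With the $Q_0$ detour removed, your proof is correct and matches the paper's intended argument.
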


\begin{dfn}\label{d:fa}
Consider the set of all points with orbits \emph{above}
$\di$ (it is easy to see that this is a Cantor set). The convex hull
of this set is a forward invariant gap which maps onto itself two-to-one.
It is denoted $\fg_a$ (``a'' from ``above'').
Similarly, define an invariant gap $\fg_b$ whose basis consists of
all points with orbits \emph{below} $\di$ (``b'' in $\fg_b$
comes from ``below''). Both gaps $\fg_a, \fg_b$ together with their
pullbacks into holes of their bases define a lamination
$\sim_{\di}$ (or $\lam(\di)$) called the \emph{canonical lamination}
of $\di$ (the construction is classical for complex combinatorial
dynamics, see \cite{thu85}).
\end{dfn}

\subsection{Cubic laminations with no rotational gaps or leaves}

We now describe all cubic laminations with no periodic rotational
gaps or leaves (a {\em rotational leaf} is a periodic leaf with
non-refixed endpoints).

\begin{lem}
\label{l:pc0}
 Suppose that a cubic invariant lamination $\sim$ has no periodic rotational
 gaps or leaves.
Then either $\sim$ is empty, or it coincides with the canonical lamination of
an invariant quadratic gap.
\end{lem}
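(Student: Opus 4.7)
The plan is to locate an invariant quadratic gap $U$ which is a $\lam_\sim$-gap, and then invoke the uniqueness portions of Lemmas \ref{l:canlam1} and \ref{l:canlam2} to obtain $\sim=\sim_U$. The hypothesis eliminates Siegel gaps (they are rotational by Definition \ref{d:relam}), forces every periodic leaf of $\sim$ to have refixed endpoints, and forces every finite periodic class of $\sim$ to consist entirely of refixed vertices. Assume $\sim$ is non-empty.

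The argument splits according to whether $\sim$ contains a critical leaf. If $\sim$ contains a critical leaf $c$, then by Lemma \ref{pic} the set $U:=G(c)=\ch(\Pi(c))$ is a stand alone invariant quadratic gap. The extended caterpillar case for $c$ is excluded because a caterpillar gap contains a countable chain of concatenated leaves sharing endpoints, which would generate a $\sim$-class with infinitely many points in violation of (D4) of Definition \ref{d:si-inv-lam}; thus $c$ is regular critical or of periodic type. If $\sim$ contains no critical leaf then all critical data of $\sim$ is carried by critical gaps; a proper degree-$3$ invariant Fatou gap is excluded by a combinatorial count of $\si_3^n$-fixed points against the number of edges with refixed endpoints needed for such a gap (the two $\si_3$-fixed points and the limited supply of higher-period refixed points cannot accommodate the required edge structure), and a further case analysis rules out configurations with no invariant critical gap. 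Hence at least one invariant quadratic (degree-$2$) Fatou gap exists, and we take $U$ to be such a gap.

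To see that $U$ is a $\lam_\sim$-gap, assume for contradiction that some leaf $\ell\in\sim$ lies in the interior of $U$. No leaf of $\sim$ crosses the major $M(U)$ inside $\disk$ (either $M(U)=c$, or $M(U)$ is a periodic edge of $U$ already in $\sim$), so Lemma \ref{l:inv-subgap} applies and the $\psi_U$-images of leaves of $\sim$ meeting $U$ form a non-trivial $\si_2$-invariant geo-lamination containing the non-degenerate leaf $\psi_U(\ell)$. This projected lamination inherits the no-rotational-periodic property and by the classical quadratic analogue (the Main Cardioid case for $\si_2$) must be trivial --- a contradiction. Hence $U$ is a $\lam_\sim$-gap, and Lemma \ref{l:canlam1} (regular critical case) or Lemma \ref{l:canlam2} (periodic type case) delivers $\sim=\sim_U$. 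The main obstacle is this final step: one must transport the no-rotational-periodic hypothesis through $\psi_U$ and invoke the quadratic analogue, checking that rotational periodic objects of the $\psi_U$-projection lift to rotational periodic objects of $\sim$ inside $U$ despite the edge-collapses effected by $\psi_U$.
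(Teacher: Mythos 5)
Your proposal diverges from the paper's route and contains several genuine gaps.

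The paper starts by invoking Theorem~\ref{t:fxpt} to produce an invariant $\lam_\sim$-set $U$ directly; under the hypothesis this must be the diameter $\di$ or an invariant Fatou gap of degree $>1$, and the argument then leans on Lemma~\ref{l:spec-GvsL} to detect rotational objects before closing out with Lemmas~\ref{l:canlam1} and~\ref{l:canlam2}. You instead split on whether $\sim$ contains a critical leaf, a genuinely different decomposition. The two halves of your split are not in equally good shape.

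In your case (b) (``$\sim$ has no critical leaf''), the argument is not given: ``a combinatorial count'' ruling out a proper degree-$3$ invariant Fatou gap, and ``a further case analysis rules out configurations with no invariant critical gap'', are placeholders, not proofs. The paper's case $U=\di$ --- where $\fg_a$ and $\fg_b$ are invariant quadratic Fatou gaps and $\sim$ has no critical leaf --- must fall in your case (b), and the paper handles it with an explicit argument that each of $\fg_a$, $\fg_b$ is isolated from $\di$, using Lemma~\ref{l:spec-GvsL}(2) to get a contradiction otherwise. Nothing in your sketch reconstructs that.

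In your case (a), the step showing $U=G(c)$ is a $\lam_\sim$-gap is circular. To apply Lemma~\ref{l:inv-subgap} you need that no leaf of $\sim$ crosses a major of $G(c)$ in $\disk$. Your justification --- ``either $M(U)=c$, or $M(U)$ is a periodic edge of $U$ already in $\sim$'' --- is unwarranted in the periodic-type case: $c$ being a leaf of $\sim$ does \emph{not} imply the periodic major $M(U)$ is a leaf of $\sim$ (this is precisely the substance of Lemma~\ref{l:spec-GvsL}(3), which the paper proves and uses). Moreover, even if you had a leaf $\ell$ of $\sim$ inside $U$, the chord $\psi_U(\ell)$ can be degenerate (both endpoints of $\ell$ may lie in one hole of $U$), so the projected quadratic geo-lamination is not automatically non-trivial. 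Finally, to conclude via Lemma~\ref{l:canlam2} in the periodic-type case you need \emph{both} $U$ and its vassal $V(U)$ to be gaps of $\sim$, and your argument never addresses $V(U)$. The ``classical quadratic analogue'' you invoke is Proposition~\ref{p:coremin-qua}, which appears later in the paper and itself relies on machinery you have not set up; and you yourself flag the unresolved lifting issue through $\psi_U$. Taken together, the plan outlines a plausible alternative decomposition, but the substantive steps --- ruling out degree $3$, establishing that $U$ is a $\sim$-gap, establishing that $V(U)$ is a $\sim$-gap, and transporting the no-rotational hypothesis through $\psi_U$ --- are either absent or incorrect as stated, and these are exactly the points where the paper does the real work via Lemma~\ref{l:spec-GvsL}.
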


In fact, we need a more general result. Recall that for an invariant
quadratic gap of periodic type $U$ we denote by $M''(U)$ the edge of
$V(U)$ with the same image as the major $M(U)$.

\begin{lem}
 \label{l:spec-GvsL} Let $U$ be a stand alone invariant quadratic
 gap. The following claims hold.

 \begin{enumerate}
 \item If $U$ is of periodic type and both $M(U)$ and $M''(U)$ co-exist with $\sim$
 but are not contained in the same gap of $\sim$, then $\sim$ has a
 rotational gap or leaf in $V(U)$.

 \item If $\sim$ does not co-exist with the canonical lamination
$\sim_U$ of $U$ but co-exists with $U$, then $\sim$ has a rotational
gap or leaf in $V(U)$, and $M(U)$ and $M''(U)$ are leaves of $\sim$.

 \end{enumerate}
\end{lem}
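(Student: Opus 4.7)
The plan is to prove part (1) by projecting the leaves of $\sim$ that lie inside $V(U)$ to the induced $\si_2$-dynamics on $V(U)$ and invoking a classification of $\si_2$-invariant laminations, then to reduce part (2) to part (1) via the auxiliary claim that $M(U)$ and $M''(U)$ are themselves leaves of $\sim$.

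For part (1), since $\sim$ co-exists with both $M(U)$ and $M''(U)$---the two majors of $V(U)$ viewed as a stand alone Fatou gap of degree $2$ and period $k$---Lemma~\ref{l:inv-subgap} applies, and the $\psi_{V(U)}$-images of leaves of $\sim$ form a $\si_2$-invariant geo-lamination $\lam_2$ on $\uc$. Under this projection $M(U)$ collapses to the $\si_2$-fixed point $0$ and $M''(U)$ collapses to $\frac{1}{2}$. The assumption that $M(U)$ and $M''(U)$ lie in different $\sim$-gaps forces a leaf of $\sim$ strictly inside $V(U)$ separating them, whose $\psi_{V(U)}$-image is a non-degenerate leaf of $\lam_2$ separating $0$ from $\frac{1}{2}$. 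Combined with the sibling/finite-class structure inherited from the cubic lamination $\sim$, a classical $\si_2$-analog of Lemma~\ref{l:pc0} (provable by the same techniques applied to $\si_2$) then produces a periodic rotational gap or leaf of $\lam_2$; lifting through the semi-conjugacy $\psi_{V(U)}$, which preserves rotation numbers of periodic objects, yields the required rotational periodic gap or leaf of $\sim$ inside $V(U)$.

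For part (2), $U$ cannot be of regular critical type, since by Lemma~\ref{l:canlam1s} that would force $\sim$ to co-exist with $\sim_U$, contradicting the hypothesis; hence $U$ is of periodic type and $V(U)$ is defined. The bulk of the work is to show $M(U), M''(U)\in\lam_\sim$. Once this is established, leaves of $\sim$ cannot cross $M(U)$ or $M''(U)$, so $\sim$ co-exists with both, and the failure of co-existence with $\sim_U$ yields a leaf of $\sim$ strictly inside $V(U)$ separating $M(U)$ from $M''(U)$, placing them in distinct $\sim$-gaps; part (1) then delivers the rotational element. To prove $M''(U)\in\lam_\sim$ (the argument for $M(U)$ is easier, as $\sim$ already co-exists with $M(U)$), suppose for contradiction that some leaf $\ell$ of $\sim$ strictly crosses $M''(U)$ in $\disk$. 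Using the identities $\si(a'')=\si(a)$ and $\si(b'')=\si(b)$ from the proof of Lemma~\ref{vassal}, the arc $(b'',a'')=H_{V(U)}(M''(U))$ maps under $\si$ onto $(\si(b),\si(a))=H(\si(M(U)))$; hence $\si(\ell)$ has exactly one endpoint in $H(\si(M(U)))$ and strictly crosses the edge $\si(M(U))$ of $U$, contradicting co-existence of $\sim$ with $U$---except in the boundary sub-case that $\ell$ has an endpoint in $\{a,b\}$. This boundary sub-case, together with the alternative that $M''(U)$ lies strictly inside some $\sim$-gap, is handled by a limiting argument: iterated $\si^k|_{V(U)'}$-pullbacks of appropriate leaves of $\sim$ accumulate on the fixed edges $M(U)$ and $M''(U)$, and closedness of $\lam_\sim$ then promotes the limiting chords into leaves of $\sim$.

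The main obstacle is this boundary sub-case in the proof of the auxiliary claim: when a crossing leaf of $\sim$ shares an endpoint with $M(U)$ or $M''(U)$ the straightforward push-forward-to-$\si(M(U))$ argument breaks down, and one must combine the closedness of the graph of $\sim$ with the expansion of $\si^k$ on $\bd(V(U))$ away from the fixed edges to pin down the limiting chords as genuine leaves of $\sim$.
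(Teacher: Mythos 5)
Your plan for part (1)---collapse $V(U)$ via $\psi_{V(U)}$ and argue in the induced $\si_2$-world---is genuinely different from the paper's argument. The paper never leaves the cubic setting: by Lemma~\ref{l:easytop} it locates a critical gap or leaf $C$ of $\sim$ in the strip $S(M,M'')$ between $M(U)$ and $M''(U)$, distinct from $T$, $T''$ and $V(U)$, and then splits into the cases $\si^k(C)\cap C\neq\0$ and $\si^k(C)\cap C=\0$; in the second case it invokes Theorem~\ref{t:fxpt} directly on $f_\sim$ to produce a $\si^k$-fixed gap or leaf inside $V(U)$, which it then shows is rotational.

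Your route has a genuine gap precisely where the paper's route is careful to stay within the original cubic lamination. Lemma~\ref{l:inv-subgap} only asserts that $\psi_{V(U)}(\lam_\sim)$ is a $\si_2$-invariant \emph{geo-lamination}. But Theorem~\ref{t:fxpt}, and hence any ``$\si_2$-analog of Lemma~\ref{l:pc0}'' whose proof goes through it, is formulated for a lamination $\sim$ in the sense of an equivalence relation (closed graph, finite unlinked classes), so that $J_\sim$ and the topological polynomial $f_\sim$ exist. You never verify that the projected geo-lamination arises from such an equivalence relation, and the parenthetical appeal to ``the sibling/finite-class structure inherited from $\sim$'' does not substitute for that verification. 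A secondary unaddressed point: the assertion that $T\neq T''$ yields a leaf of $\sim$ with a \emph{non-degenerate} $\psi_{V(U)}$-image requires an argument ruling out that every separating leaf is an edge of $V(U)$, sits inside the closure of a hole of $V(U)$, or crosses a non-major edge of $V(U)$---in all of these cases the $\psi_{V(U)}$-image is degenerate or undefined. Finally, your treatment of part (2) hinges on first showing $M''(U)\in\lam_\sim$; you correctly isolate the obstruction (a crossing leaf of $\sim$ must share an endpoint with $M(U)$, so the push-forward to $\si(M(U))$ does not kill it), but the closing ``limiting argument'' is a plan, not a proof, and until it is carried out you cannot reduce (2) to (1). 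The paper, by contrast, treats (2) as an easy consequence of (1) and does not route through the claim $M(U),M''(U)\in\lam_\sim$ as a preliminary step.
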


\begin{proof}

(1) Suppose that $M(U)=M$ and $M''(U)=M''$ are not contained in the same
gap. Let $T\supset M$ and $T''\supset M''$ be the gaps or leaves of
$\sim$ with the same image; by the assumption, $T\ne T''$. By
Lemma~\ref{l:easytop} there is a critical gap or leaf $C$ contained
in the strip $S(M, M'')$ between $M$ and $M''$ and distinct from
either $T$ or $T''$; by the assumption, $C\ne V(U)$ and hence
$\si^k(C)\ne C$. If $\si^k(C)$ is non-disjoint from $C$, then $C$ is
a periodic infinite gap. Consider the concatenation $X$ of $C$ with
its $\si^k$-images. Because of the dynamics of $M$ it follows that
$X$ cannot contain $T$. Hence $X$ is contained in $S(M, M'')$ which
implies that $C\subset V(U)$. If $C$ is Siegel gap, we are done.
Otherwise $C$ is a periodic Fatou gap of degree greater than 1 which
easily implies that, since $C$ and $\si^k(C)$ are not disjoint, there is
a finite rotational gap or leaf which borders $X$, and we are done.

Now let $\si^k(C)$ be disjoint from $C$. Observe that $\si^k$ repels
$C$ away from $M$. Thus, there is a hole $H$ of $C$, disjoint from
the strip between $M$ and $C$, and such that $\si^k(C)\subset
\ol{H}$. If $H$ does not contain $M''$, then $H$ is located between
$M$ and $M''$. By Theorem~\ref{t:fxpt} applied to $H$ itself there
is a $\si^k$-fixed gap or leaf $Z$ with the basis in $H$. If $H$
contains $M''$, we apply Theorem~\ref{t:fxpt} to the strip between
$C$ and $M''$ and find the $\si^k$-fixed gap or leaf $Z$ again.
Thus, $Z\subset S(M, M'')$. Since $\si^k(Z)=Z$, we see that
$Z\subset V(U)$. If $Z$ is rotational, we are done. Otherwise $Z$ is
a Fatou gap of degree greater than $1$. Hence $C$ is a gap from
the orbit of $Z$ and $\si^k(C)=C$, a contradiction.

(2) Easily follows from (1).
\end{proof}

\begin{proof}[Proof of Lemma \ref{l:pc0}] By
Theorem \ref{t:fxpt} there exists an invariant $\lam_\sim$-set $U$.
Since $U$ is not a rotational gap or leaf, it must be either a fixed
non-rotational leaf or an invariant Fatou gap of degree $>1$
(it cannot be a fixed finite non-rotational gap because there
are only two $\si$-fixed points in $\uc$, namely $0$ and $1/2$).
There is only one fixed non-rotational leaf, namely, $\ol{0\frac 12}$.

Suppose first that $\lam_\sim$ contains the leaf $\ol{0\frac 12}$.
By the invariance property of $\sim$, both leaves
$\ell_a=\ol{\frac16\frac13}$ and $\ell_b=\ol{\frac23\frac56}$ are leaves of $\sim$
(there is no other way to connect the four points $\frac 16$, $\frac 13$,
$\frac 23$ and $\frac 56$ by leaves).

We claim that if $\di$ is isolated from above then $\fg_a$ is a gap
of $\sim$. Indeed, if $\di$ is isolated from above, then there is a
$\sim$-gap $G_a$ located above $\di$. It cannot be finite, hence it
is a quadratic invariant gap. The description of all such gaps given
in Subsection~\ref{s:invquagap} implies that $G_a=\fg_a$. Similarly,
if $\di$ is isolated from below, $\fg_b$ is a $\sim$-gap. Hence if
$\di$ is isolated from both sides, both $\fg_a$ and $\fg_b$ are
$\sim$-gaps. By Lemma~\ref{l:canlam2}, then $\sim$ coincides with the
canonical lamination $\sim_{\fg_a}=\sim_{\fg_b}$. Suppose that $\di$
is not isolated from above. Then by Lemma~\ref{l:spec-GvsL}(2) we
get a contradiction with our assumption that $\lam_\sim$ has no
rotational gaps or leaves.

Now assume that $U$ is an invariant Fatou gap of degree $>1$ with no
fixed edges. If the degree of $U$ is three, $\sim$ is empty. Let the
degree of $U$ be two. By Lemma \ref{l:spec-GvsL}(3), the fact that
$\sim$ does not have any rotational gaps or leaves implies that
$\sim$ co-exists with the canonical lamination $\sim_U$. If $U$ is
of regular critical type, then the statement now follows from Lemma
\ref{l:canlam1}. If there are no leaves of $\sim$ in the vassal
$V(U)$, then the statement follows from Lemma \ref{l:canlam2} and
the fact that $U$ is a gap of $\sim$. If there are leaves of $\sim$
in $V(U)$, then by Lemma \ref{l:spec-GvsL} there is a rotational gap
or leaf in $V(U)$, a contradiction.
\end{proof}

\section{Invariant rotational sets}\label{s:rotgaps}

In this section, we study invariant rotational laminational sets.
Fix any such set (a stand alone gap) $G$.
There are one or two majors of $G$.
We classify invariant rotational gaps by types.
This classification roughly follows Milnor's classification of
hyperbolic components in slices of cubic polynomials and quadratic
rational functions \cite{M, miln93}.
Namely, we say that
\begin{itemize}
  \item the gap $G$ is of type A (from ``Adjacent'') if $G$ has only one
  major (whose length is at least $\frac 23$);
  \item the gap $G$ is of type B (from ``Bi-transitive'') if $G$
  has two majors that belong to the same periodic cycle;
  \item the gap $G$ is of type C (from ``Capture'') if it is not type B,
  and one major of $G$  eventually maps to  the other major of $G$;
  \item the gap $G$ is of type D (from ``Disjoint'') if there are two
  majors of $G$, whose orbits are disjoint.
\end{itemize}
Clearly, finite rotational gaps cannot be of  type C, and infinite
rotational gaps cannot be of  type B.

\subsection{Finite rotational sets}\label{s:finrotset}
Let $G$ be a finite invariant rotational set. By \cite{kiwi02},
there are either one or two periodic orbits forming the set of
vertices of $G$. These periodic orbits are of the same period
\emph{denoted in this section by $k$} (as we fix $G$ in this
section, we omit using $G$ in the notation). If vertices of $G$ form
two periodic orbits, points of these orbits alternate on $\uc$,
thus we can talk of one or two periodic orbits of edges of $G$
(in the latter case, $G$ is of type D). We now give examples.

\begin{example}\label{fingap1}
Consider the invariant rotational gap $G$ with vertices
$\frac7{26}$, $\frac4{13}$, $\frac{11}{26}$, $\frac{10}{13}$,
$\frac{21}{26}$ and $\frac{12}{13}$.
This is a gap of type D.
The major leaf $M_1$ connects $\frac{12}{13}$ with $\frac{7}{26}$ and
the major leaf $M_2$ connects $\frac{11}{26}$ with $\frac{10}{13}$.
These major leaves belong to two distinct periodic orbits of edges of $G$.
The major hole $H_G(M_1)$ contains $0$ and the major hole
$H_G(M_2)$ contains $\frac12$.
\end{example}

\begin{figure}
\includegraphics[width=6cm]{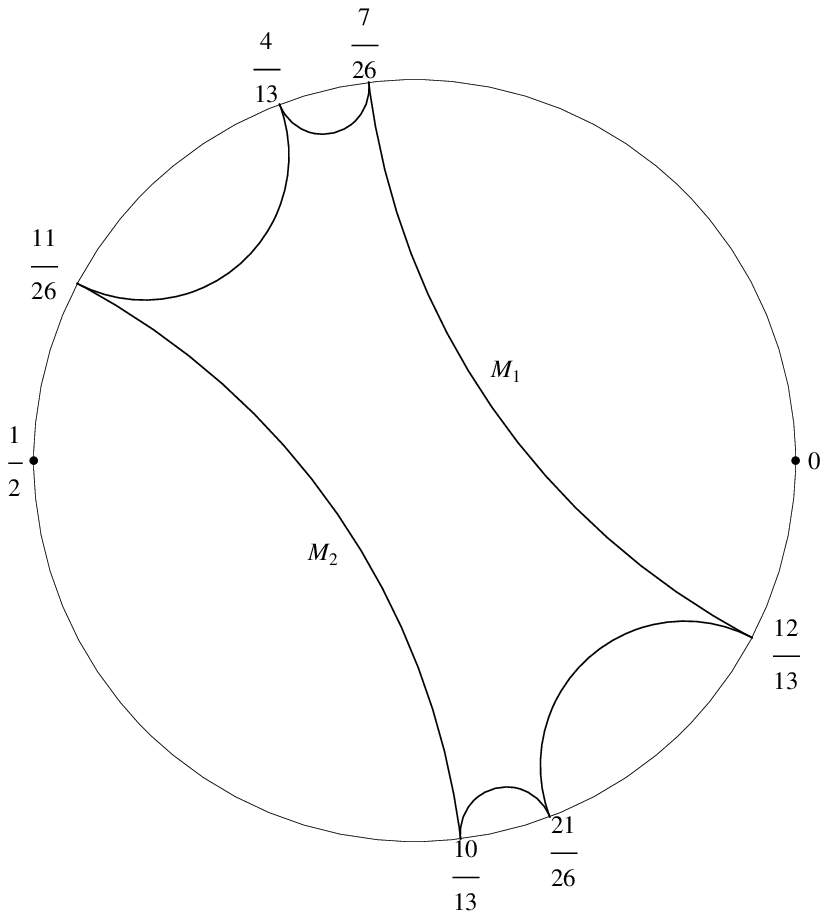}
\includegraphics[width=6cm]{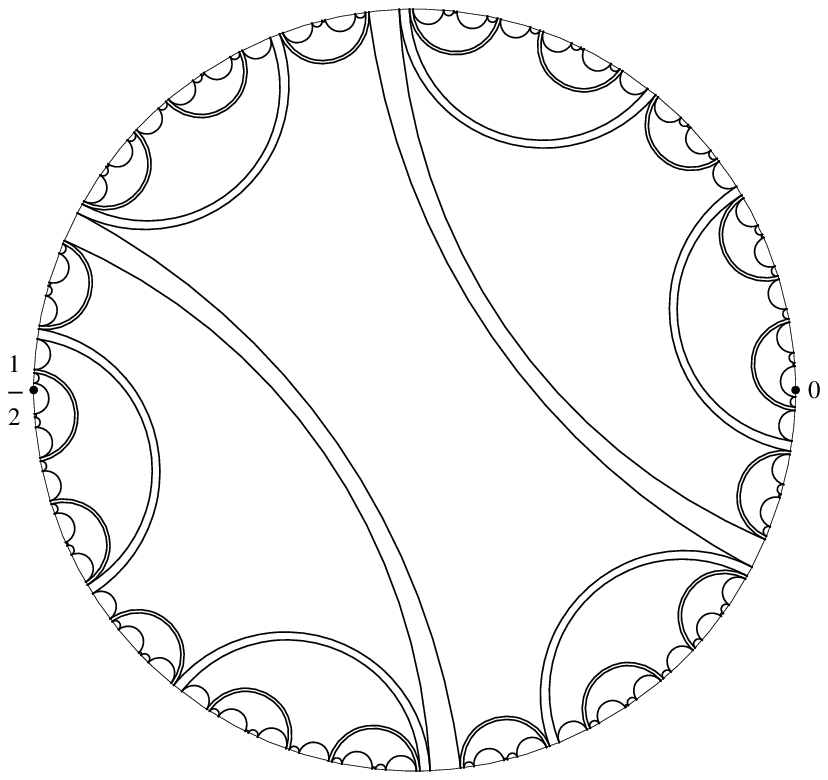}
\caption{The rotational gap described in Example \ref{fingap1} and
its canonical lamination.}
\end{figure}

The next example can be obtained by considering \emph{one} periodic orbit
of vertices in the boundary of the gap from Example~\ref{fingap1}

\begin{example}\label{fingap2}
Consider the finite gap $G$ with vertices $\frac7{26}$,
$\frac{11}{26}$ and $\frac{21}{26}$.
This is a gap of type B.
The first major leaf $M_1$ connects $\frac{21}{26}$ with $\frac{7}{26}$
and the second major leaf $M_2$ connects $\frac{11}{26}$ with $\frac{21}{26}$.
The edges of $G$ form one periodic orbit to which
both $M_1$ and $M_2$ belong.
The major hole $H_G(M_1)$ contains $0$ and the major hole
$H_G(M_1)$ contains $\frac12$.
\end{example}


\begin{figure}
\includegraphics[width=6cm]{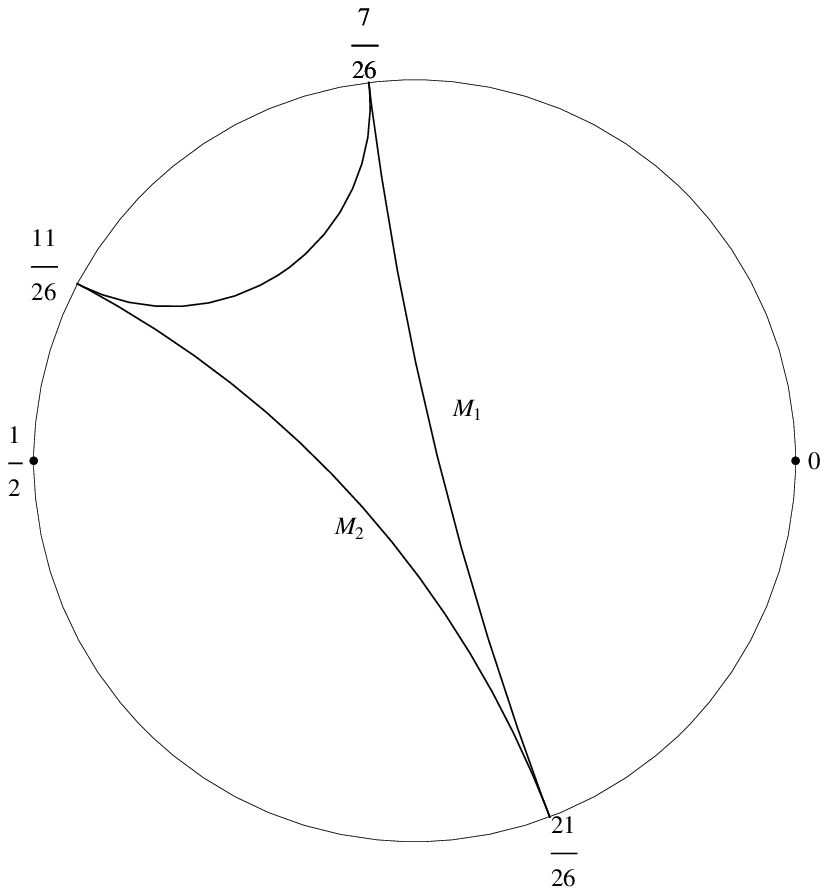}
\includegraphics[width=6cm]{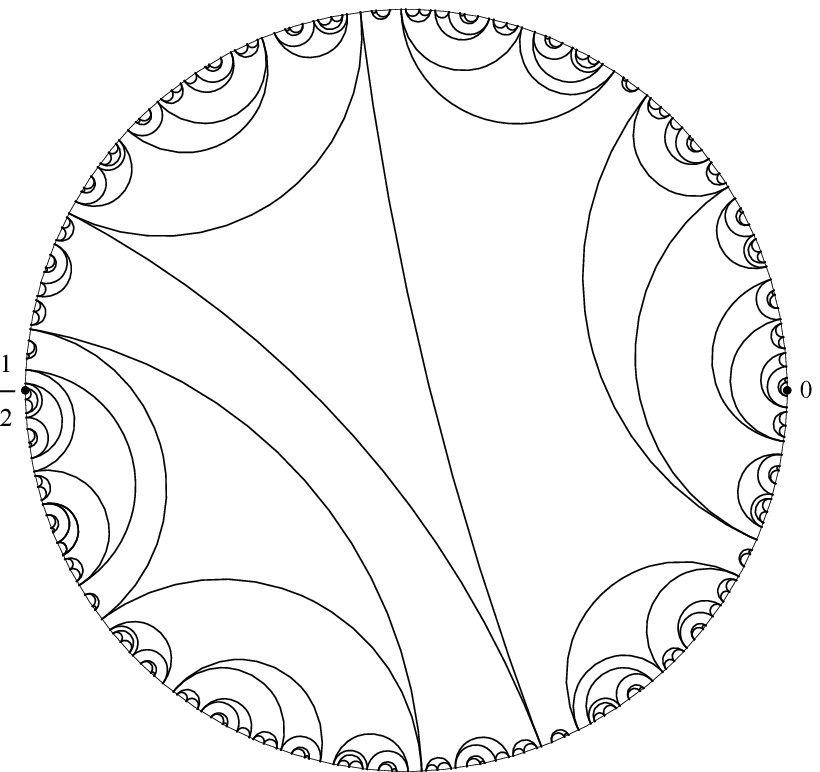}
\caption{The rotational gap described in Example \ref{fingap2} and
its canonical lamination.}
\end{figure}


\begin{example}\label{fingap3}
Consider the finite gap $G$ with vertices $\frac1{26}$,
$\frac{3}{26}$ and $\frac{9}{26}$.
This is a gap of type A.
The only major leaf $M=M_1=M_2$ connects $\frac{9}{26}$ with $\frac{3}{26}$.
The edges of $G$ form one periodic orbit to which $M$ belongs.
The major hole $H_G(M)$ contains $0$ and $\frac12$ and is longer than $\frac23$.
\end{example}


\begin{figure}
\includegraphics[width=6cm]{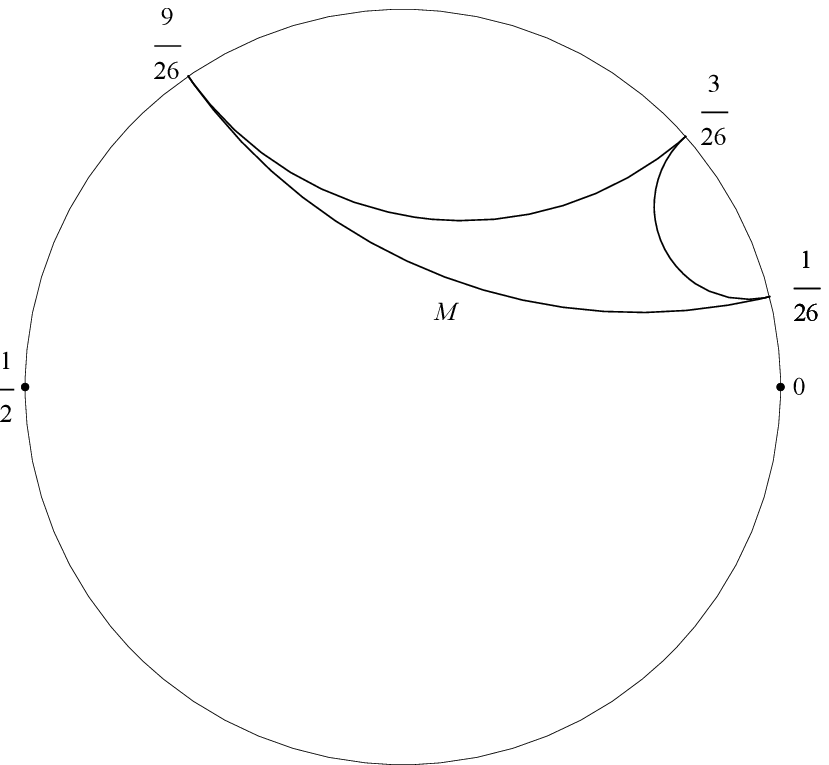}
\includegraphics[width=6cm]{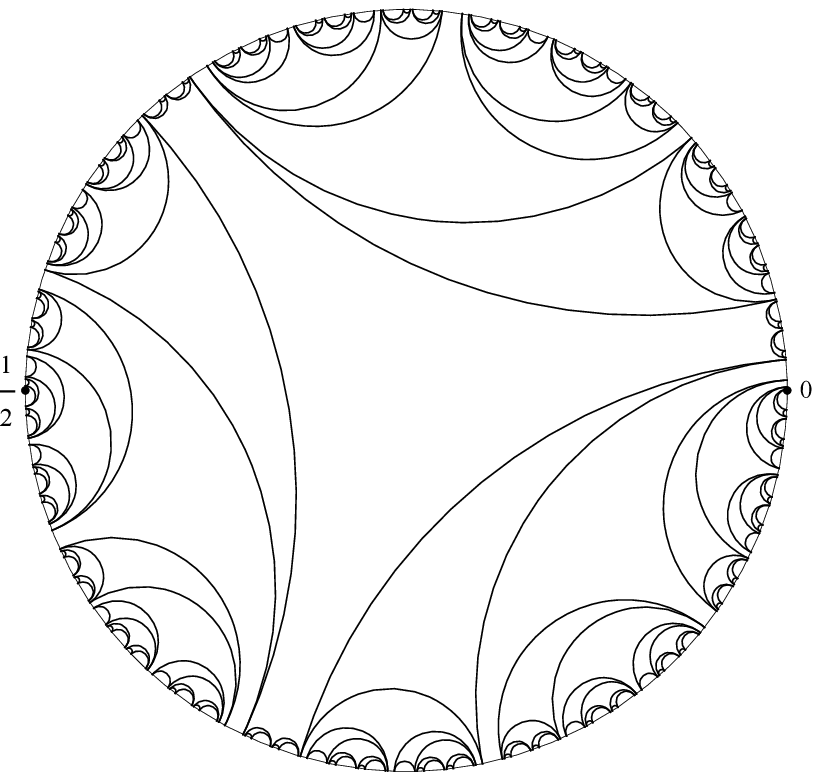}
\caption{The rotational gap described in Example \ref{fingap3} and
its canonical lamination.}
\end{figure}

Let $G=\ell=\ol{ab}$ be an invariant leaf. We can think of $G$ as a
gap with empty interior and two edges $\ol{ab}$ and $\ol{ba}$, and
deal with all finite periodic rotational sets in a unified way. Let
us list all invariant leaves $\ol{ab}$. Either points
$a$, $b$ are fixed, or they form a two-periodic orbit. In the first
case, we have the leaf $\ol{0\frac12}=\di$, in the second case, we
have one of the leaves $\ol{\frac18\frac38}, \ol{\frac14\frac34},
\ol{\frac58\frac78}$. Informally, we regard $\di$ as an invariant
rotational set of type D (even though its rotation number is $0$).

Let $G$ be a finite invariant laminational set with $m$ edges
$\ell_0$, $\dots$, $\ell_{m-1}$. For each $i$, let $\fg_i$ be the
convex hull of all points $x\in \ol{H_G(\ell_i)}$ with
$\si^j(x)\in \ol{H_G(\si^j(\ell_i))}$ for every $j\ge 0$
(compare this to the definition of a vassal in Section
\ref{s:qgaps}). It is straightforward to see that $\fg_i$ are
infinite stand alone gaps such that $\fg_i$ maps to $\fg_j$ if
$\ell_j=\si(\ell_i)$. These gaps are called the {\em canonical
Fatou gaps attached to $G$}. The gap $\fg_i$ is critical if and only
if the corresponding edge $\ell_i$ is a major.

\subsection{Canonical laminations of finite invariant rotational sets}
\label{s:canlamfin}

To every finite invariant rotational set $G$, we associate its
\emph{canonical} lamination $\sim_G$.

Suppose first that $G$ is of type B or D.
Then by definition there are two major edges of $G$, which we denote by
$M_1$ and $M_2$.
Let $H_1$ and $H_2$ be the corresponding holes.
By Lemma \ref{l:fx-maj}, one of the holes contains $0$, and the other
contains $\frac 12$ (say, $0\in H_1$ and $\frac 12\in H_2$). Since
$M_1$ and $M_2$ are periodic, their lengths are strictly greater than $\frac 13$.
Let $U_1$ and $U_2$ be the canonical Fatou gaps attached to $M_1$
and $M_2$, respectively.

A standard pullback construction of Thurston \cite{thu85} allows us
to construct an invariant lamination by pulling back the gap $G$ so
that the pullbacks are disjoint from the interiors of $U_1$ and
$U_2$. More precisely, we can define a lamination $\sim_G$ as
follows: two points $a$ and $b$ on the unit circle are equivalent if
there exists $N\ge 0$ such that $\si^N(a)$ and $\si^N(b)$ are
vertices of $G$, and the chords $\ol{\si^i(a)\si^i(b)}$ are disjoint
from $G$ and from the interior of $U_1\cup U_2$ for $i=0$, $\dots$,
$N-1$. It is straightforward to check that $\sim_G$ is indeed an
invariant lamination. This lamination is called the \emph{canonical
lamination associated with $G$}.

Assume now that $G$ is of type A.
Let $M$ be the major edge of $G$, and $U$ the corresponding
canonical Fatou gap attached to $G$ at $M$.
The canonical lamination $\sim_G$ of $G$ is defined similarly to types B and D.
Namely, two points $a$ and $b$ on the unit circle are equivalent
with respect to $\sim_G$ if there exists $N\ge 0$ such that
$\si^N(a)$ and $\si^N(b)$ are vertices of $G$, and the chord
$\ol{\si^i(a)\si^i(b)}$ is disjoint from $G$ and from the interior of $U$
for $i=0$, $\dots$, $N-1$.

The following lemma is proven similarly to Lemma~\ref{l:canlam1} and
is in fact based upon Thurston's pull back construction of
laminations which have a full collection of critical gaps and
leaves.

\begin{lem}\label{l:gapsuniq}
Suppose that $\sim$ has a finite invariant gap $G$ and all the
canonical Fatou gaps attached to $G$. Then $\sim$ coincides with the
canonical lamination of $G$.
\end{lem}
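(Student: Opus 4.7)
The plan is to follow the template of Lemma~\ref{l:canlam1}: first establish co-existence of $\sim$ with the canonical lamination $\sim_G$ (i.e., no leaf of $\sim$ crosses a leaf of $\sim_G$ inside $\disk$), then deduce that the two laminations have the same leaves. Throughout, let $\F$ denote the union of all canonical Fatou gaps attached to $G$ (so $\F=U_1\cup U_2$ in types B and D, and $\F=U$ in type A); by hypothesis $G$ and each component of $\F$ is a gap of $\sim$.

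For co-existence I would imitate Lemma~\ref{l:canlam1s}. Suppose a leaf $\ell\in\sim$ crosses a leaf $\ell_G\in\sim_G$ in $\disk$. The edges of $G$ and of each canonical Fatou gap are automatically leaves of $\sim$, so they cannot be crossed by $\ell$; therefore $\ell_G$ must be a proper pullback leaf of some preperiod $N\ge 1$, meaning $\si^N(\ell_G)$ is an edge of $G$ while, for $i=0,\dots,N-1$, the chord $\si^i(\ell_G)$ is disjoint from $G$ and from the interior of $\F$. Since $G$ and the components of $\F$ are gaps of $\sim$, the leaf $\ell$ is also disjoint from these interiors; together with $\ell_G$, it lies in a common hole of $G\cup\F$. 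The key observation is that all the critical behavior of $\si$ is concentrated inside the canonical Fatou gaps: by the defining property of $\F$ (its bases capture exactly those points of $\uc$ whose orbits remain trapped behind the majors of $G$), on every hole of $G\cup\F$ the map $\si$ sends crossing chords to crossing chords. Consequently $\si(\ell)\in\sim$ crosses $\si(\ell_G)\in\sim_G$ in $\disk$, and iterating $N$ times yields a leaf of $\sim$ crossing the edge $\si^N(\ell_G)$ of $G$ inside $\disk$, contradicting that $G$ is a gap of $\sim$.

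Once co-existence is proved, I would finish by showing equality of leaves. Every leaf of $\sim_G$ is either an edge of $G$, an edge of a canonical Fatou gap (hence a leaf of $\sim$ by hypothesis), or a pullback obtained by applying the unique branch of $\si^{-1}$ that avoids $\F$. By induction on preperiod and the backward invariance (D2) of $\sim$, each such pullback is also a leaf of $\sim$, so $\sim_G\subseteq\sim$. Conversely, if $\ell\in\sim$ were not a leaf of $\sim_G$, co-existence would force $\ell$ to lie in the interior of some gap of $\sim_G$; since gaps of $\sim_G$ are $G$, the canonical Fatou gaps, and their iterated pullbacks, some $\si^k(\ell)$ would be a chord of $G$ or of a canonical Fatou gap in $\disk$, contradicting the hypothesis that those are gaps of $\sim$. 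The main obstacle is the injectivity statement used in co-existence: one has to verify carefully that outside $\F$ the map $\si$ really is a local homeomorphism on chords, i.e., that $\F$ absorbs the full ramification of $\si$. This is where the explicit construction of the canonical Fatou gaps via majors of $G$ (with holes containing $0$ and $\tfrac12$) is essential, since it guarantees that no hidden critical leaves remain outside $\F$.
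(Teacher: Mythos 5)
Your proposal follows the same two-step template as the paper's proof of Lemma~\ref{l:canlam1}, to which the paper explicitly refers for this lemma: establish co-existence by pushing a hypothetical crossing forward until the pullback leaf of $\sim_G$ hits an edge of $G$, then deduce equality of the laminations from the uniqueness of Thurston pullbacks once all criticality is pinned down by the hypothesis. The step you flag as delicate---that $\si$ preserves crossings on each hole of $G\cup\F$---does go through, since those holes have length at most $\tfrac13$ (all the critical behaviour is absorbed by the canonical Fatou gaps, which by hypothesis are gaps of $\sim$ as well as of $\sim_G$), and the borderline case of a hole of length exactly $\tfrac13$ is harmless because its bounding leaf is a critical edge of some $\fg_i$ and hence already a leaf of both laminations, so it cannot be crossed by either.
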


\begin{lem}
 \label{l:spec-LvsG1} Suppose that a cubic invariant lamination
$\sim$ has a finite invariant gap $G$ of type D. Then,
if a canonical Fatou gap $U$ of $G$ is not a gap of $\sim$, then
$\sim$ has a rotational gap or leaf in $U$. In particular, if $\sim$
does not coincide with the canonical lamination of $G$, then $\sim$
has a rotational periodic gap or leaf in a canonical Fatou gap
attached to $G$.
\end{lem}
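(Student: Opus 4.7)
The plan is first to reduce the question to the case when $U$ is one of the two critical canonical Fatou gaps $U_1$, $U_2$ attached to the majors $M_1$, $M_2$ of $G$, and then to analyze the critical case via the semiconjugacy $\psi_U$.

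For the reduction, each canonical Fatou gap $\fg_j$ of $G$ has $\si$-period $k$ equal to the common period of the two edge-cycles of $G$, and its $\si$-orbit meets exactly one of $U_1$, $U_2$. So there is an integer $m$ with $0\le m<k$ such that $\si^m(\fg_j)\in\{U_1,U_2\}$, and if $\fg_j$ is non-critical then $\si^m|_{\fg_j}$ is an orientation preserving homeomorphism onto its critical image. A leaf of $\sim$ strictly inside $\fg_j$ is then mapped by $\si^m$ to a leaf of $\sim$ strictly inside $\si^m(\fg_j)$; conversely, any periodic rotational gap or leaf produced inside $\si^m(\fg_j)$ pulls back along the inverse of this homeomorphism to a periodic gap or leaf of $\sim$ inside $\fg_j$ whose circular-order structure is the same, and which is therefore also rotational.

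For the critical case I would put $U:=U_1$ (the argument for $U_2$ being symmetric). Then $U$ is $\si^k$-fixed, $\si^k|_{U'}$ has degree two, and the standard semiconjugacy $\psi:=\psi_U:\bd(U)\to\uc$ satisfies $\psi\circ\si^k=\si_2\circ\psi$. The edge $M_1$ of $U$ is a leaf of $\sim$ (as an edge of the gap $G$ of $\sim$), while every other edge of $U$ is an iterated $\si^k$-pullback of $M_1$ into $\ol{H_G(M_1)}$; invariance of $\sim$ then forces each such edge to be either a leaf of $\sim$ or an edge of a $\sim$-gap, so no leaf of $\sim$ can cross it in $\disk$. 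In particular no leaf of $\sim$ crosses any major of $U$ with respect to $\si^k$ in $\disk$. Since $U$ is not a gap of $\sim$ by hypothesis, some leaf of $\sim$ lies strictly inside $U$, and Lemma~\ref{l:inv-subgap} then produces a non-empty $\si_2$-invariant geolamination $\lam'$ on $\uc$ as the $\psi$-image of the leaves of $\sim$ inside $U$.

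To finish I would apply a $\si_2$-analog of Lemma~\ref{l:pc0} to $\lam'$: a non-empty $\si_2$-invariant geolamination must contain a periodic rotational gap or leaf (a finite rotational set, the rotational leaf $\ol{\frac13\frac23}$, or an invariant Siegel gap), since the remaining classification alternatives would force $\lam'$ to be empty. The $\psi$-preimage of such a rotational object is then a rotational gap or leaf of $\sim$ strictly inside $U$, which together with the reduction step proves the main claim; the ``in particular'' part follows because, by Lemma~\ref{l:gapsuniq}, if $\sim\ne\sim_G$ then some canonical Fatou gap of $G$ must fail to be a gap of $\sim$. The main obstacle will be the justification of the $\si_2$-analog of Lemma~\ref{l:pc0} (or equivalently a fixed-point argument producing a rotational set from any nontrivial $\si_2$-invariant geolamination), together with carefully verifying that the pullback in the reduction step genuinely preserves non-vanishing rotation number.
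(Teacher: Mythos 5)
Your approach is genuinely different from the paper's. The paper observes (via Lemma~\ref{l:majordesc}) that a major $M_i$ of the type-D gap $G$ is automatically the major of an invariant quadratic gap $W$ of periodic type, so that the canonical Fatou gap $U$ attached at $M_i$ is the vassal $V(W)$; it then applies Lemma~\ref{l:spec-GvsL}, whose case analysis (via Theorem~\ref{t:fxpt} and Lemma~\ref{l:easytop}) already lives in the ambient $\si_3$ disk. You instead reduce to a critical canonical Fatou gap, project by $\psi_U$, and try to finish in the $\si_2$ model. This is a plausible alternative, but as written it has two gaps, one of which you flag and one of which you do not.

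First, to invoke Lemma~\ref{l:inv-subgap} you need that no leaf of $\sim$ crosses a major of $U$ with respect to $\si^k$, and in particular that $M''(U)$ (the edge with the same $\si^k$-image as $M_1$) is uncrossed. Your justification --- ``every other edge of $U$ is an iterated $\si^k$-pullback of $M_1$ ... invariance of $\sim$ then forces each such edge to be either a leaf of $\sim$ or an edge of a $\sim$-gap'' --- does not follow. Backward invariance of a lamination does not promote pullbacks of a leaf to leaves; the two endpoints of $M''(U)$ may well lie in different $\sim$-classes, and whether $M''(U)$ is a leaf, an edge of a gap, lies in the interior of a gap, or is crossed, is exactly the dichotomy that the paper's Lemma~\ref{l:spec-GvsL}(1) must work through (the hard case being that $M(U)$ and $M''(U)$ sit in a single periodic Fatou gap of the lamination). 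This point needs an actual argument, and once you supply one you will essentially have re-proved Lemma~\ref{l:spec-GvsL}.

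Second, as you acknowledge, the $\si_2$-analog of Lemma~\ref{l:pc0} is not established. Note that what Lemma~\ref{l:inv-subgap} produces is only an invariant \emph{geo-lamination} $\lam'$, not a priori a lamination generated by an equivalence relation, so Theorem~\ref{t:fxpt} (which is stated for laminations) is not directly available, and one must in addition rule out an invariant degree-2 Fatou gap of $\lam'$ with a critical (rather than periodic) major --- the only case in which the classification would fail to yield a rotational object. The paper does eventually show that a nonempty quadratic lamination has an invariant rotational gap or leaf (this is inside the proof of Proposition~\ref{p:coremin-qua}), but that statement appears later in Section 6, is for laminations rather than geo-laminations, and is itself asserted tersely; building your argument on it would be both anachronistic and still leave the geolamination issue open. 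In short, the reduction idea is attractive and the reduction to a critical $U_j$ is carried out correctly, but both the no-crossing claim and the $\si_2$-classification need substantive proofs before the argument closes; the paper's route through Lemmas~\ref{l:majordesc} and~\ref{l:spec-GvsL} packages that work once and for all.
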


\begin{proof}
Observe, that a major of a gap $G$ of type D satisfies the
conditions of Lemma~\ref{l:majordesc}. Therefore it can be viewed as
the major of some invariant quadratic gap $W$ of periodic type. It
follows that the canonical Fatou gap $U$ attached to $G$ coincides
with the vassal gap $V(W)$ of $W$. Hence the rest of the lemma
follows from Lemmas~\ref{l:spec-GvsL} and \ref{l:gapsuniq}.
\end{proof}

\subsection{Irrational invariant gaps}\label{s:irrgaps}

The description of irrational gaps is reminiscent of that of finite
laminational sets. In this subsection, we fix an irrational rotation
number $\tau$.

Let $G$ be a rotational invariant gap of rotation number $\tau$
(i.e., $G$ is a Siegel gap). Then $G$ may have one or two critical
majors of length $\frac13$, or one critical major of length
$\frac23$. It is also possible that $G$ has a non-critical major.
However, a non-critical major eventually maps to a critical major by
Lemma \ref{l:maj} (in this case, $G$ is of type C).
Thus an infinite rotational gap $G$ can have type A, C or D.

\subsection{Canonical laminations of irrational invariant gaps}\label{s:canlamirr}

We now construct the canonical lamination for a Siegel gap $G$.
Let $G$ be a gap of type D with critical edges $L$ and $M$.
Consider well-defined pullbacks of $G$ attached to $G$ at $L$ and $M$.
Then apply a standard pullback procedure \cite{thu85} to these gaps.
As holes in the union of bases of these gaps are shorter than
$\frac13$, the pullbacks of the gaps converge in diameter to $0$.
Alternatively, we can define $\sim_G$ as follows: two points $a$ and
$b$ on the unit circle are equivalent if there exists $N>0$ such
that $\si^N(a)$ and $\si^N(b)$ lie on the same edge of $G$, and the
chords $\ol{\si^i(a)\si^i(b)}$ are disjoint from $G$ for $i=0$, $\dots$, $N-1$.

Suppose now that $G$ is of type C. Let $M$ be the critical major of
$G$, and $Q$ be the critical quadrilateral (i.e. quadrilateral with
critical diagonals), one of whose edges, denoted by $R$, is the non-critical major of
$G$. We now define $\sim_G$ as follows: two points $a$ and $b$ on
the unit circle are equivalent if either $a$ and $b$ are endpoints
of an edge of $G$, 
or there exists $N>0$ such that $\si^N(a)$
and $\si^N(b)$ are endpoints of the same edge of $G$, 
and the chords $\ol{\si^i(a)\si^i(b)}$ are disjoint from $G\cup Q$
for $i=0$, $\dots$, $N-1$. It is easy to see that $\sim_G$ is a
lamination. Moreover, there exists a $\sim_G$-gap $T_G$ attached to
$G$ at $R$ which maps onto its image in a two-to-one fashion. The
rest of the lamination $\sim_G$ consists of concatenated to $G$ and
$T_G$ pullbacks of these gaps. Observe that $M$ and $T_G$ are the only
critical gaps (leaves) of $\sim_G$.

Consider now a Siegel gap $G$ of type A. Let $M$ denote its major.
By Lemma~\ref{l:fx-maj}, we have $0,\frac12\in H_G(M)$, and
therefore $G$ is located either completely above $\di$ (then
$G\subset\fg_a$), or completely below $\di$ (then $G\subset \fg_b$).
Thurston's pullback construction applies to this case as well.
Define $Q$ as the critical equilateral triangle, one of whose edges
is $M$. We now define $\sim_G$ as follows: two points $a$ and $b$ on
the unit circle are equivalent if either $a$ and $b$ are endpoints
of an edge of $G$, or there exists $N>0$ such that $\si^N(a)$ and
$\si^N(b)$ lie on the same edge of $G$, 
and the chords
$\ol{\si^i(a)\si^i(b)}$ are disjoint from $G\cup Q$ for $i=0$,
$\dots$, $N-1$.
It is easy to see that $\sim_G$ is a
lamination. Moreover, there exists a $\sim_G$-gap $T_G$ attached to
$G$ at $M$ which maps onto its image in a two-to-one fashion. The
rest of the lamination $\sim_G$ consists of concatenated to $G$ and
$T$ pullbacks of these gaps. Observe that $M$ and $T_G$ are the only
critical gaps (leaves) of $\sim_G$.

\begin{lem}
 \label{l:type1Sie} Suppose that $G$ is a stand alone type Siegel
gap and $\sim$ is an invariant cubic lamination with gap $G$. Suppose
that one of the following holds:

\begin{enumerate}

\item $G$ is of type D;

\item $G$ is of type $A$ or $C$ and $T_G$ is a gap of $\sim_G$.

\end{enumerate}

Then $\sim$ coincides with $\sim_G$.
\end{lem}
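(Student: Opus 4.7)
The plan is to apply a pullback uniqueness argument: I will show that $\sim$ and $\sim_G$ share the same collection of critical leaves and critical gaps and contain the same distinguished periodic gaps, which by the Thurston pullback construction defining $\sim_G$ forces equality $\sim = \sim_G$. Cases (1) and (2) are parallel and differ only in what the critical seeds are.

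First, I would check that the critical objects of $\sim_G$ are also present in $\sim$ and that they exhaust the criticality of $\sim$. Every cubic invariant lamination has total criticality $d-1 = 2$, where each critical leaf of $\lam_\sim$ counts as $1$ and each critical gap of degree $k$ counts as $k-1$. In case (1), the two critical majors $L$ and $M$ of $G$ are edges of $G$ and hence leaves of $\sim$, contributing $1+1=2$. In case (2), the critical major $M$ of $G$ is a leaf of $\sim$, and the degree-$2$ gap $T_G$ is a gap of $\sim$ by hypothesis; together these contribute $1+1=2$. In either case the critical objects of $\sim_G$ saturate the criticality of $\sim$, so $\sim$ admits no further critical leaves or critical gaps. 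In type C one also notes that the critical quadrilateral $Q$ used in the construction of $\sim_G$ is contained in the gap $T_G$ of $\sim$, so no leaf of $\sim$ can cross the critical diagonals of $Q$.

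Next I would establish $\sim_G \subseteq \sim$ by induction on the pullback depth of leaves in $\sim_G$. The base case is that the edges of $G$, and in case (2) also the edges of $T_G$, are leaves of $\sim$. For the inductive step, let $\ell \in \sim_G$ have depth $n$; then $\si(\ell) \in \sim_G$ has depth $n-1$ and therefore lies in $\sim$ by induction. By backward invariance (D2) of $\sim$, the preimage of the class of $\si(\ell)$ splits into at most $d=3$ classes of $\sim$; the non-crossing condition (E2) of $\sim$, together with the fact that $G$ and (in case (2)) $T_G$ are gaps of $\sim$, pins down $\ell$ as the unique preimage of $\si(\ell)$ that lies in the component of $\ol{\disk} \setminus (G \cup T_G)$ dictated by the canonical procedure, so $\ell \in \sim$.

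For the reverse inclusion $\sim \subseteq \sim_G$, I would argue by contradiction. Suppose $\ell$ is a leaf of $\sim$ not contained in $\sim_G$; then the forward orbit of $\ell$ never reaches an edge of $G$ or, in case (2), of $T_G$, and since these are gaps of $\sim$, all iterates of $\ell$ lie in $\ol{\disk} \setminus (G \cup T_G)$. Viewing $\ell$ as an edge of a laminational set of $\sim$ and applying Lemma \ref{l:maj}(3), some iterate $\si^n(\ell)$ must be a major of a laminational set of $\sim$. Majors in a cubic lamination are tied to critical or periodic dynamics, so by the criticality count above the only periodic laminational sets of $\sim$ whose majors can absorb $\si^n(\ell)$ are $G$ and $T_G$, forcing $\si^n(\ell)$ to be an edge of $G$ or of $T_G$, a contradiction. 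The main technical obstacle is this last step: rigorously excluding wandering leaves of $\sim$ that escape the pullback tree of $\sim_G$ without ever being absorbed into $G$ or $T_G$. This exclusion rests on the expansion of $\si$ on any arc longer than $\tfrac{1}{3}$ combined with the criticality bound, which prevents new critical leaves or gaps from arising in $\sim$ outside those already accounted for by $\sim_G$.
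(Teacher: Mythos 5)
Your proposal is in the same spirit as the paper's intended argument: the paper's own proof is a single remark that the claim "is almost verbatim like that of Lemma~\ref{l:canlam1} and is essentially based on Thurston's pullback construction of laminations in the case when given leaves and gaps exhaust all possible criticality," which is exactly your framing (criticality exhaustion plus pullback uniqueness). Your criticality count is right: in case (1) the two critical edges of $G$ account for both units of cubic criticality, and in case (2) the critical edge of $G$ plus the degree-two gap $T_G$ do the same. Your $\sim_G \subseteq \sim$ direction by induction on pullback depth is sound.

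The weak point is the reverse inclusion $\sim \subseteq \sim_G$. You invoke Lemma~\ref{l:maj}(3) by ``viewing $\ell$ as an edge of a laminational set of $\sim$,'' but not every leaf of $\lam_\sim$ is an edge of a gap (a non-isolated leaf is a two-sided limit of other leaves), so that lemma does not literally apply; and the assertion that ``majors in a cubic lamination are tied to critical or periodic dynamics, so the only periodic laminational sets of $\sim$ whose majors can absorb $\si^n(\ell)$ are $G$ and $T_G$'' is not justified — a major of a gap $H$ of $\sim$ is only constrained (via Lemma~\ref{l:ficri}) to have a critical set behind its hole, which does not immediately force $\si^n(\ell)$ to be an edge of $G$ or $T_G$. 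You flag this yourself as ``the main technical obstacle,'' but the expansion-plus-criticality heuristic you offer does not close it. The route the paper actually has in mind, following the proof of Lemma~\ref{l:canlam1}, is simpler: once you know $\sim_G \subseteq \sim$, any leaf $\ell$ of $\sim$ not in $\sim_G$ cannot cross a leaf of $\sim_G$ and hence lies strictly inside a gap of $\sim_G$; every gap of $\sim_G$ is a pullback of $G$ (or of $T_G$ in case (2)), so some iterate $\si^n(\ell)$ would be a chord strictly inside $G$ (resp.\ $T_G$), contradicting the hypothesis that $G$ (resp.\ $T_G$) is a gap of $\sim$. Replacing your Lemma~\ref{l:maj}(3) step with this localization-in-a-pullback argument would make the proof complete and match the paper.
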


The proof is almost verbatim like that of Lemma \ref{l:canlam1} and
is essentially based on Thurston's pullback construction of
laminations in the case when given leaves and gaps exhaust all
possible criticality.

\section{Cubic laminations with at most one periodic rotational set}

One of the aims of this paper is to describe cubic laminations
$\sim$ with the simplest non-trivial structure. As such, we consider
cubic laminations with \emph{at most} one periodic rotational gap or
leaf, satisfying one extra condition.
Namely, by
Theorem~\ref{t:corexpli} if $\sim$ has finitely many periodic
rotational sets, then the unique super-gap of $\sim$ is the entire disk.
However, among laminations with this property we can still
distinguish between the ones with more complicated dynamics and the
ones with less complicated dynamics. As a measure of that, we choose
the number of times we need to remove isolated leaves from the
lamination so that we remove all rotational objects of the
lamination.
Thus, \emph{we describe the family $\smp$ of all cubic
laminations $\sim$ with exactly one periodic rotational gap or leaf such that
all edges of this periodic rotational set are isolated}.
Equivalently, $\sim\in\smp$ if and only if $\sim$ has exactly
one rotational gap or leaf, and the first cleaning $\lam^1_\sim$ of $\lam_\sim$
has no periodic rotational gaps or leaves.

Note that if $\sim$ has a unique rotational gap or leaf then this
gap or leaf is
invariant. Since the dynamical core $\ce_{rot}$ is spanned by
periodic rotational gaps and leaves, such laminations have the
simplest dynamical core $\ce_{rot}$: a single fixed point, or
the boundary of a fixed Siegel disk or the empty set.
Observe that if $\sim$ contains a Siegel gap, then all of its boundary edges are
isolated.  To see this, note that any critical edge $\ell$ of the
Siegel gap $G$ must be isolated (a gap with image $\si(G)$ must be
on the other side of $\ell$).  Since all other edges are
preimages of critical leaves, the claim follows.
We start by
describing \emph{quadratic} laminations with the property that there is at
most one rotational set.

If $G$ is a finite $\si_2$-invariant rotational gap or leaf,
then one can define the {\em canonical lamination} of $G$
as the only quadratic invariant lamination with a
cycle of Fatou gaps attached to edges of $G$
(it represents a parabolic quadratic polynomial $z^2+c$, whose
parameter $c$ is a point on the main cardioid).
Similarly, if $G$ is a stand alone invariant Siegel gap with
respect to $\si_2$, then one can define the canonical lamination
of $G$ as the unique quadratic invariant lamination which has $G$
as a gap.

\begin{prop}\label{p:coremin-qua}
A non-empty \textbf{quadratic} lamination $\sim$ with at most one
periodic rotational gap or leaf is the canonical lamination of an
invariant rotational gap or leaf.
\end{prop}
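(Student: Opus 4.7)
The plan is to produce an invariant rotational gap or leaf $G_2$ of $\sim$ and then argue that $\sim$ must coincide with the canonical lamination $\sim_{G_2}$. First, apply Theorem~\ref{t:fxpt} to $f_\sim\colon K_\sim\to K_\sim$ (taking $m=0$, so that $X=K_\sim$): one of the three alternatives holds, an invariant Fatou domain of degree $>1$, an invariant Siegel domain, or an $f_\sim$-fixed point of non-zero rotation number. For a $\si_2$-invariant lamination, however, an invariant Fatou gap of degree $>1$ must have basis equal to all of $\uc$ and so coincides with $\ol{\disk}$, forcing $\sim$ to be empty. Hence the non-emptiness of $\sim$ yields either an invariant Siegel gap $G_2$ or a non-degenerate $\si_2$-invariant finite rotational gap or leaf $G_2$.

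If $G_2$ is a Siegel gap, its unique major is the critical diameter, every other edge of $G_2$ is a uniquely determined iterated $\si_2$-preimage of that major, and no leaves of an invariant lamination can lie strictly inside a Siegel gap. A Thurston-style pullback argument, in the spirit of Lemma~\ref{l:canlam1}, then shows that $\sim=\sim_{G_2}$.

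Suppose instead that $G_2$ is finite with vertices periodic of period $r$, and let $M_2$ be its unique major. Let $V_2$ be the canonical Fatou gap attached to $G_2$ at $M_2$; this is a stand alone Fatou gap of degree $2$ and period $r$ for $\si_2$, whose two majors with respect to the first-return map $\si_2^r$ are $M_2$ itself and a sibling edge $M_2''$ satisfying $\si_2(M_2'')=\si_2(M_2)$. If $V_2$ is already a gap of $\sim$, then every remaining leaf of $\sim_{G_2}$ is a uniquely determined iterated $\si_2$-preimage of an edge of $G_2$ or $V_2$ in the complement of the orbit of $V_2$, and a Thurston-style pullback argument forces $\sim=\sim_{G_2}$.

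The main step is to rule out the possibility that $V_2$ is not a gap of $\sim$. I would verify the hypothesis of Lemma~\ref{l:inv-subgap} for $V_2$: no $\sim$-leaf crosses $M_2$ in $\disk$ because $M_2$ is itself a leaf of $\sim$ (axiom~(E2)), and no $\sim$-leaf crosses $M_2''$ in $\disk$ because, under $\si_2^r$ combined with the semiconjugacy $\psi_{V_2}$ to $\si_2$ on $\uc$, such a crossing would propagate to a transverse crossing of the $\sim$-leaf $\si_2(M_2)=\si_2(M_2'')$, again violating axiom~(E2). Lemma~\ref{l:inv-subgap} then yields a non-empty $\si_2$-invariant geometric lamination $\approx=\psi_{V_2}(\lam_\sim)$. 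Applying the quadratic analog of Theorem~\ref{t:fxpt} to $\approx$ produces an invariant rotational gap or leaf $T\subset\uc$, whose $\psi_{V_2}$-preimage is a periodic rotational gap or leaf of $\sim$ strictly inside $V_2$ and hence distinct from $G_2$, contradicting the hypothesis that $\sim$ has at most one periodic rotational gap or leaf. The principal obstacle I anticipate is the careful geometric verification that no $\sim$-leaf crosses $M_2''$ in $\disk$, which, as in the cubic analog Lemma~\ref{l:spec-GvsL}, requires tracking orbits across the two sheets of the degree-$2$ cover $\si_2^r|_{V_2}$.
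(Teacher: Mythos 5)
Your overall strategy tracks the paper's: use Theorem~\ref{t:fxpt} to produce the invariant rotational gap or leaf $G_2$, handle the Siegel case by uniqueness of pullbacks, and reduce the finite case to showing that $V_2$ is a gap of $\sim$. In the finite case, however, you and the paper diverge. You want to invoke Lemma~\ref{l:inv-subgap} for $V_2$, obtain a $\si_2$-invariant geo-lamination $\psi_{V_2}(\lam_\sim)$, and derive a contradiction from a rotational object in the quotient. The paper instead applies Theorem~\ref{t:fxpt} (for the first-return map $\si_2^r$) directly in the hole of $G_2$ behind $M_2$: the Siegel and rotational-fixed-point alternatives are ruled out by uniqueness of $G_2$, so that hole contains a $\si_2^r$-periodic Fatou gap $U$ of degree $>1$; the period of $U$ is exactly $r$ since $U$ must visit each hole of $G_2$ before returning, whence $U'\subset V_2'$; density of $\si_2^r$-pullbacks of a point of $U'$ in $V_2'$ then forces $U=V_2$, so $\sim$ is the canonical lamination. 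The paper never needs Lemma~\ref{l:inv-subgap} at this point.

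The genuine gap in your version is precisely the step you flag as your principal obstacle: verifying that no $\sim$-leaf crosses $M_2''$. The ``propagation'' argument you sketch does not work. One of the two arcs cut off by $M_2''$ has length greater than $\frac12$, so $\si_2$ (and likewise $\si_2^r$) wraps around it; the image of a chord linked with $M_2''$ need not be linked with $\si_2(M_2)=\si_2(M_2'')$, since both image endpoints can land on the same side of $\si_2(M_2)$. So the step as written would fail. What you are missing is in fact simpler than the argument you attempted: $M_2''$ \emph{is already a leaf of} $\sim$. By backward invariance (D2), $\si_2^{-1}(G_2)$ splits into exactly two $\sim$-classes, namely $G_2$ itself and the sibling class $G_2''=G_2+\frac12$ (these cannot merge, since $G_2$ lies in an arc of length $<\frac12$ by the existence of its major), and $M_2''$ is the edge of $\ch(G_2'')$ sharing its image with $M_2$. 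Hence no leaf of $\sim$ crosses $M_2''$ by axiom (E2). With that substitution your route can be made to work, but as proposed it has a hole.
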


\begin{proof}
It is easy to see that by Theorem~\ref{t:fxpt}, $\sim$ has an invariant rotational gap or leaf $G_2$.
If $G_2$ is a Siegel gap, then $\sim$ coincides with the
canonical lamination of $G_2$ because all pullbacks of $G_2$ are
uniquely defined and no Siegel gap can contain a leaf in its
interior (this is similar to Lemma \ref{l:canlam1}).
Suppose that $G_2$ is a finite gap or leaf whose vertices have period $r$.
It is well-known that $G_2$ has a unique major $M_2$ which is the edge of $G_2$ separating
the rest of $G_2$ from $0$. By \cite{kiwi02}
there is only one cycle of edges of $G_2$.
Let $V_2$ be the Fatou gap of the canonical lamination of $G_2$
which has $M_2$ as one of its edges.
Denote by $M''_2$ the edge of $V_2$ which has the same $\si_2$-image as $M_2$.
By Theorem~\ref{t:fxpt} and because $\sim$ has at most one
periodic rotational gap or leaf, the strip between $M_2$ and $M''_2$ contains
a $\si^r_2$-invariant Fatou gap $U$ of $\sim$
of degree greater than 1.
In fact, $r$ is the period of $U$ as $U$ must
be mapped under every hole of $G_2$ before returning.
Hence $U\subset V_2$ and since appropriately chosen $\si^r_2$-pullbacks of any
point of $U'$ are dense in $V'_2$, we conclude that
$U=V_2$, and $\sim$ is the canonical lamination of $G_2$.
\end{proof}

Let us go back to studying cubic laminations (recall that $\si_3$ is denoted by $\si$).
Let $\sim$ be a cubic invariant lamination.
If a lamination $\sim$ belongs to $\smp$, then it has at most one rotational set.
As explained in the beginning of this section, then
by Theorem~\ref{t:corexpli} the unique super-gap of $\sim$ is the whole $\ol{\disk}$.
Hence $\sim$ must have either a periodic Fatou
gap or a periodic Siegel gap (otherwise there are no super-gaps of $\sim$ at all).
Moreover, all edges of this periodic Fatou gap are isolated in $\lam_\sim$.

\begin{lem}\label{l:lamcoex}
Suppose that $\sim$ is a lamination which co-exists with two
disjoint critical chords, $c$ and $d$. Moreover, suppose that $c$
has non-periodic endpoints, no leaf of $\sim$ contains an
endpoint of $d$, and the leaf $d$ intersects no edge of $U(c)$ in $\disk$.
Then $\sim$ co-exists with the gap $U(c)$.
\end{lem}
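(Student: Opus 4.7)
The plan is to argue by contradiction. Assume some $\sim$-leaf $\ell$ crosses an edge $e$ of $U(c)$ in $\disk$. The key tool is that the two disjoint critical chords $c$ and $d$ form a critical portrait: they partition $\ol{\disk}$ into three closed regions $R_1,R_2,R_3$, and on each arc-component of $R_i\cap\uc$ (which has length at most $\tfrac13$) the map $\si$ is an orientation-preserving bijection onto $\uc$. Consequently, if two chords $X$ and $Y$ have all four endpoints in a single arc-component of some $R_i\cap\uc$ and cross in $\disk$, then $\si(X)$ and $\si(Y)$ also cross in $\disk$.

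I would first verify that $e$ is disjoint in $\disk$ from both $c$ and $d$. Disjointness from $d$ is the hypothesis. For $c$: since $c$ has non-periodic endpoints, the extended caterpillar case is excluded, leaving the regular critical and periodic type cases. In the regular critical case $c=M(U(c))$ is itself an edge of $U(c)$, and distinct edges of a gap never cross in $\disk$. In the periodic type case, neither endpoint of $c$ belongs to $\Pi(c)$ and the arc between them is disjoint from $\ol{L(c)}\supset\Pi(c)$; thus both endpoints of $c$ lie in a common hole of $U(c)$ of length strictly greater than $\tfrac13$, which by Lemma~\ref{bndcrit} must be the major hole at $M(U(c))$. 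Hence $c$ lies in the cap at $M(U(c))$ and is disjoint in $\disk$ from every edge of $U(c)$.

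Since $\sim$ co-exists with $c$ and $d$, both $\ell$ and $e$ lie in some closed region, and their crossing in $\disk$ forces them to lie in the same region; a short check using that neither crosses $c$ or $d$ shows the four endpoints lie in a single arc-component of $R_i\cap\uc$. By induction on $n$, $\si^n(\ell)$ is a $\sim$-leaf (invariance of $\sim$), $\si^n(e)$ is an edge of $U(c)$ (Lemma~\ref{descru} and invariance of $U(c)$), the two cross in $\disk$, and they still sit in a single arc-component of some region. In the regular critical case, edges of $U(c)$ are iterated preimages of $c$, so $\si^n(e)=c$ for some $n$; then $\si^n(\ell)$ crosses $c$, contradicting the hypothesis that $\sim$ co-exists with $c$.

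The hard part is the periodic type case, in which iterates of $e$ enter only the periodic orbit of $M(U(c))$ and never equal $c$. I would resolve this by exploiting the expansion of $\si$: the requirement that \emph{every} further iterate $\si^{n+m}(\ell)$ again avoid $c$ and $d$ translates into a nested family of open arcs in which the endpoints of $\si^n(\ell)$ must lie, whose lengths contract by a factor of $3$ at each step, and whose common intersection is just the pair of repelling periodic endpoints of an appropriate iterate of $M(U(c))$. The auxiliary hypothesis that no leaf of $\sim$ contains an endpoint of $d$ keeps these nested arcs open, so $\ell$ would be forced to be degenerate, completing the desired contradiction.
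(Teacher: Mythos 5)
Your proposal follows the same core strategy as the paper: assume a $\sim$-leaf $\ell$ crosses an edge $e$ of $U(c)$ in $\disk$, show by induction that $\si^n(\ell)$ crosses $\si^n(e)$ for all $n$, and derive a contradiction. Your preliminary verification that $e$ is disjoint in $\disk$ from both $c$ and $d$, and your handling of the degenerate cases using the hypothesis on $d$ and the non-periodicity of $c$'s endpoints, are both sound and essentially match the paper's step-by-step treatment. The regular-critical conclusion ($\si^n(e)=c$ for some $n$, so $\si^n(\ell)$ crosses $c$) is also correct.

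The gap is in the periodic-type case, where your sketch does not work as stated. You claim that the constraint ``$\si^{n+m}(\ell)$ must avoid $c$ and $d$ for all $m$'' produces nested arcs whose intersection is a pair of periodic points, forcing $\ell$ to be degenerate. Two problems: first, the middle component of $\ol{\disk}\sm(c\cup d)$ meets $\uc$ in \emph{two} disjoint arcs, so requiring the endpoints of $\si^n(\ell)$ to stay in the same region does not pin them to a shrinking interval with a one-point intersection; second, even if the endpoints were forced toward the periodic endpoints of some iterate of $M(U(c))$, that produces a nondegenerate periodic leaf, not a contradiction. The paper's argument instead closes both cases uniformly and more directly: since every $\si^n(\ell)$ crosses an edge of $U(c)$ (which lies in $\ch(\Pi(c))$, hence on the $\ol{L(c)}$ side of $c$) and never crosses $c$, both endpoints of $\si^n(\ell)$ stay in $\ol{L(c)}$ for all $n$; thus both endpoints of $\ell$ lie in $\Pi(c)=U'(c)$, and a chord with both endpoints in $U'(c)$ cannot cross an edge of $U(c)$. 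This is the missing step; your sketch for the periodic-type case should be replaced by this observation (Lemma~\ref{descru} and the definition of $\Pi(c)$).
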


\begin{proof}
We need to show that leaves of $\sim$ do not intersect the edges of
$U(c)=U$ in $\disk$.
Indeed, suppose that a leaf $\ell$ of $\sim$ intersects
an edge $\bj$ of $U$ in $\disk$.
Let us show that then $\si(\ell)$ intersects $\si(\bj)$ in $\disk$.
Indeed, there exists a component $Y_0$ of
$\ol{\disk}\sm (c\cup d)$ whose closure contains both $\ell$ and $\bj$.
The only case when $\si(\ell)$ and $\si(\bj)$ do not intersect in $\disk$
under the circumstances is as follows: $\ell$ and $\bj$
contain distinct endpoints of one of the critical chords, $c$ or $d$.
Let us show that this is impossible.

Indeed, by the assumptions on $d$ the leaf $\ell$ does not contain
an endpoint of $d$.
Suppose now that both $\ell$ and $\bj$ contain an endpoint of $c$.
Since $c\in U(c)$ and $c$ has no periodic endpoints, $c$ is regular critical.
Hence the only edge of $U(c)$ which contains an endpoint of $c$ is $c$
itself and $\ol{j}=c$.
But then $c$ and $\ol{j}$ do not intersect in $\disk$ by assumption.
Thus $\si(\ell)$ and $\si(\ol{j})$ intersect in $\disk$.
By induction and by Subsection~\ref{s:invquagap}, this implies that the
endpoints of $\ell$ belong to $U'(c)$.
Hence, $\ell$ cannot intersect $\bj$ in $\disk$, a contradiction.
\end{proof}

\begin{thm}
\label{t:cormin-spec} Let $\sim$ be a non-empty cubic invariant
lamination from $\smp$. Then one of the following cases occurs:
\begin{enumerate}
\item the lamination $\sim$ is the canonical lamination of some quadratic
invariant gap;
\item the lamination $\sim$ is the canonical lamination of some rotational
gap $G$ of type D;
\item the lamination $\sim$ co-exists with the canonical lamination of an
invariant quadratic gap $U$ and contains an invariant rotational gap
or leaf in $U$.
\end{enumerate}
\end{thm}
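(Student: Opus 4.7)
The plan is to split into cases according to the existence and type of an invariant rotational gap or leaf in $\sim$.

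If $\sim$ has no periodic rotational gap or leaf, then Lemma~\ref{l:pc0} immediately yields case (1): $\sim$ coincides with the canonical lamination of some invariant quadratic gap. Otherwise, by the defining property of $\smp$, there is a unique periodic rotational gap or leaf $G$ of $\sim$, and $G$ is automatically invariant (being the unique such object and therefore a fixed point of the induced action on the set of rotational gaps/leaves). Following the classification in Sections~\ref{s:finrotset} and~\ref{s:irrgaps}, $G$ is of type A, B (finite only), C (Siegel only), or D.

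If $G$ is of type D (finite or Siegel), I would combine the uniqueness of $G$ with Lemma~\ref{l:spec-LvsG1} (finite case) or Lemma~\ref{l:type1Sie}(1) (Siegel case): any canonical Fatou gap of $G$ that failed to be a gap of $\sim$ would force an additional rotational gap inside it, contradicting uniqueness. Hence all canonical Fatou gaps of $G$ are gaps of $\sim$, and Lemma~\ref{l:gapsuniq} (or the analogous direct argument for Siegel gaps of type D) gives $\sim = \sim_G$, which is case (2).

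If $G$ is not of type D, I would construct an invariant quadratic gap $U$ containing $G$ and then use co-existence to place $\sim$ into case (3). For $G$ of type A (finite or Siegel), Lemma~\ref{l:fx-maj} forces both $\si$-fixed points $0$ and $\frac12$ into the single major hole $H_G(M)$, so $G'$ lies entirely above or entirely below $\di$, and $G \subset \fg_a$ or $G \subset \fg_b$; take $U$ to be this $\si$-invariant quadratic gap. For $G$ of type B (finite) or type C (Siegel), where the two majors of $G$ share a common orbit, I would construct $U$ via Lemma~\ref{pic} by choosing a critical chord $c$ with carefully prescribed endpoint constraints, so that $G$ sits inside $U = G(c)$. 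In each sub-case, Lemma~\ref{l:lamcoex}, combined with Lemma~\ref{l:canlam1s} when $U$ is of regular critical type, guarantees that $\sim$ co-exists with the canonical lamination $\sim_U$, yielding case (3).

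The main obstacle is the construction of $U$ in the type B (finite) and type C (Siegel) cases, where both majors of $G$ belong to one orbit. Here the quadratic direction of $G$ is not immediately exhibited by a fixed-point containment (as in type A), so the critical chord generating $U$ must be chosen so that $G \subset U$ and so that no leaf of $\sim$ is crossed by an edge of $U$. Verifying this co-existence will rely essentially on the uniqueness of the rotational set $G$ within $\sim$ and on the fact that all edges of $G$ are isolated in $\lam_\sim$, so that the ``extra'' structure of $\sim$ inside the candidate $U$ reduces, under the semiconjugacy $\psi_U$ of Lemma~\ref{l:inv-subgap}, to a quadratic invariant lamination that can only carry $G$ as its rotational piece.
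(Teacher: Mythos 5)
Your overall plan mirrors the paper's: Lemma~\ref{l:pc0} handles the case of no rotational set (case (1)), and Lemmas~\ref{l:spec-LvsG1} / \ref{l:type1Sie} handle type D (case (2)). The problem is the type A sub-case.

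You propose to take $U=\fg_a$ (or $\fg_b$) and invoke Lemma~\ref{l:lamcoex} together with Lemma~\ref{l:canlam1s}/\ref{l:canlam2} to get co-existence with $\sim_U$. This does not work as stated. First, $\fg_a$ is an invariant quadratic gap of \emph{periodic} type (its major is $\di$, which has period $1$), so Lemma~\ref{l:canlam1s} is inapplicable and one would instead need Lemma~\ref{l:canlam2}, which requires co-existence with both $\fg_a$ and its vassal. Second, and more seriously, Lemma~\ref{l:lamcoex} requires a critical chord $c$ with non-periodic endpoints such that $U=U(c)$; the obvious chord that would yield $\fg_a$ is $\di$ itself, which has \emph{fixed} endpoints, so the hypotheses fail, and you have given no reason to believe $\sim$ possesses a critical chord in the lower half-circle generating $\fg_a$. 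Indeed the ``other'' critical set $C$ of $\sim$ (whose existence the paper deduces after showing the Fatou gap $V$ attached to $M(G)$ may be assumed quadratic) is a priori located elsewhere, so that $U(c)\ne\fg_a$ in general. Put differently: you never verify that no leaf of $\sim$ crosses an edge of $\fg_a$, and without that, the type A case is unproved.

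The paper's route is genuinely different here: it does not single out $\fg_a$ or $\fg_b$. Instead, using the defining property of $\smp$ (all edges of $G$ are isolated in $\lam_\sim$, hence there is a cycle $\F$ of Fatou gaps attached to $G$), it argues that the Fatou gap $V$ attached to the major of $G$ may be assumed quadratic (otherwise $\sim=\sim_G$), so there is a \emph{second} critical $\sim$-set $C$. It then picks a critical chord $c$ in $C$ with non-periodic endpoints and a critical chord $d$ in $V$ missing all leaf-endpoints, checks that these satisfy Lemma~\ref{l:lamcoex}, and concludes that $\sim$ co-exists with $U(c)$ (whatever that gap turns out to be), with $G\subset U(c)$; the same construction handles types A and B uniformly, and an analogous choice (with $d$ a critical edge of $G$) handles the Siegel cases. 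You do sketch this construction correctly for types B and C; the gap is that you replaced it, in type A, with the unjustified choice $U=\fg_a$. A secondary omission: you do not address the degenerate sub-cases the paper flags (type B with two quadratic attached Fatou gaps, or type A with a cubic attached Fatou gap), where $\sim=\sim_G$ and there is no second critical set $C$ to draw $c$ from.
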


\begin{proof}
By Lemma~\ref{l:pc0} if $\sim$ has no rotational gap or leaf, then
(1) holds. Assume that $\sim$ has a rotational gap or leaf $G$.  By
Lemmas ~\ref{l:type1Sie} and ~\ref{l:spec-LvsG1}, if $G$ of type D,
then (2) holds.  We show that in all remaining cases we can choose
critical chords $c$ and $d$ as in Lemma~\ref{l:lamcoex}.

First consider the case when $G$ is a finite rotational gap of type A or B.
In either case, by the assumption that edges of $G$ are isolated,
there exists a cycle $\F$ of Fatou gaps attached to $G$.

Suppose that $G$ is of type B and the period of each vertex of
$G$ is $s$. Consider a few cases. It can happen that $\F$ has two
gaps, $V$ and $W$, on each of which the map $\si$ is two-to-one.
Since all gaps of $\F$ are attached to $G$, this implies that $\sim$
coincides with the canonical lamination $\sim_G$ of $G$. Indeed,
take a point $x$ in the basis of $V$ and pull it back under $\si$ following
the gaps from $\F$.
Then on each step all pullbacks of $x$ taken in the
sense $\F$ coincide with the pullbacks of $x$ taken in the sense of
$\sim_G$.
Since these pullbacks of $x$ are dense in bases of gaps in $\F$, we
obtain the desired.

In the type A case there is only one gap $V$ of $\F$ which does not
map forward one-to-one; $V$ is attached to the unique major edge of
$G$.
We may assume that $V$ is quadratic (similar to the argument above if
its remap were three-to-one, $V$ would have to coincide with the
cubic gap of $\sim_G$).

Therefore in either the type A or type B case, there is a
quadratic Fatou gap $V$ attached to some edge of $G$ and another
critical $\sim$-set $C$.  Let $c$ be a critical chord of $C$.
If by accident $c$ was chosen so that one of its endpoints
is periodic, then it follows that $C$ is a Fatou gap and $c$ can be
replaced by another critical chord inside $C$ with non-periodic
endpoints.  Note that bases of $G$ and the gaps of $\F$ are points
of $U'(c)$.  We may take $d$ to be a critical chord of $V$ whose
endpoints are not the endpoints of any leaf of $\sim$ (the basis of
$V$ is a Cantor set, so we can choose $d$ satisfying this property).
Clearly $c$ and $d$ satisfy the conditions of Lemma
~\ref{l:lamcoex}, so $\sim$ coexists with $U(c)$.
Furthermore $G\subset U(c)$.
Then by Lemma~\ref{l:spec-GvsL}, $\sim$ must coexist with $\sim_G$.
So then (3) holds.

A similar argument holds in the case that $G$ is a Siegel gap.
Let $d$ be a critical edge of $G$.
There is some other critical $\sim$-set $C$.
Let $c$ be a critical chord in $C$.  As before, $c$
may be chosen to be non-periodic and $G'\subset U'(c)$.  Since no
leaves other than $d$ of $\sim$ intersect $d$, then $c$ and $d$
satisfy the conditions of Lemma~\ref{l:lamcoex}, so that $\sim$
coexists with $U(c)$ and $G\subset U(c)$.  Then by
Lemma~\ref{l:spec-GvsL}, $\sim$ coexists with $\sim_G$ as desired.
\end{proof}

\begin{prop}\label{l:percoex}
Let $\sim$ be a lamination in $\smp$ with rotational gap $G$ which
co-exists with a quadratic invariant gap $U$.
Moreover, suppose $G\subset U$.
Then either $\sim=\sim_G$ or $M=M(U)$ is a leaf of $\sim$.
\end{prop}

\begin{proof}
By Lemma~\ref{l:spec-GvsL}, if $G$ is of type $D$ (whether $G$ is finite
or infinite) we have that $\sim=\sim_G$.  So we may assume that $G$
has a single orbit of edges.
Suppose that $\sim\ne\sim_G$, and $M$ is not a leaf of $\sim$.
Then $M$ lies in a periodic gap $H$ of $\sim$.  Let $M''$ be the chord of $V(U)$
with the same image as $M$.  Then by Lemma~\ref{l:spec-GvsL}, $M''$ is
also a chord which lies in $H$.  This immediately implies that $H$ is
a Fatou gap as laminations do not have finite critical gaps and Siegel gaps
have no periodic vertices.  We next want to establish that $H$ is not
quadratic. Otherwise, let $c$ be a critical chord joining an
endpoint of $M$ to an endpoint of $M''$.
Then $c_2=\psi_H(c)$ is a critical chord in $\uc$ and $\psi_H$-images of the
endpoints of $M$ are distinct points of the same period whose $\si_2$
orbits always stay on the same side of $c_2$, a situation which is
well-known to be impossible.

The result now follows easily. If $G$ is a finite gap, then $G$ has
a cycle $\F$ of Fatou gaps attached to its edges.  These must be
quadratic; otherwise $\sim=\sim_G$ (cf. the proof of Theorem
\ref{t:cormin-spec}). Then $H$ is also forced to be quadratic, a
contradiction with the first paragraph of the proof.  If $G$ is
Siegel, then one criticality is occupied by the critical edge of
$G$.  Once more, this forces $H$ to be quadratic which impossible by
the above.
\end{proof}

\bibliographystyle{amsalpha}

\end{document}